%%%%%%%%%%%%%%%%%%%%%%%%%%%%%%%%%%%%%%%%%%%%%%%%%%%%%%%%%%%%%%%%%%%%%%
%% Please examine instructions for authors before writitng your     %%
%% manuscript (instruction.pdf). Not only can one find there        %%
%% provided commands descriptions, but also requirements for the    %%
%% manuscript preparation.                                          %%
%% Please submit any bugs and comments to:                          %%
%%  Marek.Czystolowski@math.uni.wroc.pl                             %%
%%%%%%%%%%%%%%%%%%%%%%%%%%%%%%%%%%%%%%%%%%%%%%%%%%%%%%%%%%%%%%%%%%%%%%
%\documentclass[submission]{pmsart}[2006/01/13]
\documentclass[12pt]{article}

%%%%%%%%%%%%%%%%%%%%%%%%%%%%%%%%%%%%%%%%%%%%%%%%%%%%%%%%%%%%%%%%%%%%%%%
%% Insert here additional packages, your definitions and macros,     %%
%% but make sure you don't redefine commands and environments.       %%
%% We've already inserted packages: amsthm, amssymb, graphicx,       %%
%% natbib, mathptm, geometry, soul                                   %%

\usepackage{amsmath,amstext,amsopn,color,amsthm, amssymb}
\usepackage{ushort}
\usepackage{bbm}
%%%%%%%%%%%%%%%%%%%%%%%%%%%%%%%%%%%%%%%%%%%%%%%%%%%%%%%%%%%%%%%%%%%%%%%

\topmargin -1.7 cm \oddsidemargin -0.4 cm 
\textheight 245 mm
\textwidth 17 cm 
\theoremstyle{plain}

 \newtheorem{theorem}{Theorem}[section] 
\newtheorem{lemma}[theorem]{Lemma} \newtheorem{proposition}[theorem]{Proposition}

 \theoremstyle{definition}
\newtheorem{definition}{Definition}

 \theoremstyle{remark}
\newtheorem*{remark*}{Remark} \newtheorem{remark}{Remark}

\numberwithin{equation}{section}

%% Your own definitions

 \newcommand{\R}{\mathbb{R}}

\newcommand{\RR}{\R} \newcommand{\Rd}{{\R^{d}}}

 \newcommand{\I}{{J_1}}
\newcommand{\II}{{J_2}} \newcommand{\III}{{J_3}}
 
\renewcommand{\leq}{\leqslant} \renewcommand{\le}{\leq}
\renewcommand{\geq}{\geqslant} \renewcommand{\ge}{\geq}

\newcommand{\Cg}{C_0}

\DeclareMathOperator{\dist}{dist}
\DeclareMathOperator{\diam}{diam}

 \def\({\left(} \def\){\right)} \def\[{\left[}
  \def\]{\right]} \def\<{\langle} \def\>{\rangle}

\ushortCreate*(3)(.46){uline}

\newcommand{\WUSC}[3]{\textrm{\rm WUSC}(#1,#2,#3)}
\newcommand{\WLSC}[3]{\textrm{\rm WLSC}(#1,#2,#3)}
\newcommand{\lC}{{\underline{c}}}
\newcommand{\uC}{{\overline{C}}}
\newcommand{\la}{{\underline{\alpha}}}
\newcommand{\ua}{{\overline{\alpha}}}
\newcommand{\lt}{{\underline{\theta}}}
\newcommand{\ut}{{\overline{\theta}}}

\newcommand{\tX}{\tilde{X}}

\newcommand{\tnu}{\tilde{\nu}}
\newcommand{\tp}{\tilde{p}}
\newcommand{\tG}{\tilde{G}}
\newcommand{\tr}{\tilde{r}}

\def \EE {\mathbb{E}} \def \PP {\mathbb{P}} \def \RR {\mathbb{R}}  \def \Rd {{\RR^d}}     
\def \pK {\mathcal{K}^\nabla}

       \def \CII {C_2} \def \CIII
{C_3} \def \CIV {C_5}  \def \CVI {C_6}

\def \CXX{C_{1}}
\def \CXXI{C_{4}}

\def \kk {G}
\def \tp{\tilde{p}}
\def \tEE {\tilde{\mathbb{E}}}
\def \tPP {\tilde{\mathbb{P}}}
\def \PP {\mathbb{P}}

\def \tu {\tilde{u}}

\def \tG {\tilde G}
\def \tL {\tilde{\mathcal{L}}}

\def \tGDD {{\tilde G_D}}
\def \tP {\tilde P}

\def\deltaDD {\delta_D}

\title{
%Estimates of
Green function for gradient perturbation of
 unimodal L\'evy processes
\footnotetext{\textbf{\emph{2000 Mathematics Subject Classification:}} 47A55, 60J35, 60J50, 60J75, 47G20. \textbf{{\emph Key words and phrases:}} unimodal L\'evy process, heat kernel, smooth domain, Green function, gradient perturbation.\\ The research was  partially supported by grant MNiSW IP2012 018472 and the Alexander von Humboldt Foundation.
}}

\author{Tomasz  Grzywny, Tomasz Jakubowski and Grzegorz \.{Z}urek\\
 % Institute of Mathematics and Computer Sciences,\\
  Wroc\l{}aw University of Technology, Poland}

\begin{document}
\maketitle
\begin{abstract}
We prove that the Green function of a generator of isotropic unimodal L\'{e}vy processes with the weak lower scaling order
bigger than one and the Green function of its gradient perturbations are comparable
for bounded smooth  open sets if the drift function is from  an appropriate Kato class.
\end{abstract}

\section{Introduction}

Let $X_t$ be a pure-jump isotropic  unimodal L\'{e}vy process on $\Rd$, $d\geq 2$. That is, $X_t$ is a L\'evy processes with a rotationally invariant and radially non-increasing density function $p_t(x)$ on $\R^d \setminus \{0\}$. The characteristic exponent of $\{X_t\}$ equals
$$
\psi(x) = \int_{\RR^d}\left (1 -\cos ( x\cdot  z )\right) \nu(dz), \,\quad x\in\Rd.
$$
where   $\nu$ is a L\'{e}vy measure, i.e., $\int_{\Rd}\left(1\wedge|z|^2\right)\nu(dz)<\infty$.
 For general information on unimodal processes, we refer the reader to \cite{MR3165234, MR3225805, MR705619}.
One of the primary example of a mentioned class of processes is the isotropic $\alpha$-stable L\'evy process having the fractional Laplacian $\Delta^{\alpha/2}$ as a generator.

Perturbations of $\Delta^{\alpha/2}$ by the first order operators are currently widely studied by many authors \cite{MR2283957, MR2892584, MR3238505, MR2680400, MR3050510,  MR3129851, MR2369047, MR2875353,  MR2276260, MR3035054, MR3283160, MR1310558, MR3060702, MR3017289} from various points of view. In a recent paper \cite{MR2892584} the authors studied the Green function of $\Delta^{\alpha/2} + b(x) \cdot \nabla$ in bounded $C^{1,1}$ domains. Here $b$ is a vector field from the Kato class $K_d^{\alpha-1}$. It was shown that the Green function of the original process is comparable with the Green function of the perturbed process. In this paper we generalize the result of \cite{MR2892584} to the case of isotropic unimodal L\'evy processes. Let
\begin{equation}\label{generator}
{\cal L} f(x)= \int_{\Rd} \left(f(x+z)-f(x)
-{\bf 1}_{|z|<1}(z\cdot \nabla f(x))\right)\nu(dz)\,, \quad f\in C^2_b(\R^d)\,,
\end{equation}
be a generator of the process $X_t$. We will consider a non-empty bounded open $C^{1,1}$ set $D$ and the Green function $G_D$ for $\cal{L}$. Now, let $\tilde G_D(x,y)$ be a Green function for
$$
\tilde{\mathcal L} = \mathcal{L} + b(x) \cdot \nabla,
$$
where $b$ is a function from the Kato class  $\pK_d$ (see Section 2 for details). Our main result is

%We additionally assume that  there is $\Cg=\Cg(X)$ such that for any open set $D$

\begin{theorem}
\label{Theorem1}
Let $d\geq 2$, $b\in \pK_d$, and  let $D\subset \Rd$
be a bounded $C^{1,1}$ open set. We assume that the characteristic exponent
$$\psi\in \WLSC{\la}{0}{\lC}\cap \WLSC{\la_1}{1}{\lC_1} \cap \WUSC{\ua}{0}{\uC}, \qquad  \mbox{where }\la_1>1,$$
\begin{equation}
\label{gradEst}|\nabla_x G_D(x,y)| \leq \Cg \frac{ G_D(x,y)}{|x-y|\wedge\delta_D(x)\wedge 1}.
\end{equation}
Then, there exists a constant $C$ such that for $x,y \in D$,
  \begin{equation}
    \label{eq:egf}
C^{-1}G_D(x,y) \le \tilde G_D(x,y) \le C G_D(x,y)\,.
  \end{equation}
\end{theorem}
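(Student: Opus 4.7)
The plan is to follow the perturbation-series strategy of Bogdan--Jakubowski \cite{MR2892584}. Setting up Duhamel's identity for $\tilde{\mathcal L} = \mathcal L + b\cdot\nabla$, one formally writes $\tilde G_D = G_D + G_D\,(b\cdot\nabla)\,\tilde G_D$ and iterates to obtain the Neumann expansion
\[
\tilde G_D = \sum_{n\ge 0} G_D^{(n)},\qquad G_D^{(0)}=G_D,\quad G_D^{(n+1)}(x,y) = \int_D G_D(x,z)\, b(z)\cdot \nabla_z G_D^{(n)}(z,y)\, dz.
\]
The two-sided comparison \eqref{eq:egf} follows once one establishes an a priori bound $|G_D^{(n)}(x,y)| \le \eps^n\, G_D(x,y)$ for some $\eps<1/2$, uniformly in $x,y\in D$; summing the geometric series then yields both halves of \eqref{eq:egf} with constant $(1-\eps)/(1-2\eps)$ (in particular the lower bound comes from the same argument and does not require a separate positivity step).

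The inductive bound rests on a single one-step estimate,
\[
R(x,y):=\int_D \frac{G_D(x,z)\,|b(z)|\,|\nabla_z G_D(z,y)|}{G_D(x,y)}\,dz \le \eps,
\]
which I would prove by first applying \eqref{gradEst} to replace $|\nabla_z G_D(z,y)|$ by $\Cg\, G_D(z,y)/(|z-y|\wedge\delta_D(z)\wedge 1)$, and then invoking a 3G-type inequality
\[
\frac{G_D(x,z)\, G_D(z,y)}{G_D(x,y)} \le C\bigl(G_D(x,z)+G_D(z,y)\bigr),
\]
which for the present class of unimodal L\'evy processes is available from the sharp two-sided estimates of $G_D$ derived under $\WLSC{\la}{0}{\lC}\cap \WLSC{\la_1}{1}{\lC_1}\cap \WUSC{\ua}{0}{\uC}$ with $\la_1>1$. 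After 3G is applied, $R(x,y)$ is controlled by two Kato-type integrals $\int_D G_D(w,z)\,|b(z)|/(|z-y|\wedge\delta_D(z)\wedge 1)\,dz$, which are precisely the quantities appearing in the definition of $\pK_d$ and which can be driven arbitrarily small by a splitting $D=B(y,r)\cup(D\setminus B(y,r))$: the near-diagonal piece is small by the Kato property for $r$ small, while on the complement the ratio $|\nabla_z G_D|/G_D$ is uniformly bounded and the remaining integral is a standard Green-potential bound.

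The main obstacle I expect is the 3G inequality itself. For the fractional Laplacian it rests on the explicit quasi-homogeneous form of $G_D$; in the present setting one must substitute the radial profile $1/[\,|\cdot|^d\psi(1/|\cdot|)\,]$ and the boundary factors involving $\delta_D$, and produce the required algebraic separation of $G_D(x,z)/G_D(x,y)$ and $G_D(z,y)/G_D(x,y)$ under the weak scaling hypotheses, with $\la_1>1$ ensuring the boundary-decay exponent exceeds one so that the Kato integrals are finite. A secondary technical point is the rigorous justification of the Neumann expansion itself: one has to approximate $b$ by bounded smooth drifts, check convergence in a suitable space of kernels dominated by $G_D$, and pass to the limit. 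Once the 3G bound and this approximation are in place, the rest is a routine adaptation of the Bogdan--Jakubowski argument.
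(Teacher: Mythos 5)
Your Neumann-series scheme is exactly what the paper uses, but only as a first step: it is Lemma~\ref{Theorem1s}, and it proves \eqref{eq:egf} only for domains $D$ with $\diam(D)$ small enough (depending on $d,\nu,b$ and the distortion). The gap in your proposal is the uniform one-step bound $R(x,y)\le\eps<1/2$ on a \emph{fixed} bounded $D$. The Kato condition \eqref{eq:Kc} controls the contribution of $b$ on small balls and drives the relevant constant to zero as the length scale shrinks, but on a fixed domain the quantity you call $R(x,y)$ is only bounded by some finite constant $\Cg\CIII(D)$ (see Lemma~\ref{lem:boundKappa}); nothing in the hypotheses makes it less than $1/2$, so the geometric series need not converge and the positivity $\tilde G_D>0$ cannot be extracted this way. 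The paper therefore proves the small-domain result first, then uses it to deduce two-sided Poisson-kernel comparability \eqref{PoissonComp}, a Harnack inequality (Lemma~\ref{HIforL}) and a boundary Harnack principle (Lemma~\ref{BHPforL}) for $\tilde{\mathcal L}$, and finally patches together the general-$D$ comparability by a case analysis in $(\delta_D(x),\delta_D(y),|x-y|)$ with an additional bootstrap \eqref{eq:AB}--\eqref{eq:ubi}; this second half is the bulk of Section~4 and is not a routine adaptation.

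A secondary issue: the 3G inequality you propose,
\begin{equation*}
\frac{G_D(x,z)\,G_D(z,y)}{G_D(x,y)}\ \le\ C\bigl(G_D(x,z)+G_D(z,y)\bigr),
\end{equation*}
is false in this setting. Using the sharp estimate $G_D(y,z)\approx U(y-z)\,V(\delta_D(y))V(\delta_D(z))/V^2(r(y,z))$ from Lemma~\ref{GreenEst1}, take $x,y$ near $\partial D$ with $\delta_D(x)\approx\delta_D(y)\approx\eps$, $|x-y|\approx|x-z|\approx|z-y|\approx 1$ and $z$ interior; the left side stays bounded below while the right side is $O(V(\eps))\to 0$. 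The correct 3G (Proposition~\ref{lem:3G}) carries the boundary factors explicitly, $\frac{G_D(x,z)G_D(z,y)}{G_D(x,y)}\le \CII V(\delta_D(z))\bigl(\frac{G_D(x,z)}{V(\delta_D(x))}\vee\frac{G_D(z,y)}{V(\delta_D(y))}\bigr)$, and it is this form, combined with the $\la_1>1$ scaling through $V$, that makes the Kato integrals behave. With that corrected, your argument does recover Lemma~\ref{Theorem1s}, but you would still need the Harnack/BHP machinery to pass to arbitrary bounded $C^{1,1}$ sets.
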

Here WLSC and WUSC are the classes of functions satisfying a weak lower and weak upper scaling condition, respectively (see Section 2). The condition (\ref{gradEst}) is satisfied for a wide class of processes. For example, (\ref{gradEst}) holds under a mild assumption on a density of the L\'{e}vy measure, which is satisfied for any subordinate Brownion motion (see Lemma \ref{ex:gradEst}), (see also \cite[Theorem 1.4]{2015arXiv150102023C}).

Generally, we follow the approach of \cite{MR2892584}. Since some proofs are almost identical to the ones from \cite{MR2892584}, we omitted them. The main tool, we use in this paper, is the Duhamel (perturbation) formula (see Theorem \ref{lem:pf}). We note that this result cannot be obtained directly in the same way as the perturbation formula for fractional Laplacian (see \cite[Lemma 12]{MR2892584}. One of the other difficulties in this paper is that we do not have the explicit formula for the potential kernel $U(x)$ of $X_t$. Moreover, for stable process $\psi(\xi) = |\xi|^\alpha$, which gives a nice scaling of some main objects. Here, we have only weak scaling but it is sufficient for our purpose, although it makes the calculations a little harder. For example, in the estimates of the Green function a factor $V(\delta_D(x))$ appears. For stable process $V(r) = r^{\alpha/2}$ and if $y$ is such that $\delta_D(y) = \lambda \delta_D(x)$, then $V(\delta_D(y)) = \lambda^{\alpha/2} V(\delta_D(x))$. For the general unimodal process, $V$ satisfies weak scaling condition and we can only estimate $V(\delta_D(y))$.

The paper is organized as follows. In Section 2 we give the definitions of the processes $X$ and $\tilde{X}$ and present their basic properties. In Section 3, we introduce Green functions of $X$ and $\tilde{X}$. Lastly, in Section 4, we prove Theorem \ref{Theorem1}.

When we write $f (x) \approx g(x)$, we mean that there is a number $0 < C < \infty$ independent of $x$, i.e.
a constant, such that for every $x$ we have $C^{−1} f (x) \le g(x) \le C f (x)$. The notation
$C = C(a, b, \ldots, c)$ means that $C$ is a constant which depends only on $a, b, \ldots , c$.
We use a convention that constants denoted by capital letters do not change throughout the paper. For a radial function  $f:\Rd\rightarrow [0,\infty)$ we shall often write $f(r)=f(x)$ for any $x \in \Rd$ with $|x| = r$.

\section{Preliminaries}
In what follows, $\RR^d$ denotes the Euclidean space of dimension $d\ge
2$, $dy$ stands for the Lebesgue measure on $\Rd$. Without further mention we will only consider Borelian sets, measures and functions in $\RR^d$. As usual, we write $a \land b = \min(a,b)$ and $a \vee b = \max(a,b)$. By $x\cdot y$ we denote the Euclidean scalar product of $x,y\in \Rd$.
We let $B(x,r)=\{y\in \RR^d \colon |x-y|<r\}$.
For $D\subset \RR^d$, the distance to the complement of $D$, will be denoted by
$$\delta_D(x) =\dist(x,D^c)\,.$$

\begin{definition}
Let $\lt\in [0,\infty)$ and
 $\phi$ be a non-negative non-zero  function on $(0,\infty)$.
We say that
$\phi$ satisfies {the} {\it \bfseries weak lower scaling condition} (at infinity) if there are numbers
$\la>0$ and  $\lC \in(0,1]$  such that
\begin{equation}\label{eq:LSC}
 \phi(\lambda\theta)\ge
\lC\lambda^{\,\la} \phi(\theta)\quad \mbox{for}\quad \lambda\ge 1, \quad\theta>\lt.
\end{equation}

In short, we say that $\phi$ satisfies WLSC($\la, \lt,{\lC}$) and write $\phi\in\WLSC{\la}{ \lt}{\lC}$.
If $\phi\in\WLSC{\la}{0}{\lC}$, then we say
that $\phi$ satisfies the {\bfseries  \emph{global} weak lower scaling condition}.

Similarly, we consider
 $\ut\in [0,\infty)$.
The  {\it \bfseries weak upper scaling condition} holds if there are numbers $\ua<2$
and $\uC{\in [1,\infty)}$ such that
\begin{equation}\label{eq:USC}
 \phi(\lambda\theta)\le
\uC\lambda^{\,\ua} \phi(\theta)\quad \mbox{for}\quad \lambda\ge 1, \quad\theta> \ut.
\end{equation}
In short, $\phi\in\WUSC{\ua}{ \ut}{\uC}$. For {\bfseries  \emph{global} weak upper scaling} we require $\ut=0$ in \eqref{eq:USC}.
\end{definition}

Throughout the paper, $X_t$ will be the pure-jump isotropic unimodal L\'evy process on $\RR^d$. The L\'evy measure $\nu$ of $X_t$ is radially symmetric and non-increasing, so it admits the radial density $\nu$, i.e., $\nu(dx) = \nu(|x|)dx$.   Hence  the characteristic exponent $\psi$ of $X_t$ is radial as well.   %Hence, the characteristic exponent $\psi$ of %$X_t$ is given by
%$$
%\psi(\xi) = \int_{\RR^d} (1 -\cos ( \xi\cdot  x ) \nu(|x|)\,dx \,.
%$$
We assume that (see Theorem \ref{Theorem1})
\begin{align}
\psi  & \in \WLSC{\la}{0}{\lC} \cap \WUSC{\ua}{0}{\uC}\,, \label{eq:assum1}\\
\psi  & \in  \WLSC{\la_1}{1}{\lC_1}, \qquad  \mbox{for some } \la_1>1\,. \label{eq:assum2}
\end{align}

\noindent Following \cite{MR632968}, we define
$$
h(r)=\int_\Rd \(1\wedge \frac{|x|^2}{r^2}\)\nu(|x|)dx, \qquad r>0\,.
$$
Let us notice that
$$
h(\lambda r)\leq h(r)\leq \lambda^2  h(\lambda r), \quad \lambda>1.
$$
Moreover, by \cite[Lemma 1 and (6)]{MR3165234}
$$
2^{-1}\psi(1/r)\leq h(r)\leq \CXX \psi(1/r).
$$
In fact, we may write $\CXX = d\pi^2/2$ but it will be more convenient to write this constant as $\CXX$.
We define the function $V$ as follows,	
$$V(0)=0 \,\,\,\mathrm{and}\,\,\, V(r)=1/\sqrt{h(r)}, \quad r>0.$$
Since $h(r)$ is non-increasing, $V$ is non-decreasing. We have
\begin{equation}\label{subadd} V(r)\leq V(\lambda r)\leq \lambda V(r),\quad r\geq 0 ,\,\lambda >1.
\end{equation}
By weak scaling properties of $\psi$ and the property $h(r) \approx \psi(1/r)$, we get
\begin{equation}\label{scalV1} \(\frac{\lC}{2\CXX}\)^{1/2}\lambda^{\la/2}\leq \frac{V(\lambda r)}{V(r)}\leq (2\uC  \CXX)^{1/2}\lambda^{\ua/2},\quad r> 0 ,\,\lambda >1,
\end{equation}
\begin{equation}\label{scalV2}
\frac{V(\eta r)}{V(r)} \leq \(\frac{2\CXX}{\lC_1}\)^{1/2}\eta^{\la_1/2},\quad \,\eta<1,   r<1 .
\end{equation}
Therefore,
$ V   \in \WLSC{\la/2}{0}{\sqrt{\lC/(2\CXX)}} \cap \WUSC{\ua/2}{0}{\sqrt{2\uC\CXX}}$.\\

\begin{remark}\label{remScalExp}
The threshold $(0,1)$ in scaling of $V$ in \eqref{scalV2} may be replaced by any bounded interval at the expense of constant $\sqrt{2\CXX/\lC_1}$ (see  \cite[Section 3]{MR3165234}), i.e., for any $R>1$, there is a constant $c$ such that
\begin{equation}\label{scalV2R}
\frac{V(\eta r)}{V(r)} \leq \ c \eta^{\la_1/2},\quad \,\eta<1,   r<R .
\end{equation}
\end{remark}

The  global weak  lower scaling condition (assumption (\ref{eq:assum1})) implies $p_t(x)$ is jointly continuous on $(0,\infty)\times \Rd$ ($e^{-t\psi}\in L^1(\Rd)$) and (see \cite[Lemma 1.5]{MR3249349})
\begin{eqnarray}
p_t(x)&\approx& [V^{-1}(\sqrt{t})]^{-d}\wedge \frac{t}{V^2(|x|)|x|^d}   \label{p_t_L}\\
\nu(x)&\approx&\frac{1}{V^2(|x|)|x|^d}. \label{nu_L}
\end{eqnarray}

% A generator of this process has the following form, for $f\in C^2_b(\R^d)$
%\begin{equation}\label{generator}
%{\cal L} f(x)= \int_{\Rd} \left(f(x+z)-f(x)
%-{\bf 1}_{|z|<1}\left<z,\nabla f(x)\right>\right)\nu(|z|)dz.
%\end{equation}

Analogously to $\alpha$-stable processes, we define the Kato class for gradient perturbations.
\begin{definition}
We say that a vector field $b \colon \RR^d \to \RR^d$ belongs to the Kato class $\pK_d$  if
\begin{equation}\label{eq:Kc}\lim_{r\to0^+}\sup_{x\in\Rd}\int_{B(x,r)}\frac{V^2(|x-z|)}{|x-z|^{d+1}} |b(z)|dz=0.\end{equation}
\end{definition}
\begin{remark}
We note that $L^{\infty}(\Rd)\subset \pK_d$.
\end{remark}

\noindent Let us denote
$$
p(t,x,y)=p_t(y-x)\,.
$$
By \cite[Theorem 3.4]{GS}, we have
$$|\nabla_x p(t,x,y)|\leq c \frac{1}{V^{-1}(\sqrt{t})} p(t,x,y), \quad t>0, x,y\in\Rd.$$
Let $b \in \pK_d$. Following \cite{MR2283957} and \cite{MR2876511}, for $t>0$ and $x,y \in \RR^d$,  we recursively define
$$p_0(t,x,y)  =  p(t,x,y)\,,$$
$$
 p_n(t,x,y)  =  \int_0^t \int_{\RR^d} p_{n-1}(t-s,x,z) b(z) \cdot \nabla_z p(s,z,y)\,dz\,ds\,,\quad n \ge 1\,,
$$
and we let
\begin{equation}
\tp=\sum_{n=0}^\infty p_n\,.
\end{equation}
By \cite[Theorem 1.1]{MR2876511}, the series converges to a probability transition density function,  and
\begin{equation}\label{ptxy_comp}
  c_T^{-1} p(t,x,y) \le \tp(t,x,y) \le c_Tp(t,x,y)\,,\qquad x,y \in \RR^d\,,\; 0<t<T\,,
\end{equation}
where $c_T \to 1$ if $T \to 0$, see \cite[Theorem 3]{MR2876511}. Moreover, one can prove that $\tp$ is jointly continuous on $(0,\infty)\times\Rd\times\Rd$ (see \cite[Corollary 19]{MR2283957}). %\gz{...like it is in Lemma \ref{lem:DhkCon}?}
%By \tg{similar calculations like \cite[Theorem 2]{MR2876511} one can prove that  $\tp$ is a  fundamental solution for the operator }
%$$
%\widetilde{\mathcal{L}} = \mathcal{L} + b(x) \cdot \nabla_x\,.
%$$
%\textcolor{red}{
%From a general perspective the approach of \cite{MR2283957}, \cite{MR2876511} consist of using the semigroup as test functions, setting the assumptions on the perturbation so that
%$p_1$ is dominated by $p$ in short time, and recursively estimating multiple integrals defining $p_n$, so that the comparability with $p$ is preserved.
%Auxiliary estimates of $\nabla_z p(s,z,y)$ are obtained in \cite{MR2283957, MR2876511} by subordination to the Gaussian kernel, but the scope of the method is wider. }

%By \cite{GS}, there is $\CI>0$ such that
%

We consider the time-homogeneous transition probabilities
$$
P_t(x,A) =\int_A p(t, x,y)dy\,,\qquad \tilde{P}_t(x,A) =\int_A \tp(t, x,y)dy,
$$
$t>0$, $x\in \Rd$, $A\subset\Rd$.
By Kolmogorov's and Dinkin-Kinney's theorems the transition
probabilities $P_t$ and $\tilde{P}_t$ define in the usual way Markov probability measures
$\{\PP^x, \tPP^x,\,x\in \Rd\}$ on the space $\Omega$ of the
right-continuous and left-limited functions $\omega :[0,\infty)\to \Rd$.
We let $\EE^x, \tEE^x$ be the corresponding expectations.
We will denote by $X=\{X_t\}_{t\geq 0}$  the canonical process on $\Omega$, $X_t(\omega) = \omega(t)$. Hence,
$$
\PP(X_t \in B) = \int_B p(t,x,y)dy,\qquad \tPP(X_t \in B) = \int_B \tp(t,x,y)dy
$$
%In particular,
%\begin{equation}\label{character}
  %\EE^0 e^{iX_t\cdot \xi} = e^{-t\psi(\xi)}, \qquad \qquad \xi \in \RR^d, \;t \ge 0\,.
%\end{equation}

%We let $\tPP$, $\tEE$ be the Markov distributions and expectations defined by transition density $\tp$ on the canonical path space.
%\textcolor{red}{We define the heat kernel of $L$ on $D$ by the usual G.}

For any open set $D$ we define {\it the first exit time}\/ of the process $X_t$ from $D$,
$$\tau_D=\inf\{t>0: \, X_t\notin D\}\,.$$
Now, by the usual Hunt's formula, we define the transition density of the process {\it killed}\/ when leaving $D$
(\cite{MR0264757}, \cite{MR1329992}, \cite{MR3249349}):
$$
p_D(t,x,y)=p(t,x,y)-\EE^x[\tau_D<t;\, p(t-\tau_D, X_{\tau_D},y)],\quad t>0
,\,x,y\in \Rd \,.
$$
We briefly recall some well known properties of $p_D$ (see \cite{MR3249349}).
The function $p_D$ satisfies the Chapman-Kolmogorov equations
$$
\int_\Rd p_D(s,x,z)p_D(t,z,y)dz=p_D(s+t,x,y)\,,\quad s,t>0 ,\,
x,y\in \Rd\,.
$$
Furthermore, $p_D$ is jointly continuous (compare Lemma \ref{lem:DhkCon}) when $t\neq 0$, and we have
\begin{equation}\label{eq:gg}
  0\leq p_D(t,x,y)=p_D(t,y,x)\leq p(t,x,y)\,.
\end{equation}
In particular,
\begin{equation}
  \label{eq:9.5}
  \int_\Rd p_D(t,x,y)dy\leq 1\,.
\end{equation}
By Blumenthal's 0-1 law, radial symmetry of $p_t$
and $C^{1,1}$ geometry of the boundary of $\partial D$, we have
$\PP^x(\tau_D=0)=1$ for every $x\in D^c$.
In particular, $p_D(t,x,y)=0$ if $x\in D^c$ or $y\in D^c$.
By the strong Markov property,
$$
\EE^x[t<\tau_D;\, f(X_t)]=
\int_\Rd f(y)p_D(t,x,y)dy
\,,
\quad t>0\,,\; x\in\Rd\,,
$$
for functions $f\geq 0$.

For $s\in \RR$, $x\in \Rd$, and $\phi\in C^\infty_c(\RR\times D)$, we
have (see \cite[Remark 4.2]{2014arXiv1411.7907B} and \cite[the proof of Lemma 5]{MR2892584}) 
\begin{equation}
  \label{eq:fsolD}
  \int_{s}^\infty\int_{D}
  p_D(u-s,x,z)\left[
    \partial_u\phi(u,z)+\mathcal{L}_z \phi(u,z)\right]\,dzdu
  = -\phi(s,x)\,,
\end{equation}
which justifies calling $p_D$ the Dirichlet heat kernel of $\mathcal{L}$ on  $D$.

In a similar way, we define analogous object for the process $\tX$. Let $\tilde\tau_D=\inf\{t>0: \, \tX_t\notin D\}$.
 By Hunt's formula,
\begin{equation}\label{eq:Hunt}
\tp_D(t,x,y) = \tp(t,x,y) - \tEE^x\left[\tau_D < t;\; \tp(t-\tau_D,  X_{\tau_D},y)\right]\,.
\end{equation}
Except symmetry, $\tp_D$ has analogous properties as $p_D$, i.e. the Chapman-Kolmogorov equation holds
$$
\int_{\RR^d} \tp_D(s,x,z)\tp_D(t,z,y)dz=\tp_D(s+t,x,y)\,,\quad s,t>0 ,\,
x,y\in \Rd
$$
and $0\leq \tp_D(t,x,y)\leq \tp(t,x,y)$. Now, we will prove that
$\tp_D$ is jointly continuous on $(0,\infty)\times D \times D$. First, we need two preparatory lemmas.

\begin{lemma}\label{sup-p-t1}
Let $\delta>0$. Then $M_\delta:=\sup_{t>0,|x-y|\geq \delta}\tp(t,x,y)<\infty$.
\end{lemma}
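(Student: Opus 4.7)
The plan is to split the supremum into two time regimes at a fixed threshold, say $t=1$, handling each regime with a different tool. Off‑diagonal smallness controls the short‑time contribution, while the Markov semigroup structure controls the long‑time contribution.

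For $0<t<1$, I would invoke \eqref{ptxy_comp} with $T=1$ to get $\tp(t,x,y)\le c_1 p(t,x,y)$, and then use the second branch of the two‑sided estimate \eqref{p_t_L} to bound $p(t,x,y)\le c\,t/[V^2(|x-y|)|x-y|^d]$. Since $V$ is non‑decreasing, the function $r\mapsto V(r)^2 r^d$ is non‑decreasing, so for $|x-y|\ge\delta$ and $t<1$ this yields $\tp(t,x,y)\le c_1 c/[V^2(\delta)\delta^d]$, uniformly in $t$.

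For $t\ge 1$ the comparability \eqref{ptxy_comp} cannot be applied at large $T$ in a way that is both uniform and useful, so I would instead pivot through a fixed intermediate time using Chapman--Kolmogorov:
$$
\tp(t,x,y)=\int_{\Rd}\tp(t-\tfrac{1}{2},x,z)\,\tp(\tfrac{1}{2},z,y)\,dz.
$$
Here $\tp$ is a probability transition density, so $\int_{\Rd}\tp(t-1/2,x,z)\,dz=1$. At the fixed intermediate time $1/2$, \eqref{ptxy_comp} (with $T=1$) gives $\tp(1/2,z,y)\le c_1 p(1/2,z,y)$, and the first branch of \eqref{p_t_L} produces the uniform on‑diagonal bound $p(1/2,z,y)\le c\,[V^{-1}(\sqrt{1/2})]^{-d}$. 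Consequently $\tp(t,x,y)\le c_1 c\,[V^{-1}(\sqrt{1/2})]^{-d}$ for every $t\ge 1$ and every $x,y$, without even using the separation condition.

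Combining the two regimes shows $M_\delta$ is bounded by the maximum of the two constants just obtained, which is finite. The main obstacle is conceptual rather than computational: the perturbation comparability with $p$ is only available on bounded time intervals with a constant $c_T$ whose behaviour as $T\to\infty$ is not controlled, so off‑diagonal decay of $p$ cannot be propagated to $\tp$ at arbitrarily large times. The remedy is to use off‑diagonal decay only for short time (where \eqref{ptxy_comp} is cheap) and to absorb large times via the probability‑preserving semigroup identity, for which only an on‑diagonal upper bound at one fixed positive time is needed.
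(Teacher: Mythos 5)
Your proof is correct and follows essentially the same strategy as the paper: split at $t=1$, use the comparability \eqref{ptxy_comp} together with the off‑diagonal branch of the heat‑kernel estimate for short times, and use Chapman--Kolmogorov with a fixed pivot time plus an on‑diagonal bound for long times. The only cosmetic differences are your choice of pivot time $1/2$ instead of $1$ and your appeal to the first branch of \eqref{p_t_L} for the on‑diagonal bound where the paper simply bounds $p(1,z,y)$ by $p(1,0)$ via unimodality.
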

\begin{proof}
By \eqref{ptxy_comp} and \cite[Corollary 7]{MR3165234}, for $t\leq 1$ 
$$\tp(t,x,y)\leq c \frac{t}{V^2(|x-y|)|x-y|^d}.$$
Hence,
$$\sup_{0<t\leq 1,|x-y|\geq \delta}\tp(t,x,y)\leq \frac{c}{V^2(\delta)\delta^{d}}.$$
Furthermore,  by the semigroup property, for $t>1$,
$$\tp(t,x,y)\leq c\int_{\Rd}\tp(t-1,x,z)p(1,z-y)dy\leq c p(1,0).$$
These imply $$M_\delta\leq c\max\{(V(\delta)\delta^{d})^{-1},p(1,0)\}<\infty.$$
\end{proof}

\begin{lemma}
Let $\delta>0$. Then
\begin{eqnarray}
\lim_{s\to 0^+}\sup_{t\leq s, x\in \Rd} \tPP^x(|X_t-X_0|\geq \delta)&=&0,\label{tailPr}\\
\lim_{s\to 0^+}\sup_{x\in \Rd} \tPP^x(\tau_{B(x,\delta)}\leq s)&=&0.\label{exitPr}
\end{eqnarray}
\end{lemma}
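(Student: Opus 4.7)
The plan is to reduce \eqref{tailPr} to the analogous estimate for the unperturbed L\'evy process $X$ via the comparison \eqref{ptxy_comp}, and then to deduce \eqref{exitPr} from \eqref{tailPr} by a standard strong Markov argument that controls the exit probability by the tail of the time-$s$ marginal.

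For \eqref{tailPr}, I would take $T=1$ in \eqref{ptxy_comp}, so that for $0<t\leq 1$ one has $\tp(t,x,y)\leq c_1 p(t,x,y)$. Hence
\begin{equation*}
\tPP^x(|X_t-X_0|\geq \delta)\leq c_1\int_{|y-x|\geq \delta}p(t,x,y)\,dy = c_1\PP^0(|X_t|\geq \delta),
\end{equation*}
the last equality by translation invariance of the L\'evy process $X$. The right-hand side no longer depends on $x$ and tends to $0$ as $t\to 0^+$ by the stochastic continuity of $X$ (for instance $\EE^0 e^{i\xi\cdot X_t}=e^{-t\psi(\xi)}\to 1$, so $X_t\to 0$ in probability). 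Taking the supremum over $x\in\Rd$ and over $t\leq s$ yields \eqref{tailPr}. If a quantitative bound is preferred, one may instead use $p(t,0,y)\leq c\,t/(V^2(|y|)|y|^d)$ from \eqref{p_t_L} together with \eqref{nu_L} to obtain $\PP^0(|X_t|\geq \delta)\leq c\,t\int_{|y|\geq \delta}\nu(y)\,dy$.

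For \eqref{exitPr}, fix $x\in\Rd$, set $\sigma=\tilde\tau_{B(x,\delta)}$, and split according to whether $|X_s-x|$ is small or large. On $\{\sigma\leq s\}$ one has $|X_\sigma-x|\geq \delta$, so the triangle inequality gives
\begin{equation*}
\{\sigma\leq s\}\subset\{\sigma\leq s,\ |X_s-X_\sigma|\geq \delta/2\}\cup\{|X_s-x|>\delta/2\}.
\end{equation*}
Applying the strong Markov property under $\tPP^x$ at the stopping time $\sigma$, the first event has probability at most
\begin{equation*}
\tEE^x\bigl[\sigma\leq s;\ \tPP^{X_\sigma}(|X_{s-\sigma}-X_0|\geq \delta/2)\bigr]\leq A(s)\,\tPP^x(\sigma\leq s),
\end{equation*}
where $A(s):=\sup_{t\leq s,\ y\in\Rd}\tPP^y(|X_t-y|\geq \delta/2)$. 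By \eqref{tailPr}, $A(s)\leq 1/2$ for all sufficiently small $s$, so rearranging gives
\begin{equation*}
\tPP^x(\sigma\leq s)\leq 2\sup_{y\in\Rd}\tPP^y(|X_s-y|>\delta/2),
\end{equation*}
and the right-hand side tends to $0$ as $s\to 0^+$ by another application of \eqref{tailPr}.

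The only subtle point, and the main potential obstacle in adapting the usual argument, is the appeal to the strong Markov property for the non-translation-invariant process under $\tPP^x$. This is available once one has that $\tp$ is a jointly continuous Markov transition density and that the canonical process is right-continuous under $\tPP^x$; granted this, the whole proof reduces to two applications of \eqref{tailPr}.
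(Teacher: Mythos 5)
Your proposal is correct and follows essentially the same route as the paper: reduce \eqref{tailPr} to the unperturbed process via \eqref{ptxy_comp} (the paper then cites a quantitative tail bound $\int_{B_\delta^c}p(t,y)\,dy\leq ct/V^2(\delta)$ from the unimodal-density paper, while you use stochastic continuity or an equivalent bound through $p_t\lesssim t\nu$, both valid), and derive \eqref{exitPr} from \eqref{tailPr} by the standard strong Markov splitting, for which the paper simply cites the corresponding argument in the literature and you supply the details.
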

\begin{proof}
Let $s\leq 1$ and $t\leq s$. By \eqref{ptxy_comp} and \cite[Corollary 6]{MR3165234},
$$\tPP^x(|X_t-X_0|\geq \delta)\leq c_1\int_{B_{\delta}^c}p(t,y)dy\leq c \frac{t}{V^2(\delta)}\leq c(\delta)s.$$
Hence, we obtain \eqref{tailPr}.  \eqref{exitPr}  is a consequence of \eqref{tailPr} and the strong Markov property (see \cite[the proof of Lemma 3.1]{MR3050510}).
\end{proof}

Although, in this paper, we consider only bounded sets, the following lemma also holds for unbounded domains. To obtain it we use standard arguments (e.g.,\cite[Theorem 2.4]{MR1329992}).
\begin{lemma}\label{lem:DhkCon}
$\tp_D$ is jointly continuous on $(0,\infty)\times D \times D$.
\end{lemma}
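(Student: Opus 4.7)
The plan is to use the Hunt formula \eqref{eq:Hunt} to write $\tp_D(t,x,y) = \tp(t,x,y) - r_D(t,x,y)$ with
$$r_D(t,x,y) := \tEE^x\!\left[\tau_D < t;\, \tp(t-\tau_D, X_{\tau_D}, y)\right].$$
Since $\tp$ is already known to be jointly continuous on $(0,\infty) \times \Rd \times \Rd$, the task reduces to establishing joint continuity of $r_D$ on $(0,\infty) \times D \times D$.

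I would fix $(t_0, x_0, y_0) \in (0,\infty) \times D \times D$ and choose $\delta > 0$ so that $B(y_0, 2\delta) \subset D$. Because $X_{\tau_D} \in D^c$ almost surely, for every $y \in B(y_0, \delta)$ one has $|X_{\tau_D} - y| \geq \delta$, so Lemma \ref{sup-p-t1} supplies the uniform bound $\tp(t - \tau_D, X_{\tau_D}, y) \leq M_\delta$, which will serve as the dominating function throughout. Continuity of $r_D$ in the pair $(t, y)$ at $(t_0, y_0)$ should then be immediate from dominated convergence, since the integrand is continuous in $(t, y)$ off the set $\{\tau_D = t_0\}$ and $\tau_D$ has no atoms under $\tPP^x$.

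The main obstacle will be continuity in the starting point $x$. I would fix $\rho > 0$ with $\overline{B(x_0, 2\rho)} \subset D$ and, for $x \in B(x_0, \rho)$ and small $s \in (0, t_0/2)$, split
$$r_D(t,x,y) = \tEE^x[\tau_D \leq s;\, \tp(t-\tau_D, X_{\tau_D}, y)] + \tEE^x[s < \tau_D < t;\, \tp(t-\tau_D, X_{\tau_D}, y)].$$
The first summand is bounded by $M_\delta\, \tPP^x(\tau_{B(x, \rho)} \leq s)$, which by \eqref{exitPr} is arbitrarily small for small $s$, uniformly in $x \in B(x_0, \rho)$. For the second summand, applying the strong Markov property at time $s$ with $F(z) := r_D(t - s, z, y)$ (a bounded measurable function with $\|F\|_\infty \leq M_\delta$) gives
$$\tEE^x[s < \tau_D < t;\, \tp(t-\tau_D, X_{\tau_D}, y)] = \tEE^x[F(X_s)] - \tEE^x[\tau_D \leq s;\, F(X_s)],$$
the error term again being controlled by \eqref{exitPr}. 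Finally, $\tEE^x[F(X_s)] = \int_{\Rd} \tp(s,x,z) F(z)\, dz$, and since $\tp(s, x, \cdot)$ are probability densities depending continuously (pointwise) on $x$, Scheffé's lemma yields $L^1$-convergence $\tp(s, x, \cdot) \to \tp(s, x_0, \cdot)$ as $x \to x_0$, so integration against the bounded $F$ is continuous in $x$.

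A standard $\varepsilon/3$ argument -- first choose $s$ small to kill both exit-time error terms, then restrict $|x - x_0|$, and finally bring $(t, y)$ close to $(t_0, y_0)$ -- then combines these reductions into joint continuity of $r_D$ at the arbitrary interior point $(t_0, x_0, y_0)$, completing the proof.
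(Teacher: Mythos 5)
Your proof is correct and takes a genuinely different route from the paper's, though it starts from the same Hunt-formula decomposition $\tp_D=\tp-r_D$ and uses the same preparatory tools (Lemma \ref{sup-p-t1} and \eqref{exitPr}). The paper proves joint continuity of the approximant $\phi_s(t,x,y)=\tEE^x h_s(t,X_s,y)$ by establishing \emph{equicontinuity of $h_s(\cdot,z,\cdot)$ in $(t,y)$ uniformly in $z$} (through a three-term split $\I,\II,\III$ requiring separate small-time and far-field estimates on $\tp$) and then invoking the strong Feller property for pointwise $x$-continuity; finally $\tilde r_D$ is recovered as a uniform limit of $\phi_s$. You instead prove $(t,y)$-continuity of $r_D$ \emph{directly}, for fixed $x_0$, by dominated convergence under the bound $M_\delta$, and you get $x$-continuity \emph{uniform in $(t,y)$} from the exit-time estimate \eqref{exitPr} combined with the $L^1$-continuity $\|\tp(s,x,\cdot)-\tp(s,x_0,\cdot)\|_{L^1}\to 0$ supplied by Scheff\'e's lemma (pointwise continuity of $\tp$ plus conservation of mass). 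The two schemes are dual: the paper has uniformity in the $(t,y)$-modulus and pointwise $x$-continuity, you have pointwise $(t,y)$-continuity and uniformity in the $x$-modulus. Your route is arguably more economical, since it avoids the explicit three-piece decomposition and its tail estimates; the modest cost is having to note that Scheff\'e upgrades the strong Feller property to the needed uniformity.

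One inessential wrinkle: you invoke ``$\tau_D$ has no atoms under $\tPP^x$,'' which is neither proved nor needed. On the event $\{\tau_D=t_0\}$ the integrand still converges to $0$ -- for $t\downarrow t_0$ because $\tp(u,z,y)\to 0$ as $u\to 0^+$ for $|z-y|\ge\delta$ (as in the proof of Lemma \ref{sup-p-t1}), and for $t\uparrow t_0$ trivially because the indicator $\ind_{\tau_D<t}$ vanishes -- and the limit integrand is also $0$ there. So dominated convergence goes through without any statement about atoms of $\tau_D$.
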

\begin{proof}Let $0<\delta<r$,  $D^\delta=\{y\in D: \delta_D(y)\geq \delta \}$ and $D_r^\delta=D^\delta\cap B_r$. Generally, $\delta$ is close to $0$ and $r$ is large. %We prove continuity of $p_D$ on $[\delta,r]\times D_\delta \times D^r_\delta$ for arbitrary $0<\delta<r$. Hence, in fact continuity on $(0,\infty)\times D\times D$. % for any $\delta>0$.
We assume that $(t,x,y)\in [\delta,r]\times D^\delta \times D_r^\delta$.
We denote by
$$
\tilde{r}_D(t,x,y) = \tEE^{x}[\tp(t-\tau_D,X_{\tau_D},y),\tau_D<t],
$$
the killing measure of $\tX$. Hence,
$$
\tp_D(t,x,y) = \tp(t,x,y) - \tilde{r}_D(t,x,y).
$$
Let $s<\delta/2$,

$$h_s(t,x,y)= \tEE^{x}[\tp(t-s-\tau_D,X_{\tau_D},y),\tau_D<t-s]$$ and $\phi_s(t,x,y)=\tEE^x h_s(t,X_s,y)$.
By Markov property,
\begin{align*}
\tr_D(t,x,y)&-\phi_s(t,x,y)=\tEE^x[\tp(t-\tau_D,X_{\tau_D},y),\tau_D\leq s]\\&-\tEE^x[\tau_D\leq s,\tEE^{X_s}[\tp(t-s-\tau_D,X_{\tau_D},y),\tau_D<t-s]]
\end{align*}
By Lemma \ref{sup-p-t1}, \begin{equation}\label{con-p-t-1}|\tr_D(t,x,y)-\phi_s(t,x,y)|\leq 2M_{\delta}\tPP^x(\tau_D\leq s)\leq 2M_{\delta}\sup_{z\in\Rd}\tPP^z (\tau_{B(z,\delta)}\leq s).\end{equation}
Hence, by \eqref{exitPr}, it is enough to prove continuity of $\phi_s$ on $[\delta,r]\times D^\delta \times D_r^\delta$ for $0<s<\delta/2$.

 First, we prove equicontinuity of $h_s(\cdot,z,\cdot)$ on $[\delta,r]\times D_r^\delta$ for $z\in\Rd$. Fix $\varepsilon>0$. By \eqref{ptxy_comp} and (\ref{p_t_L}), there is $0<\lambda\leq\delta/4 $ such that  for $w\in D^c$, $v\in D^\delta$ and  $u\leq \lambda$,
\begin{equation}\label{eq:cont-p1}\tp(u,w,v)\leq c \frac{\lambda}{V^2(\delta)\delta^{d}}<\varepsilon.
\end{equation}
Next, by the semigroup property, (\ref{p_t_L}) and \eqref{ptxy_comp}, there is $R\geq 2r$ such that for $w\in B_R^c$, $v\in B_r$ and $u\leq r$,
\begin{equation}\label{eq:cont-p2}\tp(u,w,v)\leq \frac{c r}{ V^2(R/2)R^{d}}<\varepsilon.
\end{equation}
Now, we divide $h_s$ into tree parts and we treat them separately,
$$h_s(t,z,y)=\I(t,z,y)+ \II(t,z,y)+\III(t,z,y),$$
where
\begin{eqnarray*}\I(t,z,y)&=&\tEE^z[\tp(t-s-\tau_D,X_{\tau_D},y),\tau_D<t-s-\lambda, X_{\tau_D}\in B_{R}]\\
\II(t,z,y)&=&\tEE^z[\tp(t-s-\tau_D,X_{\tau_D},y),t-s-\lambda\leq\tau_D<t-s]\\ \III(t,z,y)&=& \tEE^z[\tp(t-s-\tau_D,X_{\tau_D},y),\tau_D<t-s-\lambda, X_{\tau_D}\in B^c_{R}].
\end{eqnarray*}
By \eqref{eq:cont-p1} and \eqref{eq:cont-p2},
\begin{equation}\label{eq:cont-p3}
\II(t,z,y)+\III(t,z,y)<2\varepsilon,\quad z\in \Rd, (t,y)\in[\delta,r]\times D_r^\delta.
\end{equation}
%Assume that $\delta\leq t_0\leq t\leq r$ and $t<t_0+\lambda/2$,  then
Since $\tp(\cdot,\cdot,\cdot)$ is continuous on $(0,\infty)\times\Rd\times\Rd$, it is uniform continuous on $[\lambda/2, r]\times B_{R}\times B_r$.  Hence, there is $0<\varepsilon_1\leq \lambda/2$  such that for $(u,w),(u_0,w_0)\in [\lambda/2,r]\times D_r^\delta$ and $w\in B_R$,
\begin{equation}\label{eq:cont-p4}|\tp(u,v,w)-\tp(u_0,v,w_0)|<\varepsilon, \quad \mathrm{if }\,\,\,\, |(u,w)-(u_0,w_0)|<\varepsilon_1, \; v \in \RR^d. \end{equation}
Assume that $(t_0,y_0)\in [\delta,r]\times D_r^\delta$ and $t_0\leq t$. Then
\begin{eqnarray*}\I(t_0,z,y_0)&=&\tEE^z[\tp(t_0-s-\tau_D,X_{\tau_D},y_0)|,\tau_D<t-s-\lambda, X_{\tau_D}\in B_{R}]\\
&-&\tEE^z[\tp(t_0-s-\tau_D,X_{\tau_D},y_0),t_0\leq\tau_D+s+\lambda<t, X_{\tau_D}\in B_{R}].
\end{eqnarray*}
This, \eqref{eq:cont-p1} and  \eqref{eq:cont-p4} imply, for $(t,y),(t_0,y_0)\in [\delta,r]\times D_r^\delta$,
\begin{equation}\label{eq:cont-p5}\sup_{z\in \Rd}|\I(t,z,y)-\I(t_0,z,y_0)|< 2 \varepsilon,\quad \mathrm{if} \,\,\,|(t,y)-(t_0,y_0)|<\varepsilon_1, \; z \in \RR^d.
\end{equation}
Combining  \eqref{eq:cont-p3} with \eqref{eq:cont-p5} gives equicontinuity of $h_s(\cdot,z,\cdot)$ on $[\delta,r]\times D_r^\delta$ for $z\in\Rd$.

This implies equicontinuity of $\phi_s(\cdot,z,\cdot)$ on $[\delta,r]\times D_r^\delta$ for $z\in\Rd$.
%These imply, for $t,t_0\in[\delta,r]$ and $y,y_0\in D_\delta\cap B_r$,
%$$%\begin{eqnarray*}
%\sup_{z\in D_\delta\cap B_r}|h_s(t,z,y)-h_s(t_0,z,y_0)|< 6\varepsilon,\quad |(t,y)-(t_0,y_0)|<\varepsilon_1 .
%$$
%Hence,  for $x\in D_delta\cap B_{r/2}$,
%\begin{eqnarray*}
%|\psi_s(t,x,y)-\psi_s(t_0,x,y_0)|\leq 6\epsilon +2M_\delta \p^x(|X_s|)
%\end{eqnarray*}
% Lemma \ref{sup-p-t1}, jointly continuity of $p$ and the bounded convergence theorem imply continuity of $h_s(\cdot,z,\cdot) $ on %$[\delta,r]\times  D^\delta$.
Since $\tilde{P}_s$ is strong Feller, $\phi_s(t,\cdot,y)$ is continuous on $\Rd$. Therefore $\phi_s(\cdot,\cdot,\cdot)$ is jointly continuous on $[\delta,r]\times\Rd\times D_r^\delta$. By \eqref{con-p-t-1} and \eqref{exitPr}, $r_D(\cdot,\cdot,\cdot)$ is jointly continuous on  $[\delta,r]\times D_r^\delta\times D_r^\delta$, what implies continuity on $(0,\infty)\times D \times D$. Since $\tp$ is jointly continuous, $\tp_D$ is jointly continuous on $(0,\infty)\times D \times D$.
\end{proof}

%We also have

By similar calculations like \cite[Theorem 2]{MR2876511}, one can prove that $\tp$ is the fundamental solutions for $\tL$.
\begin{lemma}\label{l:wkp}
For $s>0$, $x\in D$ and $\phi \in C_c^\infty\big((0,\infty)\times D\big)$, we have

\begin{equation}\label{tp_D_fs}
\int_s^\infty \int_{D} \tp_D(u-s,x,z) \left(\partial_u + \tilde{\mathcal{L}}  \right) \phi\,(u,z) \,dz\,du = - \phi(s,x)\,.
\end{equation}
\end{lemma}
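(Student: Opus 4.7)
The plan is to deduce \eqref{tp_D_fs} from the analogue of \eqref{eq:fsolD} for the full-space density $\tp$ on $\R^d$, combined with Hunt's formula \eqref{eq:Hunt}. The first step is to establish, adapting the calculation of \cite[Theorem 2]{MR2876511} announced just before the lemma, that for every $s>0$, $x\in\R^d$, and $\varphi\in C_c^\infty((0,\infty)\times\R^d)$,
$$(\star)\qquad \int_s^\infty\!\!\int_{\R^d}\tp(u-s,x,z)\big(\partial_u+\tL\big)\varphi(u,z)\,dz\,du=-\varphi(s,x).$$
The idea is to work with the Duhamel series $\tp=\sum_n p_n$ and to integrate by parts against $\varphi$ in $u$ and $z$, using that $p$ is already the fundamental solution for $\mathcal{L}$ while $b\cdot\nabla$ shifts $p_n$ to $p_{n+1}$, so the series telescopes; the two-sided bound \eqref{ptxy_comp}, the gradient estimate on $\nabla_x p$ recalled above, and the Kato condition \eqref{eq:Kc} supply the integrability needed for the interchanges.

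Next, given $\phi\in C_c^\infty((0,\infty)\times D)$, extend $\phi$ by zero to a function in $C_c^\infty((0,\infty)\times\R^d)$; the extension is smooth because $\supp\phi$ sits at positive distance from $\partial D$. Using Hunt's formula \eqref{eq:Hunt} together with the fact that $\tp_D(\cdot,x,z)=0$ for $z\in D^c$, one can rewrite the left-hand side of \eqref{tp_D_fs}, after enlarging the inner $z$-integral from $D$ to $\R^d$, as $A-B$, where
$$A=\int_s^\infty\!\!\int_{\R^d}\tp(u-s,x,z)\big(\partial_u+\tL\big)\phi(u,z)\,dz\,du,$$
$$B=\int_s^\infty\!\!\int_{\R^d}\tEE^x\!\left[\tau_D<u-s;\,\tp(u-s-\tau_D,X_{\tau_D},z)\right]\big(\partial_u+\tL\big)\phi(u,z)\,dz\,du.$$
Applying $(\star)$ to $\phi$ gives $A=-\phi(s,x)$. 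For $B$, Fubini and the strong Markov property at $\tau_D$ yield
$$B=\tEE^x\!\left[\int_{s+\tau_D}^\infty\!\!\int_{\R^d}\tp(v-(s+\tau_D),X_{\tau_D},z)\big(\partial_v+\tL\big)\phi(v,z)\,dz\,dv\right],$$
and a second application of $(\star)$, this time at time $s+\tau_D$ and base point $X_{\tau_D}$, collapses the bracket to $-\phi(s+\tau_D,X_{\tau_D})$. Since $X_{\tau_D}\in D^c$ almost surely under $\tPP^x$ while $\supp\phi\subset(0,\infty)\times D$, this random variable is identically zero, so $B=0$ and we conclude that the original integral equals $-\phi(s,x)$, as required.

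The substantive difficulty is the free-space identity $(\star)$; the Hunt-decomposition part is essentially bookkeeping and mirrors the reasoning used for $p_D$ in \cite[Lemma 5]{MR2892584}. Verifying $(\star)$ requires honest termwise control of the Duhamel series defining $\tp$ and of the nonlocal action of $\mathcal{L}$ on a generic test function in the absence of an explicit scaling for $\psi$; this is precisely where the weak scaling bounds \eqref{scalV1}--\eqref{scalV2} and the Kato smallness \eqref{eq:Kc} are indispensable.
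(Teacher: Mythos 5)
Your proposal is correct and matches the paper's (implicit) approach: the paper gives no proof of this lemma, only the remark that the free-space identity $(\star)$ for $\tp$ follows ``by similar calculations like \cite[Theorem 2]{MR2876511},'' and the killed version is then obtained by the standard Hunt-formula bookkeeping (cf.\ \cite[Lemma 5]{MR2892584}). Your decomposition into $A-B$, with $A$ evaluated by $(\star)$ and $B$ vanishing because $X_{\tau_D}\in D^c$ $\tPP^x$-a.s.\ while $\supp\phi\subset(0,\infty)\times D$, is exactly that bookkeeping and is sound; note only that the $B$ step does not really use the strong Markov property, just Fubini followed by a pathwise application of $(\star)$ at the parameters $(s+\tau_D(\omega),X_{\tau_D}(\omega))$.
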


\section{Green functions}
In this section we define and prove some properties of the Green functions of $\mathcal{L}$ and $\tilde{\mathcal{L}}$.
\subsection{Green function of $\mathcal{L}$}

\begin{definition}\label{def:C11}
 Non-empty open $D\subset \Rd$ is of class $C^{1,1}$ at scale $r>0$
if for
  every $Q\in \partial D$ there are balls
$B(x',r)\subset D$ and $B(x'',r)\subset D^c$ tangent at $Q$.
\end{definition}
If $D$ is $C^{1,1}$ at some unspecified scale (hence also at all smaller scales),  then we simply say $D$ is $C^{1,1}$.
The {\it localization radius}, $$r_0=r_0(D)=\sup\{r: D \mbox{ is } C^{1,1} \mbox{ at scale } r\},$$
refers to the local geometry of $D$, while the {\it diameter},
$${\rm diam}(D)=\sup\{|x-y|:\;x,y\in D\}\,,$$ refers to the global geometry of $D$.
The ratio ${\rm diam}(D)/r_0(D)\geq 2$ will be called the {\it distortion}\/ of $D$.
We can localize each $C^{1,1}$ open set as follows (see \cite[Lemma 1]{MR2283957})
\begin{lemma}\label{l:loc}
There exists $\kappa>0$ such that if $D$ is $C^{1,1}$ at scale $r$ and $Q\in \partial D$, then there is a $C^{1,1}$ domain $F\subset D$ with $r_0(F)>\kappa r$, ${\rm diam }(F)< 2r$ and
\begin{equation}\label{eq:loc}
D\cap B(Q,r/4)=F\cap B(Q,r/4)\,.
\end{equation}
\end{lemma}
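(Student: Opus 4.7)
My plan is to prove the lemma by rescaling to scale $r=1$ and then constructing $F$ explicitly via a local graph representation of $\partial D$ smoothly glued to the interior tangent ball. The dilation $x \mapsto x/r$ carries $D$ to a set $C^{1,1}$ at scale $1$, and all assertions about $F$ are preserved under scaling, so I may assume $r = 1$ throughout.

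For the construction, fix $Q \in \partial D$ with interior and exterior tangent balls $B(x',1)\subset D$ and $B(x'',1)\subset D^c$ at $Q$. Choose coordinates so that $Q=0$, $x'=e_d$, $x''=-e_d$. The fact that $\partial D$ is trapped between these two tangent balls is standard and yields, on some neighborhood $|y'|<\rho_0$ of universal radius $\rho_0$, a representation $\partial D = \{(y',\varphi(y'))\}$ with $\varphi(0)=0$, $\nabla\varphi(0)=0$, and $\|\varphi\|_{C^{1,1}}$ bounded by a universal constant. Let $\chi\in C^\infty(\R)$ be a fixed cutoff with $\chi\equiv 1$ on $[0,1/8]$ and $\chi\equiv 0$ on $[1/4,\infty)$, and let $\psi(y')=1-\sqrt{1-|y'|^2}$ parametrize the lower hemisphere of $\partial B(x',1)$ over $|y'|<1$. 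I then define the interpolant
$$\tilde\varphi(y') = \chi(|y'|)\,\varphi(y') + \bigl(1-\chi(|y'|)\bigr)\,\psi(y'),\qquad |y'|<1,$$
and take $F$ to be the open set enclosed by the surface consisting of the graph of $\tilde\varphi$ over $\{|y'|\le 1\}$ together with the upper hemisphere of $\partial B(x',1)$. By construction $F\subset B(x',1)$, so $\diam(F)\le 2$, and $F\cap B(0,1/4)=D\cap B(0,1/4)$ since $\tilde\varphi=\varphi$ on $|y'|\le 1/8$ (and the ball $B(0,1/4)$ lies well inside the strip where the two graphs coincide, provided $\rho_0\geq 1/4$, which one arranges by shrinking the cutoff scale).

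The main obstacle is to verify that $F$ is $C^{1,1}$ at a scale $\kappa$ independent of $D$. On the plug region $|y'|\le 1/8$ the surface coincides with $\partial D$, inheriting its universal $C^{1,1}$ bound. On the transition region $1/8\le |y'|\le 1/4$ the function $\tilde\varphi$ is a convex combination of $\varphi$ and $\psi$ with a fixed cutoff $\chi$, so $\|\tilde\varphi\|_{C^{1,1}}$ is bounded by a universal constant depending only on $\chi$ and on the universal bound for $\varphi$ and $\psi$. On $|y'|\ge 1/4$ the surface is the sphere $\partial B(x',1)$, which has curvature $1$. Where the graph portion meets the hemispherical portion (at $|y'|=1$, i.e.\ the equator of $\partial B(x',1)$), the two pieces agree together with their tangent planes, so the gluing produces a $C^{1,1}$ surface with a universal curvature bound. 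A standard curvature-to-tangent-ball argument then produces interior and exterior tangent balls of some universal radius $\kappa>0$ at every point of $\partial F$, giving $r_0(F)\ge\kappa$. Undoing the rescaling proves the lemma for general $r$.
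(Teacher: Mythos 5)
The paper does not actually prove this lemma; it recalls it from \cite[Lemma 1]{MR2283957}, so there is no in-paper argument to compare against. Your plug-and-glue construction is the natural way to prove such a statement and the overall scheme (rescale to $r=1$, local $C^{1,1}$ graph over a universal disc, interpolate to a model surface with a fixed cutoff, translate a universal curvature bound into universal tangent balls) is sound. However, as written the construction does not yield the required diameter bound. You glue $\varphi$ to $\psi$, the \emph{lower} boundary of the interior tangent ball $B(x',1)$, and take $F$ to be the region enclosed by the graph of $\tilde\varphi$ and the \emph{upper} hemisphere of $\partial B(x',1)$. Since $\varphi\le\psi$ (the boundary of $D$ lies below the interior tangent sphere), one has $\tilde\varphi\le\psi$ pointwise, so $F\supset B(x',1)$, and near $Q$ the set $F$ contains additional points strictly below the lower hemisphere. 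Thus $\diam(F)>2$, whereas the lemma demands $\diam(F)<2r=2$; your assertion ``By construction $F\subset B(x',1)$'' has the inclusion reversed. The fix is to glue instead to a strictly smaller interior tangent ball, for instance $B(\tfrac12 e_d,\tfrac12)$, which is still internally tangent to $\partial D$ at $Q$ and leaves room for a transition annulus of universal width between the plug region $\{|y'|\le 1/4\}$ and the place where the small sphere's graph turns vertical; with the bulge then being a uniformly small perturbation of a ball of diameter $1$, one gets $\diam(F)<2$ comfortably.

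A secondary slip: with $\chi\equiv 1$ only on $[0,1/8]$, the identity $F\cap B(Q,1/4)=D\cap B(Q,1/4)$ fails, because $B(Q,1/4)$ contains points with $|y'|$ up to $1/4$, where $\tilde\varphi$ already differs from $\varphi$. You need $\chi\equiv 1$ on all of $[0,1/4]$; the phrase ``shrinking the cutoff scale'' points in the wrong direction — you must enlarge the plug region and push the transition annulus outward, which again forces the interior tangent ball to be taken at a strictly smaller, but still universal, radius. The remaining ingredients you invoke (the two-sided tangent-ball condition yielding a local $C^{1,1}$ graph over a disc of universal radius; the interpolant having universally bounded $C^{1,1}$ norm; a closed hypersurface with universally bounded second fundamental form and no self-approach admitting two-sided tangent balls of universal radius) are standard and correct, though each would merit at least a citation in a full write-up.
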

\noindent
We will write $F=F(z,r)$, and we note that the distortion of $F$ is at most $2/\kappa$, an absolute constant.
$$\mbox{In what follows $D$  will be a non-empty bounded $C^{1,1}$ open set in $\Rd$.}$$
We note that such $D$ may be disconnected but then it may only have a finite number of connected components, at a positive distance from each other.

\begin{definition}
We say that a function $h$ is $\mathcal{L}$-harmonic in the open set $D$ if for every $U$ such that $\overline{U} \subset D$, we have
$$
h(x) = \EE^x h(X_{\tau_U}), \quad x \in \RR^d\,.
$$
\end{definition}
\noindent We define the Green function of $\mathcal{L}$ for $D$,
\begin{align}\label{eq:12.5}
 G_D(x,y)=\int_0^\infty p_D(t,x,y)dt,  \qquad x,y \in \RR^d\,.
\end{align}
We briefly recall some basic properties of $G_D(x,y)$ (see \cite{{MR3249349}} for details). For $x\in D^c$ or $y\in D^c$, $G_D(x,y) = 0$. $G_D(x,y)$ is symmetric, continuous for $x\neq y$ and ${G_D}(x,x) = \infty$ for $x\in D$.
Furthermore, $G_D(\cdot,y)$ is $\mathcal{L}$-harmonic in $D \setminus \{y\}$ for every $y \in D$. We also have

\begin{lemma}\label{ex:gradEst}
Let $-\frac{\nu'(r)}{r}$ be non-increasing. Then,	 \eqref{gradEst} holds.
\end{lemma}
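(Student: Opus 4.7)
The plan is to deduce \eqref{gradEst} from an interior gradient estimate for non-negative $\mathcal{L}$-harmonic functions, applied to $h(x)=G_D(x,y)$, which is non-negative and $\mathcal{L}$-harmonic on $D\setminus\{y\}$. For $x\in D$ with $x\neq y$, set $r=\tfrac12(|x-y|\wedge\delta_D(x))$, so that $B(x,r)\subset D\setminus\{y\}$. The whole reduction is to prove the interior bound: for any non-negative $h$ that is $\mathcal{L}$-harmonic on $B(x_0,r)$ one has $|\nabla h(x_0)|\leq C\, h(x_0)/(r\wedge 1)$. Applied to $h=G_D(\cdot,y)$ at $x_0=x$, this immediately yields \eqref{gradEst}; the $\wedge 1$ in the denominator corresponds to the cutoff $\mathbf{1}_{|z|<1}$ in the definition \eqref{generator} of $\mathcal{L}$, whose contribution is only seen at scales $r\geq 1$.

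To establish the interior gradient estimate, I would proceed in two stages. First, I would invoke \cite[Theorem 3.4]{GS}, already used in the excerpt, which gives the heat-kernel gradient bound $|\nabla_x p(t,x,y)|\leq c\, p(t,x,y)/V^{-1}(\sqrt t)$ under a radial monotonicity/convexity hypothesis on $\nu$; the assumption that $-\nu'(r)/r$ is non-increasing is precisely a standard sufficient condition for that result, since it yields the required smoothness and monotonicity of $\nu$. Second, I would upgrade this pointwise heat-kernel estimate to a bound on $\nabla_x h(x_0)$ for general non-negative $\mathcal{L}$-harmonic $h$ on $B(x_0,r)$, via the Ikeda--Watanabe/Poisson representation $h(x)=\int_{B(x_0,r)^c} h(z)\, P_{B(x_0,r)}(x,z)\,dz$ for $x$ near $x_0$, differentiating under the integral. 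The necessary smoothness of the Poisson kernel in the center variable follows from the smoothness of $\nu$ (guaranteed by the hypothesis) together with the identity $P_B(x,z)=\int_B G_B(x,u)\,\nu(|u-z|)\,du$; the weak-scaling properties \eqref{scalV1}--\eqref{scalV2R} of $V$ then convert the heat-kernel estimate into the sharp $1/(r\wedge 1)$ dependence.

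The hard part will be this second stage: producing a sharp interior gradient estimate for $\mathcal{L}$-harmonic functions when only a general isotropic unimodal L\'evy process is available. For the $\alpha$-stable process one exploits the Riesz-type explicit formula for $P_B$, but here no such formula exists, and one must instead combine the heat-kernel gradient estimate, the Ikeda--Watanabe representation and the weak-scaling of $V$ in order to track the $r$-dependence uniformly in the boundary data (the values of $h$ outside $B(x_0,r)$). Once that interior estimate is in hand, the application to $G_D(\cdot,y)$ that produces \eqref{gradEst} is the short direct computation sketched in the first paragraph.
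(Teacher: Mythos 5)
Your overall reduction is correct and matches the paper's: use the $\mathcal{L}$-harmonicity of $G_D(\cdot,y)$ on $D\setminus\{y\}$ (more precisely, on $D\setminus B(y,r)$ for small $r$) and apply an interior gradient estimate for non-negative $\mathcal{L}$-harmonic functions at $x$, noting that the distance from $x$ to the complement of $D\setminus B(y,\tfrac{|x-y|}{2})$ is comparable to $|x-y|\wedge\delta_D(x)$. However, your plan leaves the decisive ingredient unproved: you explicitly defer the ``hard part,'' namely the interior gradient estimate $|\nabla h(x_0)|\leq C\,h(x_0)/(r\wedge 1)$ for non-negative $\mathcal{L}$-harmonic $h$ on $B(x_0,r)$, sketching a two-stage program (heat-kernel gradient bound from \cite[Theorem 3.4]{GS}, then an upgrade via the Ikeda--Watanabe/Poisson representation) but not carrying it out. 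That deferred step is in fact the entire content of the lemma. The paper does not rederive it: it cites \cite[Theorem 1.1, Proposition 1.3]{TKMR} (Kulczycki--Ryznar), which provides exactly this interior gradient estimate under the monotonicity hypothesis $-\nu'(r)/r$ non-increasing, and the proof of the lemma then reduces to the two-line distance computation you also sketched. So the proposal, as written, has a genuine gap where the citation should be.

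Two smaller remarks. First, your two-stage plan is in spirit how such estimates are actually established in \cite{TKMR}, so the direction is reasonable; but passing from the heat-kernel gradient bound to the Poisson-kernel gradient bound, and then integrating against arbitrary nonnegative boundary data while keeping the $r$-dependence sharp, is a substantial technical argument that cannot simply be waved at, especially since one needs uniformity over unimodal $\nu$ without an explicit kernel formula. Second, your attribution of the $\wedge 1$ to the truncation $\mathbf{1}_{|z|<1}$ in \eqref{generator} is misleading: the generator's value is independent of that truncation (modulo a drift that vanishes by symmetry), and the $\wedge 1$ comes from the form of the Kulczycki--Ryznar estimate itself, which does not improve for large balls; in the paper's proof the $\wedge 1$ is simply a harmless weakening of $|x-y|\wedge\delta_D(x)$, accepted because it is built into the cited theorem.
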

\begin{proof}
	Since $G_D(\cdot,y)$ is $\mathcal{L}$-harmonic on $D\setminus B(y,r)$ for small $r>0$, by \eqref{nu_L} and  \cite[Theorem 1.1, Proposition 1.3]{TKMR}, we have
\begin{equation}\label{eq:estGradGreena}
|\nabla_x G_D(x,y)| \leq c\frac{ G_D(x,y)}{\delta_{D\setminus B(y,\frac{|x-y|}{2})}(x)} \leq 2c\frac{ G_D(x,y)}{|x-y|\wedge\delta_D(x)\wedge 1}.
\end{equation}
\end{proof}

%It follows that $G_D(x,y)$ is symmetric and lower semi-continuous,
%and
%\begin{equation}\label{eq:GreenFormula}
%G_D(x,y) + \int_{D^c} \mathcal{A}_{d,\alpha} |y-z|^{\alpha-d} \omega_D^x(dz)= \mathcal{A}_{d,\alpha} |x-y|^{\alpha-d}\,.
%\end{equation}
The Green operator of $\mathcal{L}$ for $D$ is
$$
G_Df(x) = \EE^x \int_0^{\tau_D}f(X_t)dt=\int_{\Rd}G_D(x,y)f(y)dy, \quad
x \in \RR^d\,,
$$
and we have
\begin{equation}\label{eq:fg}
G_D (\mathcal{L}\phi) (x) = \int_D G_D(x,y) \mathcal{L}\phi(y) dy = -\phi(x)\,,\quad x\in \Rd,\, \phi \in C^\infty_c(D)\,.
\end{equation}
By Ikeda - Watanabe formula \cite{MR0142153},
%asserts that for $x\in
%D$ the $\PP^x$-distribution of $(\tau_D, X_{{\tau_D}-},X_{\tau_D})$
%restricted to $X_{{\tau_D}-}\neq X_{\tau_D}$ is given by the density
%function
%\begin{equation}
  %\label{eq:IWf}
  %(s,u,z)\mapsto p_D(s,x,u)\nu(z-u)\,.
%\end{equation}
%The $C^{1,1}$ geometry of $D$
%implies that $\PP^x(X_{{\tau_D}-}\neq X_{\tau_D})=1$ for $x\in D$ (\cite{MR1438304}).
%By (\ref{eq:12.5}), (\ref{eq:IWf}) and Tonelli's theorem
the $\PP^x$-distribution of
$X_{\tau_D}$ has a density function, called the Poisson kernel and defined as
\begin{equation}
  \label{eq:Pk}
  P_D(x,z)=\int_D G_D(x,y)\nu(z-y)dy\,, \qquad x \in D,\; z \in (\overline{D})^c\,.
\end{equation}
Hence,
$$
\PP^x(X_{\tau_D} \in B) = \int_B P_D(x,z) dz\,, \qquad B \subset (\overline{D})^c\,.
$$
Because of the $C^{1,1}$ geometry of $D$, $\PP^x(X_{\tau_D} \in \partial D)=0$ (\cite{MR1825650}), hence, the above formula holds for $B \subset D^c$ (we put $P_D(x,z) = 0$ for $z \in \partial D$).

By $G$ we denote the potential kernel of $X$ , that is
$$
G(x) = \int_0^\infty p_t(x)\,dt\,,
$$
which is finite on $\Rd\setminus\{0\}$ since $d\geq2$ and the global weak upper scaling condition for $\psi$ holds.
For $x \in \RR^d \setminus \{0\}$, we denote
$$
U(x)=\frac{V^2(|x|)}{|x|^d}.
$$
We note that by \eqref{subadd},  $U(x)$ is radially non-increasing. In \cite[Theorem 3 and Section 4]{MR3225805}, it was proved that $G(x)\approx U(x)$ for $x\neq 0.$
% Since  $d\geq 2$, by \eqref{subadd} $U$ is radially non-increasing.
Let
$$r(y,z)=\delta_D(y)\vee \delta_D(z)\vee |y-z|\,.$$
\begin{lemma}\label{GreenEst1}
Let $D$ be a bounded open $C^{1,1}$ set. Then
$$G_D(y,z)\approx U(y-z)\frac{V(\delta_D(y))V(\delta_D(z))}{V^2(r(y,z))}, \quad y,z\in \Rd, $$
where the comparability constant depends only on $\psi$ and a distortion of $D$.
\end{lemma}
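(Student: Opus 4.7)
The plan is to deduce the Green function estimate from sharp two-sided Dirichlet heat kernel bounds on bounded $C^{1,1}$ open sets, and then integrate in time. Under the scaling assumptions \eqref{eq:assum1}--\eqref{eq:assum2} the analysis of \cite{MR3249349} yields, for $0<t\le T$ with $T=T(\diam D)$,
$$
p_D(t,x,y) \approx \left(1 \wedge \frac{V(\delta_D(x))}{\sqrt{t}}\right)\left(1 \wedge \frac{V(\delta_D(y))}{\sqrt{t}}\right)\left([V^{-1}(\sqrt{t})]^{-d} \wedge \frac{t}{V^2(|x-y|)|x-y|^{d}}\right),
$$
together with an exponential tail $p_D(t,x,y)\lesssim e^{-\lambda t}V(\delta_D(x))V(\delta_D(y))$ for $t>T$, where the constants depend only on $\psi$ and the distortion of $D$. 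The first step is to record these estimates and check that the contribution $\int_T^\infty p_D(t,y,z)\,dt \lesssim V(\delta_D(y))V(\delta_D(z))$ is absorbed into the right-hand side of the lemma, using that on the bounded set $D$ we have $U(y-z)\gtrsim U(\diam D)$ and $V^2(r(y,z))\lesssim V^2(\diam D)$.

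For the small-time piece $\int_0^T p_D(t,y,z)\,dt$, I would set $a = V(\delta_D(y))\wedge V(\delta_D(z))$, $b = V(\delta_D(y))\vee V(\delta_D(z))$, $R=|y-z|$, and partition $(0,T]$ at the breakpoints $a^2$, $b^2$ and $V^2(R)$. On each sub-interval the two minima reduce to explicit monomials in $t$, and the weak scaling bounds \eqref{scalV1}--\eqref{scalV2R}, extended past the threshold $r=1$ via Remark \ref{remScalExp}, permit the integrals $\int [V^{-1}(\sqrt t)]^{-d}\,dt$ and $\int t/[V^2(R)R^d]\,dt$ to be evaluated up to constants that depend only on the scaling data. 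Summing the contributions and case-analyzing which of $\delta_D(y),\delta_D(z),|y-z|$ realizes $r(y,z)$ should yield the claimed shape: in the interior regime $R\lesssim a$ the survival factor equals $1$ and one recovers $G_D(y,z)\approx G(y-z)\approx U(y-z)$, with $V(\delta_D(y))V(\delta_D(z))/V^2(r(y,z))$ comparable to $1$; in the boundary regime $R\gtrsim b$ the survival factor is $V(\delta_D(y))V(\delta_D(z))/t$ and multiplies the free-potential integral by exactly $V(\delta_D(y))V(\delta_D(z))/V^2(R)$, which matches the boundary correction since $r(y,z)=R$ there; the mixed case $a\le R\le b$ interpolates between these two.

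The main obstacle I anticipate is the lower bound, which requires the full two-sided heat kernel estimate from \cite{MR3249349} rather than a one-sided bound, and a careful verification that, on the transitional time interval $t\in [V^2(R)\wedge b^2,\; V^2(R)\vee b^2]$, the ``$\wedge$'' in the spatial factor sits in the correct regime so that the upper and lower integrals remain comparable up to universal constants. A secondary subtlety is tracking constants: the scaling hypothesis $\WLSC{\la_1}{1}{\lC_1}$ enters only through Remark \ref{remScalExp}, which extends the short-range scaling of $V$ to scales up to $\diam D$; this is the step that forces the final comparability constant to depend on $D$ only via its distortion, as asserted.
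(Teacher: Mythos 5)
Your starting point and basic plan coincide with the paper's: both take the two-sided Dirichlet heat kernel estimate of \cite{MR3249349} and integrate in time. Where you differ is in the organization. The paper does not carry out the time integration directly; it cites \cite[proof of Theorem 7.3]{2013arXiv1303.6449C} to land at the intermediate form
\[
G_D(y,z)\approx U(y-z)\Big(\tfrac{V(\delta_D(y))V(\delta_D(z))}{V^2(|y-z|)}\wedge 1\Big),
\]
and then reduces the lemma to an entirely elementary algebraic comparison between $\big(\cdot\wedge 1\big)$ and $1/V^2(r(y,z))$. That comparison uses only the monotonicity of $V$ and the subadditivity bound \eqref{subadd}, $V(\lambda r)\le\lambda V(r)$, handled by three cases according to whether $r(y,z)$ equals $|y-z|$ or the larger of $\delta_D(y),\delta_D(z)$, and whether the two distances to the boundary are comparable. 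Your plan of partitioning $(0,T]$ at $a^2$, $b^2$, $V^2(R)$ and case-analyzing would work, but it re-derives the cited result and mixes the analytic integration with the algebraic normalization, which the paper keeps separate.

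One inaccuracy worth flagging: you attribute the dependence of the comparability constant on $\psi$ and the distortion to $\WLSC{\la_1}{1}{\lC_1}$ via Remark~\ref{remScalExp}. In fact the paper's proof of this lemma does not use Remark~\ref{remScalExp} at all; the conversion step needs nothing beyond monotonicity of $V$ and \eqref{subadd}, and the distortion dependence enters solely through the cited heat kernel estimates of \cite{MR3249349} and the time integration in \cite{2013arXiv1303.6449C}. The local scaling with $\la_1>1$ is used elsewhere in the paper (e.g.\ Lemma~\ref{GDUnif} and Lemma~\ref{lem:boundKappa}), but not here.
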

\begin{proof}
Taking the estimates of $p_D(t,x,y)$ (see \cite[Proposition 4.4 and  Theorem 4.5]{MR3249349}) and integrating them against time (see
\cite[the proof of Theorem 7.3]{2013arXiv1303.6449C}), we get
$$G_D(y,z)\approx U(y-z)\(\frac{V(\delta_D(y))V(\delta_D(z))}{V^2(|y-z|)}\wedge 1\),$$
where the comparability constant depends on $\psi$ only through the scaling characteristics	 and a distortion of $D$.
Since $V$ is non-decreasing, we have
$$\frac{V(\delta_D(y))V(\delta_D(z))}{V^2(r(y,z))} \leq \frac{V(\delta_D(y))V(\delta_D(z))}{V^2(|y-z|)}\wedge 1.$$
By symmetry of $G_D(x,y)$, we may assume that $\delta_D(y)\leq \delta_D(z)$. If $r(y,z)=|y-z|$, then
$$
\frac{V(\delta_D(y))V(\delta_D(z))}{V^2(|y-z|)}\wedge 1 = \frac{V(\delta_D(y))V(\delta_D(z))}{V^2(r(y,z))}.
$$
Let $r(y,z)=\delta_D(z)$. If $\delta_D(y) \geq \delta_D(z)/2$, then
$$\frac{V(\delta_D(y))V(\delta_D(z))}{V^2(r(y,z))} \ge \frac{V(\delta_D(y))}{V(2\delta_D(y))}\geq \frac{1}{2}\geq \frac{1}{2}\( \frac{V(\delta_D(y))V(\delta_D(z))}{V^2(|y-z|)}\wedge 1\).$$
If $\delta_D(y) < \delta_D(z)/2$, then $r(y,z) = \delta_D(z) < 2|y-z|$. Hence, by \eqref{subadd},
\begin{eqnarray*}
\frac{V(\delta_D(y))V(\delta_D(z))}{V^2(|y-z|)}\wedge 1 \leq  \frac{V(\delta_D(y))V(\delta_D(z))}{V^2\(r(y,z)/2\)}
 \leq 4\, \frac{V(\delta_D(y))V(\delta_D(z))}{V^2(r(y,z))}, \quad y,z\in D.
\end{eqnarray*}
\end{proof}

The following result is the so-called $3G$-theorem (see \cite{MR2892584}).
\begin{proposition}\label{lem:3G}Let $D$ be a bounded open $C^{1,1}$ at scale $r>0$.
There is a constant $\CII=\CII(d,\psi,\diam(D)/r)$ such that
$$\frac{ G_D(x,z) G_D(z,y)}{ G_D(x,y)}\leq \CII V(\delta_D(z)) \(\frac{ G_D(x,z)}{V(\delta_D(x))}\vee \frac{ G_D(z,y)}{V(\delta_D(y))}\).$$
\end{proposition}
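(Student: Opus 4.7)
My plan is to substitute the sharp two-sided estimate from Lemma~\ref{GreenEst1} into both sides of the target inequality and reduce it to a clean geometric comparison involving only $U$ and the quantity $r$. Writing $G_D(a,b)\approx U(a-b)V(\delta_D(a))V(\delta_D(b))/V^2(r(a,b))$, the boundary factors $V(\delta_D(x))$, $V(\delta_D(y))$, and $V^2(\delta_D(z))$ occur with matching powers on both sides and cancel. Using the elementary identity $\alpha\le K\max(a,b)\Longleftrightarrow\min(\alpha/a,\alpha/b)\le K$, the claim then reduces, up to an absolute multiplicative constant, to
\begin{equation*}
\min\left(\frac{U(z-y)\,V^2(r(x,y))}{U(x-y)\,V^2(r(z,y))},\;\frac{U(x-z)\,V^2(r(x,y))}{U(x-y)\,V^2(r(x,z))}\right)\le C.
\end{equation*}

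Since this reduced inequality is symmetric under $x\leftrightarrow y$, I will assume without loss of generality that $|x-z|\ge|z-y|$ and bound the second term in the minimum. The triangle inequality then yields $|x-y|\le 2|x-z|$, so $|x-z|\ge|x-y|/2$; combined with the facts that $U(r)=V^2(r)/r^d$ is radially non-increasing and that $U(r/2)\le 2^{d}U(r)$ (immediate from $V(r/2)\le V(r)$), this gives $U(x-z)\le 2^{d}U(x-y)$.

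For the remaining ratio I will show $r(x,y)\le 2r(x,z)$. The bounds $\delta_D(x)\le r(x,z)$ and $|x-y|\le 2|x-z|\le 2r(x,z)$ are immediate, while $|y-z|\le|x-z|$ together with $\delta_D(y)\le\delta_D(z)+|y-z|$ yields $\delta_D(y)\le r(x,z)+|x-z|\le 2r(x,z)$. Property~\eqref{subadd} then gives $V(r(x,y))\le 2V(r(x,z))$, hence $V^2(r(x,y))\le 4V^2(r(x,z))$. Multiplying this with the previous bound shows the second term in the minimum is at most $2^{d+2}$; the advertised constant $\CII$ then absorbs the comparability constant from Lemma~\ref{GreenEst1}.

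The only subtle step is the initial algebraic reduction: each of the three Green function terms involves three independent $V(\delta_D)$ factors, and one must carefully verify that all such boundary contributions indeed cancel between the two sides of the inequality. Once this is done, the remainder uses only the triangle inequality, the monotonicity of $U$, and the mild subadditivity~\eqref{subadd} of $V$, with no further appeal to the scaling conditions \eqref{eq:assum1}--\eqref{eq:assum2} beyond their use in Lemma~\ref{GreenEst1}.
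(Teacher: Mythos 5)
Your proof is correct and follows essentially the same route as the paper: both substitute the two-sided estimate of Lemma~\ref{GreenEst1}, cancel the boundary $V(\delta_D)$ factors so the claim reduces to showing $\min\bigl(\mathcal{G}(x,z),\mathcal{G}(z,y)\bigr)\le c\,\mathcal{G}(x,y)$ with $\mathcal{G}(x,y)=U(x-y)/V^2(r(x,y))$, then (assuming $|y-z|\le|x-z|$) use the triangle inequality and the monotonicity/subadditivity of $U$ and $V$. The only difference is a slightly tighter bound on $r(x,y)$ (you get $2r(x,z)$ versus the paper's $3r(x,z)$), which just yields a marginally better numerical constant.
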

\begin{proof}
Let $\mathcal{G}(x,y)=U(x,y)/V^2(r(x,y))$. Then,
\begin{equation}\label{GreenCal3G}\mathcal{G}(x,z)\wedge\mathcal{G}(z,y)\leq c(d) \mathcal{G}(x,y).\end{equation}
Indeed, assume that $|y-z|\leq |x-z|$, then $|x-y|\leq 2|x-z|$ and
$$r(x,y)\leq \delta_D(x)+|x-y|\leq 3r(x,z).$$
By monotonicity of $U$, $V$ and \eqref{subadd} we obtain
$$\mathcal{G}(x,z)\leq \frac{U((x-y)/2)}{V^2(r(x,y)/3)}\leq 3^22^d\mathcal{G}(x,y).$$

By Lemma \ref{GreenEst1}, $\mathcal{G}_D(x,y) \approx G_D(x,y) / (V(\delta_D(x)V(\delta_D(y))$. Hence, by \eqref{GreenCal3G},
\begin{align*}
\frac{ G_D(x,z) G_D(z,y)}{ G_D(x,y)}&\approx  V^2(\delta(z))\frac{\mathcal{G}(x,z)\mathcal{G}(z,y)}{\mathcal{G}(x,y)}\leq c V^2(\delta(z)) \(\mathcal{G}(x,z)\vee\mathcal{G}(z,y)\)\\
&\approx V(\delta_D(z))\(\frac{ G_D(x,z)}{V(\delta_D(x))}\vee \frac{ G_D(z,y)}{V(\delta_D(y))}\).
\end{align*}
\end{proof}

The next lemma is crucial in our consideration. The proof is based on the proof of \cite[Lemma 9]{MR2892584}. Nevertheless, we give the details, because here we can see, how the weak scaling condition is used.

\begin{lemma}\label{GDUnif}Let $0 < r_0 < \infty$ and $\diam D\leq r_0$. Then
$G_D(y, z)/[\delta_D(z) \wedge |y-z|]$ is uniformly in $y$ integrable against $|b(z)|dz$.
\end{lemma}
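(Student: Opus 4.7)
I aim to verify uniform-in-$y$ integrability of $z \mapsto G_D(y,z)/(\delta_D(z) \wedge |y-z|)$ against $|b(z)| dz$ by establishing two conditions: (i) $\sup_{y\in D} \int_{B(y,r)\cap D} \tfrac{G_D(y,z)}{\delta_D(z)\wedge|y-z|} |b(z)|\,dz \to 0$ as $r \to 0^+$, and (ii) $\sup_{y\in D} \int_{D \setminus B(y,r)} \tfrac{G_D(y,z)}{\delta_D(z)\wedge|y-z|} |b(z)|\,dz < \infty$ for every fixed $r > 0$. The elementary inequality $\tfrac{1}{a \wedge b} \le \tfrac{1}{a} + \tfrac{1}{b}$ splits the integrand into a $G_D(y,z)/|y-z|$ piece and a $G_D(y,z)/\delta_D(z)$ piece, which I will handle separately.

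The $1/|y-z|$ piece is easy: Lemma \ref{GreenEst1} combined with the trivial bound $V(\delta_D(y))V(\delta_D(z))/V^2(r(y,z)) \le 1$ gives
$$\frac{G_D(y,z)}{|y-z|} \le c\,\frac{V^2(|y-z|)}{|y-z|^{d+1}},$$
which is precisely the kernel appearing in the Kato condition \eqref{eq:Kc}, so (i) for this piece follows directly, and (ii) from the implied local integrability of $|b|$. For the $G_D(y,z)/\delta_D(z)$ piece, I would split the integration into the regions $A = \{\delta_D(z) \ge |y-z|\}$ (where $1/\delta_D(z) \le 1/|y-z|$ reduces to the preceding case) and $B = \{\delta_D(z) < |y-z|\}$. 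On $B$, Lemma~\ref{GreenEst1} together with $V(\delta_D(y)) \le V(r(y,z))$ (since $r(y,z) \ge \delta_D(y)$), $V(r(y,z)) \ge V(|y-z|)$, and the bounded-scale scaling of $V$ from Remark~\ref{remScalExp} applied at exponent $\la_1/2$ (legitimate since $\diam D \le r_0$) yields
$$\frac{G_D(y,z)}{\delta_D(z)} \le c\,\frac{V^2(|y-z|)\,\delta_D(z)^{\la_1/2-1}}{|y-z|^{d+\la_1/2}}.$$

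To integrate this bound against $|b(z)|dz$ uniformly in $y$, I would parametrize $z = z^* + t\nu_{z^*}$ with $z^* \in \partial D$ the closest boundary point and $t = \delta_D(z)$; the $C^{1,1}$ regularity (Lemma~\ref{l:loc}) makes the Jacobian bounded. Fubini produces
$$\int_{\partial D} d\sigma(z^*) \int_0^r t^{\la_1/2-1}\,\frac{V^2(|y-z|)}{|y-z|^{d+\la_1/2}}\,|b(z^*+t\nu_{z^*})|\,dt,$$
which I would split at $t \sim |y-z^*|$: for $t \lesssim |y-z^*|$ one has $|y-z| \approx |y-z^*|$ and $\int_0^{|y-z^*|/2} t^{\la_1/2-1}\,dt \approx |y-z^*|^{\la_1/2}$ converges because $\la_1 > 0$, after which the surviving boundary contribution $\int_{\partial D \cap B(y,2r)} V^2(|y-z^*|)/|y-z^*|^d\,d\sigma(z^*)$ is $O(r^{\la_1-1})$ by $\la_1 > 1$ and the $C^{1,1}$ geometry; for $t \gtrsim |y-z^*|$ the integrand collapses, up to constants, to the Kato-type kernel $V^2(t)/t^{d+1}|b|$ handled by \eqref{eq:Kc}. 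Condition (ii) for this piece is straightforward: on $D \setminus B(y,r)$ one has $G_D(y,z) \le c_r V(\delta_D(z))$, and $V(\delta_D(z))/\delta_D(z)$ is integrable against $|b|dz$ by the local finiteness of $|b|dz$ and $\la_1 > 0$. The main obstacle is that when $1 < \la_1 < 2$ the pointwise bound for $G_D(y,z)/\delta_D(z)$ is \emph{not} dominated by a multiple of the Kato kernel, since $\delta_D(z)^{\la_1/2-1}$ blows up near $\partial D$; the boundary Fubini is what redeems us, because the convergent $t$-integral $\int_0^r t^{\la_1/2-1}\,dt$ absorbs precisely this singularity, and the assumption $\la_1 > 1$ is exactly what is needed for the ensuing boundary integral to produce uniform smallness in $y$.
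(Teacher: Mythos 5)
Your opening reduction via Lemma~\ref{GreenEst1}, the split of $1/(a\wedge b)$ into $1/a + 1/b$, and the treatment of the $G_D(y,z)/|y-z|$ piece via the crude bound $G_D(y,z)\le c\,U(y-z)$ and the Kato condition~\eqref{eq:Kc} are all fine. The pointwise bound you derive on the region $\{\delta_D(z)<|y-z|\}$,
$$
\frac{G_D(y,z)}{\delta_D(z)} \le c\,\frac{V^2(|y-z|)\,\delta_D(z)^{\la_1/2-1}}{|y-z|^{d+\la_1/2}},
$$
is also correct. The problem is what you then do with it.

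Your boundary-layer Fubini drops $|b|$ from the inner integral. After parametrizing $z=z^*+t\nu_{z^*}$, the $t$-integral you must control is $\int_0^{|y-z^*|/2} t^{\la_1/2-1}\,|b(z^*+t\nu_{z^*})|\,dt$, but you compute instead $\int_0^{|y-z^*|/2} t^{\la_1/2-1}\,dt\approx|y-z^*|^{\la_1/2}$, as though $|b|$ were bounded. The Kato class $\pK_d$ contains unbounded functions, and the defining condition~\eqref{eq:Kc} controls $d$-dimensional integrals of $|b|$ against the kernel $V^2(|x-z|)/|x-z|^{d+1}$ over small balls, not line integrals of $|b|$ along normal rays to $\partial D$. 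Such line integrals are simply not controlled by membership in $\pK_d$ (think of $|b|$ with an integrable but unbounded singularity: along rays through the singularity the one-dimensional integral can diverge, and the Fubini-averaged version is exactly the $d$-dimensional integral you started from, so nothing was gained). Your claim that ``the surviving boundary contribution $\int_{\partial D\cap B(y,2r)}V^2(|y-z^*|)/|y-z^*|^d\,d\sigma(z^*)$ is $O(r^{\la_1-1})$'' has already discarded $|b|$ entirely, which is not a legitimate step. The part of your argument for $t\gtrsim|y-z^*|$ (where you correctly fall back on the Kato kernel) is sound, but that does not repair the $t\lesssim|y-z^*|$ regime.

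The paper circumvents precisely this issue by never leaving $d$-dimensional integrals of $|b|$: it dyadically decomposes the near-boundary region into pieces $W^m_{n,k}(y)$ on which both $\delta_D(z)$ and $|y-z|$ are pinned down, covers each such piece by $O((k+1)^{d-2}m^{d-1}2^{n(d-1)})$ balls of radius $\delta_D(y)/(m2^n)$, and invokes the Kato condition (through the derived bound $\int_{B(x,\rho)}|b|\le K_\rho\,\rho/U(\rho)$) on each ball. The geometric series in $n$ converges because $\ua<2$; the series in $k$ converges because of the extra decay in $|y-z|$; and the smallness is extracted from $K_\rho\to 0$. To repair your argument you would need to replace the ray-Fubini step with a genuine covering-by-balls argument of this kind, or otherwise reduce every $|b|$ integral to the form that~\eqref{eq:Kc} controls.
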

\begin{proof}
%We follow the proof of \cite[Lemma 9]{MR2892584}.
By Lemma \ref{GreenEst1}, it is enough to prove the uniform integrability of  $$H(y,z)=U(y-z)\frac{V(\delta_D(y))V(\delta_D(z))}{V^2(r(y,z))}\frac{\delta_D(z)\vee |y-z|}{|y-z|\delta_D(z)}.$$
Let $A_R(y)=\{z \in D \colon H(y,z)>R\}$. We will show that
$$
\lim_{R\to\infty}\sup_{y \in D} \int_{A_R(y)}H(y,z)|b(z)|dz=0.
$$
Let $\lC_2=\lC_2(\diam(D))$ be such that
\begin{equation}
V(\eta r) \le \lC_2 \eta^{\la_1/2} V(r), \qquad \eta<1, \; r< \diam(D)
\label{eq:scalV2v2}
\end{equation}
(see Remark \ref{remScalExp}). We recall that $\la_1>1$. For $r>0$, we denote
\begin{equation}\label{eq:supPot}
K_r=\sup_{x\in \Rd} \int_{B(x,r)}|b(y)|\frac{U(x-y)}{|x-y|}dy\,.
\end{equation}
By (\ref{eq:Kc}), 	$K_r<\infty$ and $K_r \downarrow 0$ as $r \downarrow 0$.
Since $U$ is radial decreasing function, we may denote $U(r) = U(x)$ for all $|x|=r$ and we have
$$
\int_{B(x,r)} |b(z)|\,dz \le \frac{r}{U(r)} \int_{B(x,r)}
\frac{U(x-z)}{|x-z|}|b(z)|\,dz \le K_r \frac{r}{U(r)}\,, \quad x \in \Rd, \;r>0\,.
$$
Let $m \ge 2$ be such that $\delta_D(y)\leq m\delta_D(z)$, then by (\ref{eq:scalV2v2}),
\begin{align}
H(y,z)\frac{|y-z|}{U(y-z)}&\leq \frac{V\(\frac{\delta_D(y)}{r(y,z)}r(y,z)\)V\(\frac{\delta_D(z)}{r(y,z)}r(y,z)\)}{V\(r(y,z)\)^2}\frac{r(y,z)}{\delta_D(z)} \notag\\
\leq\lC_2^2\frac{\delta_D(y)^{\la_1/2}}{r(y,z)^{\la_1-1}\delta_D(z)^{1-\la_1/2}}&\leq\lC_2^2\(\frac{\delta_D(y)}{\delta_D(z)}\)^{1-\la_1/2}\leq  \lC_2^2m^{1-\la_1/2}. \label{eq:Hest}
\end{align}
By (\ref{eq:scalV2v2}), we also have
\begin{equation}
\frac{U(y-z)}{|y-z|} = \frac{V^2(y-z)}{|y-z|^{d+1}} \le \lC_2^2 \frac{|y-z|^{\la_1}}{\diam(D)^{\la_1}} V^2(\diam(D)).
\label{eq:Uest}
\end{equation}
Hence, (\ref{eq:Hest}) yields $A_R(y) \subset \{z \in D \colon |y-z| < cR^{-1/(d+1-\la_1)}\}$, where $c = c(m, (\diam(D), \la_1)$ is some constant.

Let $D_{r}=\{x\in D:\delta_D(x)\geq r\}$. If $R \to \infty$, then uniformly in $y$,
\begin{equation}
\int_{A_R(y)\cap D_{\delta_D(y)/m}}H(y,z)|b(z)|dz\leq \lC_2^{2}m^{1-\la_1/2}K_{c R^{-1/(d+1-\la_1)}}\to 0\label{eq:unif1}.
\end{equation}

For $y\in D$, \ $k,n \ge 0$ and $m \ge 2$, we consider
$$
W^m_{n,k}(y) = \left\{z \in D\colon
\frac{\delta_D(y)}{m2^{n+1}} < \delta_D(z) \le \frac{\delta_D(y)}{m2^{n}}
,\; k < \frac{|y-z|}{\delta_D(y)}\le (k+1)  \right\}.
$$
$W^m_{n,k}(y)$ may be covered by $c_1 (k + 1)^{d-2} m^{d-1}2^{n(d-1)}$ balls of
radii $\frac{\delta_D(y)}{m2^n}$, thus
\begin{align*}
  \int_{W^m_{n,k}(y)}&|b(z)|\,dz \\
  & \le c_1 (k + 1)^{d-2} m^{d-1}2^{n(d-1)} \sup_{x \in \RR^d} \int_{B(x,\delta_D(y)/m2^n)} |b(z)|\,dz \\
  & \le c_1 K_{\delta_D(y)/m2^n} (k + 1)^{d-2} m^{d-1}2^{n(d-1)} \(\frac{\delta_D(y)}{m2^n}\)^{d+1}V^{-2}\(\frac{\delta_D(y)}{m2^n}\)\\
  & = c_1 K_{\delta_D(y)/m2^n} (k + 1)^{d-2} m^{-2}
  2^{-2n}\delta_D(y)^{d+1}V^{-2}\(\frac{\delta_D(y)}{m2^n}\).
\end{align*}
For $z\in W^m_{n,k}(y)$, we have $\delta_D(y)\geq 2\delta_D(z)$, hence $|y-z|\ge \delta_D(y)/2$ and $|y-z|\ge\delta_D(z)$. Therefore,
$$
H(y,z)\leq \frac{V(\delta_D(y))V(\delta_D(z))}{|y-z|^d\delta_D(z)}, \qquad z \in W^m_{n,k}(y),
$$
and we obtain
\begin{align*}
  &\int\limits_{A_R(y) \setminus D_{\delta_D(y)/m}} H(y,z) |b(z)|\,dz
\le \sum_{n=0}^\infty\sum_{k=0}^{\infty}\int\limits_{W^m_{n,k}(y)}\frac{V(\delta_D(y))V(\delta_D(z))}{|y-z|^d\delta_D(z)}|b(z)|\,dz\\
  &\le  \sum_{n=0}^\infty\sum_{k=0}^{\infty}\frac{V(\delta_D(y))V\(\delta_D(y)/(m2^n)\)m2^{n+1}}{\((k+1)\delta_D(y)/2\)^d\delta_D(y)}\int_{W^m_{n,k}(y)} |b(z)|\,dz\\
  & \le c_2K_{\delta_D(y)/m}\sum_{n=0}^\infty\sum_{k=0}^{\infty}(k + 1)^{-2} m^{-1}2^{-n}
  \frac{V(\delta_D(y))}{V\(\frac{\delta_D(y)}{m2^n}\)}\\&\le c_3 K_{\delta_D(y)/m}\sum_{n=0}^\infty\sum_{k=0}^{\infty}(k + 1)^{-2} m^{\ua/2-1}2^{n(\ua/2-1)}\le
  c_4m^{\ua/2-1}K_{\delta_D(y)/m}\,.
\end{align*}
Let $\varepsilon >0$. We chose $m$  and $R$ so large that $c_4m^{\alpha/2-1}K_{\diam(D)/m} <
\varepsilon/2 $ and
$$
\sup_{y \in D}\int_{D_{\delta_D(y)/m} \cap A_R(y)} H(y,z)
|b(z)|\,dz <\varepsilon/2\,.
$$
This completes the proof.
\end{proof}

\begin{lemma}\label{GradGreen}
  If $f \in \pK_d$, then
  \begin{equation}
    \nabla_y\int_D  G_D(y,z)f(z)\,dz = \int_D \nabla_y\, G_D(y,z)f(z)\,dz\,,\quad y \in D\,.
  \end{equation}
\end{lemma}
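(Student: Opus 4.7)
The plan is to establish \eqref{GradGreen} by applying Vitali's convergence theorem to the difference quotients $Q_h(z) := [G_D(y+he, z) - G_D(y, z)]/h$ as $h \to 0$, with $y \in D$ fixed and $e$ a unit coordinate vector. Pointwise convergence $Q_h(z) \to \nabla_x G_D(y, z) \cdot e$ holds for every $z \in D \setminus \{y\}$ by the very definition of the derivative, whose existence is implicit in \eqref{gradEst}. What remains is to verify uniform integrability in $h$ of the family $\{Q_h f\}_{|h|<\rho}$ against Lebesgue measure on $D$, where $\rho := (\delta_D(y) \wedge 1)/4$.

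Set $H(\xi, z) := G_D(\xi, z)/[\delta_D(z) \wedge |\xi - z|]$, the kernel whose uniform integrability against $|b|\,dz$ is the content of Lemma \ref{GDUnif}. Combining \eqref{gradEst} with $\delta_D(y + te) \wedge 1 \ge 3\rho$ for $|t| < \rho$, together with the elementary inequality $|\xi - z| \wedge 3\rho \ge c(\rho, D)(\delta_D(z) \wedge |\xi - z|)$ (immediate from $\delta_D(z) \le \diam(D)$), yields $|\nabla_x G_D(y+te, z)| \le C_1 H(y+te, z)$ for $|t| < \rho$, with $C_1 = C_1(y, D)$. By absolute continuity of $G_D(\cdot, z)$ along the segment $[y, y+he] \subset D$, the fundamental theorem of calculus gives
$$
|Q_h(z)| \le \frac{1}{|h|}\int_0^{|h|} |\nabla_x G_D(y + te, z)|\, dt \le \frac{C_1}{|h|}\int_0^{|h|} H(y + te, z)\, dt,
$$
so that Fubini yields, for every measurable $A \subset D$,
$$
\int_A |Q_h(z)||f(z)|\,dz \le \frac{C_1}{|h|}\int_0^{|h|}\int_A H(y+te,z)|f(z)|\,dz\,dt \le C_1 \sup_{\xi \in D} \int_A H(\xi, z) |f(z)|\,dz.
$$

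By Lemma \ref{GDUnif} (applied with $b := f \in \pK_d$), the right-hand side tends to zero as $|A| \to 0$, uniformly in $h$; this is precisely the uniform integrability required for Vitali's theorem, which then delivers \eqref{GradGreen}. The principal obstacle is controlling the near-diagonal singularity of $\nabla_x G_D$ uniformly in the base point, and Lemma \ref{GDUnif} is tailored exactly for this.
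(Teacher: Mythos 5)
Your argument is correct and follows the same basic strategy as the paper's proof: write the difference quotient as an integral of $\partial_{y_d} G_D$ via the fundamental theorem of calculus, majorize uniformly in $h$, and pass to the limit under the integral sign. The difference lies in the majorant. After applying the gradient bound \eqref{eq:estGradGreena}, the paper's proof bounds $G_D(y+sh_d,z)\le c\,U(y+sh_d-z)$, so the integrand is controlled by $U(\,\cdot\,)/|\,\cdot\,|$, which is exactly the kernel in the definition \eqref{eq:Kc} of $\pK_d$; uniform integrability against $|f|\,dz$ then follows directly from the Kato condition, without invoking Lemma~\ref{GDUnif}. You instead keep $G_D$ in the numerator, swap $\delta_D(y+te)\wedge 1$ for $\delta_D(z)$ at the cost of a constant depending on $\rho$ and $\diam D$ (valid, since $B(y,4\rho)\subset D$ and $\delta_D(z)\le\diam D$), and call on Lemma~\ref{GDUnif}. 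This works, but it is a somewhat heavier tool than necessary. One further point worth a line: Lemma~\ref{GDUnif} is proved in de la Vall\'ee Poussin form (smallness of $\sup_y\int_{\{H(y,\cdot)>R\}}H(y,z)|f(z)|\,dz$ as $R\to\infty$), whereas Vitali's theorem needs $\sup_\xi\int_A H(\xi,z)|f(z)|\,dz\to 0$ as $|A|\to 0$; these are equivalent here because $|f|\mathbbm{1}_D\in L^1(\Rd)$, but the conversion should be stated explicitly.
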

\begin{proof}
Fix $y \in D$ and let $0<h<\delta_D(y)/2$ and $h_d=(0,\ldots,0,h)\in\Rd$.
Then,
\begin{align*}
\frac{| G_D(y+h_d,z)- G_D(y,z)|}{h} &= \frac{1}{h}\left|\int_0^1\frac{d}{ds} G_D(y+s h_d,z) ds \right| \\
 =  \left|\int_0^1\frac{\partial}{\partial y_d} G_D(y+s h_d,z) ds \right|  &\le  c_1 \int_0^1\frac{G_D(y+s h_d,z)}{|y+s h_d-z| \land \delta_D(y+sh_d)} ds \\
& \leq  c_2 \int_0^1\frac{U(y+s h_d,z)}{|y+s h_d-z|} ds\,.
\end{align*}
%\tgm{Czy ostatnia nierówność nie zachzodzi jedynie wtedy, gdy $\delta_D(z) < 1$?}
Since $f\in\pK_d$,  $\frac{U(y+s h_d,z)}{|y+s h_d-z|}$ is uniformly in $h$ integrable on $(0,1) \times D$, which ends the proof (see \cite[Lemma 10]{MR2892584}).
\end{proof}

For $x, y \in D$, we let
\begin{equation}\label{def:kappa}
\kappa(x,y) = \int\limits_D|b(z)|\frac{ G_D(x,z) G_D(z,y)}{ G_D(x,y)(\delta_D(z)\wedge|y-z|)}dz,
\end{equation}
\begin{equation}\label{def:wKappa}
\widehat{\kappa}(x,y) = \int\limits_D|b(z)|\frac{ G_D(x,z) G_D(z,y)(\delta_D(x)\wedge|x-y|)}{ G_D(x,y)(\delta_D(z)\wedge|y-z|)(\delta_D(x)\wedge|x-z|)}dz.
\end{equation}

\begin{lemma}\label{lem:boundKappa}
Let $\lambda <\infty, r < 1$. There is
$\CIII = \CIII(d,\tnu, b,\lambda,r)$
such that if D is $C^{1,1}$,
$\diam(D)/r_0(D) \leq \lambda$ and $\diam(D)\leq r$, then $\kappa(x,y) \leq \CIII$, $\widehat{\kappa}(x,y)\leq 2\CIII$ for $x,y \in D$, and $\CIII(d,\nu,b,\lambda,r) \rightarrow 0$ as $r\rightarrow 0$.
\end{lemma}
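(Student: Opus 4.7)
The strategy follows the approach of \cite{MR2892584}, adapted to the weak-scaling framework of the present paper. The plan is to reduce the ratio $G_D(x,z)G_D(z,y)/G_D(x,y)$ via the $3G$ inequality (Proposition \ref{lem:3G}) and to control the resulting integrals by the Kato-class quantity $K_r$ from \eqref{eq:supPot}, which satisfies $K_r\to 0$ as $r\to 0$ by \eqref{eq:Kc}.

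Applying $3G$ and $\max\le$ sum yields
\begin{align*}
\kappa(x,y) &\le C_2\int_D|b(z)|\frac{V(\delta_D(z))G_D(x,z)}{V(\delta_D(x))(\delta_D(z)\wedge|y-z|)}\,dz \\
&\quad +C_2\int_D|b(z)|\frac{V(\delta_D(z))G_D(z,y)}{V(\delta_D(y))(\delta_D(z)\wedge|y-z|)}\,dz =: I_1+I_2.
\end{align*}
For the ``natural'' integral $I_2$, I use Lemma \ref{GreenEst1} to bound $V(\delta_D(z))G_D(z,y)/V(\delta_D(y))\lesssim V^2(\delta_D(z))U(z-y)/V^2(r(z,y))$ and then the shifted weak scaling \eqref{scalV2R} (here the assumption $\la_1>1$ is essential) to obtain $V^2(\delta_D(z))/V^2(r(z,y))\le c(\delta_D(z)/r(z,y))^{\la_1}$. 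A short case analysis on $\delta_D(z)\le|y-z|$ versus $\delta_D(z)>|y-z|$ then gives the pointwise bound $(\delta_D(z)/r(z,y))^{\la_1}/(\delta_D(z)\wedge|y-z|)\le c/|y-z|$, so $I_2\le c K_{\diam(D)}$. For the asymmetric integral $I_1$, I split $D$ along $|y-z|=|x-z|/2$: on $\{|y-z|\ge|x-z|/2\}$ one has $\delta_D(z)\wedge|y-z|\ge\tfrac12(\delta_D(z)\wedge|x-z|)$, so the argument for $I_2$ applies with the roles of $x$ and $y$ interchanged; on the complementary region $|x-z|\sim|x-y|$ by the triangle inequality, and I combine weak scaling with the pointwise estimate $|b(z)|/|y-z|\le c V^2(|y-z|)|b(z)|/|y-z|^{d+1}$ (valid for $|y-z|<1$ thanks to WUSC for $V$ and $d\ge 2$) to again land in the Kato quantity $K_{\diam(D)}$.

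For the $\widehat\kappa$ bound, I use the triangle-type inequality
\[
\delta_D(x)\wedge|x-y|\le (\delta_D(x)\wedge|x-z|)+(\delta_D(z)\wedge|y-z|),
\]
which I verify by elementary case analysis on $\delta_D(x)\le|x-y|$ and $\delta_D(x)\le|x-z|$. Dividing by the denominator of $\widehat\kappa$ splits $\widehat\kappa(x,y)\le\kappa(x,y)+\kappa'(x,y)$, where $\kappa'$ has the same form as $\kappa$ with $x$ and $y$ exchanged; by the symmetric argument $\kappa'(x,y)\le C_3$, so $\widehat\kappa(x,y)\le 2C_3$. The decay $C_3\to 0$ as $r\to 0$ is inherited directly from $K_r\to 0$. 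The main obstacle is the asymmetric integral $I_1$, whose Green-function factor and denominator are anchored to different base points; the geometric split on $|y-z|$ vs.\ $|x-z|$, combined with the lower-scaling exponent $\la_1>1$, is the key weak-scaling device replacing the exact scaling available for $\alpha$-stable processes.
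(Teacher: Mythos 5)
Your plan is essentially correct and reaches the lemma's conclusion, but the route you take for $\kappa$ is more circuitous than the paper's. After applying the $3G$ inequality you immediately replace the $\max$ by a sum, producing two integrals $I_1,I_2$. The symmetric piece $I_2$ is straightforward because the Green factor and the denominator $\delta_D(z)\wedge|y-z|$ are both anchored to $y$; but $I_1$ mixes a Green factor anchored to $x$ with a denominator anchored to $y$, which is what forces your geometric split on $|y-z|$ versus $|x-z|/2$. The paper avoids this asymmetry entirely by \emph{not} splitting the $\max$: since $U(r)/r$ is decreasing while $\delta_D(z)\wedge r$ is non-decreasing, the two products in the $3G$ bound are anti-monotone with respect to $|x-z|$ versus $|y-z|$, and the paper exploits the resulting identity
\[
\Bigl((\delta_D(z)\wedge|x-z|)\tfrac{U(x-z)}{|x-z|}\Bigr)\vee\Bigl((\delta_D(z)\wedge|y-z|)\tfrac{U(y-z)}{|y-z|}\Bigr)
=(\delta_D(z)\wedge|x-z|\wedge|y-z|)\Bigl(\tfrac{U(x-z)}{|x-z|}\vee\tfrac{U(y-z)}{|y-z|}\Bigr),
\]
after which dividing by $\delta_D(z)\wedge|y-z|\ge\delta_D(z)\wedge|x-z|\wedge|y-z|$ lands directly in the Kato quantity $K_{\diam D}$ with no case analysis. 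Incidentally, on your region $|y-z|<|x-z|/2$ the auxiliary pointwise estimate $1\le cU(y-z)$ is not really needed: since $U$ is non-increasing and $|x-z|>|y-z|$, already $U(x-z)/|y-z|\le U(y-z)/|y-z|$, and the ratio $(\delta_D(z)\wedge|x-z|)/(\delta_D(z)\wedge|y-z|)\le|x-z|/|y-z|$ finishes that case directly; so your argument works, just with unnecessary machinery. Your triangle-type inequality $\delta_D(x)\wedge|x-y|\le(\delta_D(x)\wedge|x-z|)+(\delta_D(z)\wedge|y-z|)$ and the resulting bound $\widehat\kappa\le 2\CIII$ are correct and are exactly the step the paper outsources by citing \cite[Lemma 11]{MR2892584}; you have in effect supplied the omitted derivation.
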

\begin{proof}
%Let $\gamma, \rho \leq 1$. Because of Corollary 3 from BogdanGrzywnyRyznar we can compare h(r) and $\tpsi(\frac{1}{r})$ and $\tpsi \in WLSC(\la_1,1,\lC_1)$ we have
%\begin{equation}
%\frac{V(\gamma\rho)}{V(\rho)} = \sqrt{\frac{h(\rho)}{h(\gamma\rho)}}\leq c_2\sqrt{\frac{\tpsi(\frac{1}{\rho})}{\tpsi(\frac{1}{\gamma\rho})}}\leq \frac{c_2}{\sqrt{\lC_1}}\gamma^{\frac{\la_1}{2}} = \sqrt{C}\gamma^{\frac{\la_1}{2}}.
%\end{equation}

By Lemma \ref{GreenEst1} and (\ref{scalV2}), we have %\approx \frac{V(\delta_D(z))}{V(\delta_D(x))}U(x-z)\frac{V(\delta_D(z))V(\delta_D(x))}{V^2(r(x,z))} \\ &=
\begin{align*}
\frac{V(\delta_D(z))}{V(\delta_D(x))} G_D(x,z) & \approx  \frac{V^2(\delta_D(z))}{V^2(r(x,z)))}U(x-z) \\
&\leq c\left(\frac{\delta_D(z)}{r(x,z)}\right)^{\la_1} U(x-z) \leq C(\delta_D(z) \wedge |x-z|)\frac{U(x-z)}{|x-z|}.
\end{align*}
By Proposition \ref{lem:3G}, we obtain
\begin{multline}\label{eq:ineqGGGUU}
\frac{ G_D(x,z) G_D(z,y)}{ G_D(x,y)} \leq \CII V(\delta_D(z))\left(\frac{ G_D(x,z)}{V(\delta_D(x))} \vee \frac{ G_D(z,y)}{V(\delta_D(y))}\right) \\
\leq c\CII \left((\delta_D(z) \wedge |x-z|)\frac{U(x-z)}{|x-z|}\right) \vee \left((\delta_D(z) \wedge |y-z|)\frac{U(y-z)}{|y-z|}\right) \\
= C\CII (\delta_D(z) \wedge |x-z| \wedge |y-z|) \left( \frac{U(x-z)}{|x-z|} \vee \frac{U(y-z)}{|y-z|} \right). 
\end{multline}
Hence,
$$
\frac{ G_D(x,z) G_D(z,y)}{ G_D(x,y)(\delta_D(z)\wedge |y-z|\wedge |x-z|)}\leq C\CII  \left(\frac{U(x-z)}{|x-z|} \vee \frac{U(y-z)}{|y-z|} \right).
$$
By (\ref{eq:supPot}) and observation that $\lim\limits_{r\rightarrow 0}K_r = 0$, we have the statement for $\kappa$.
The rest of the proof is the same as \cite[Lemma 11]{MR2892584}, so we omit it.
%Moreover this facts imply that $C_1$ as a function of r if going to 0 as r is going to 0.\\
%To estimate $\widehat{\kappa}$ we consider two cases. If $2|x-z| > \delta_D(x) \wedge |x-y|$, then
%$$
%\frac{\delta_D(z) \wedge |x-z| \wedge |y-z|}{(\delta_D(z)\wedge|y-z|)(\delta_D(x)\wedge|x-z|)} \leq \frac{1}{\delta_D(x) \wedge |x-z| } \leq \frac{2}{\delta_D(x) \wedge |x-y|}.
%$$
%If $2|x-z| \leq \delta_D(x) \wedge |x-y|$, then $\delta_D(z) \geq \delta_D(x)/2$, $|y-z| \geq |x-y| / 2$, and so
%$$
%\frac{\delta_D(z) \wedge |x-z| \wedge |y-z|}{(\delta_D(z)\wedge|y-z|)(\delta_D(x)\wedge|x-z|)} \leq \frac{\delta_D(z) \wedge |x-z| \wedge |y-z|}{(\delta_D(x)/2 \wedge |x-y|/2 )|x-z|} \leq \frac{2}{\delta_D(x) \wedge |x-y|}.
%$$
%By equation (\ref{eq:ineqGGGUU}) we obtain $\widehat{\kappa}\leq 2\CIII$. In fact we observe the uniform integrability against $|b(z)|dz$. The above estimates of the factors in (\ref{def:kappa}) and (\ref{def:wKappa}) can depend on D only through d and diam$(D)/r_0(D)$ \cite{MR1991120}. Therefore those integrals are arbitraily small if diam$(D)$ is small enough, and the distortion of D is bounded by a constant. %If diam$(D)$ is not small but finite then we only have the boundedness of $\kappa$ and $\widehat{\kappa}$.
\end{proof}

%Because of \cite[Proposition 6.5]{TKMR} we can observe that
%$$
%|\nabla|G_B(x,y) \leq c\frac{G_B(x,y)}{|x-z|\wedge\delta_D(x)},
%$$
%and because of this we

\subsection{Green function of $\tilde{\mathcal{L}}$}
We will consider analogous objects to the ones considered in the previous section.
We define the Green function and the Green operator
of $\tilde{\mathcal{L}} = \mathcal{L}+b\nabla$ on $D$
\begin{align}\label{eq:12.5t}
&\tG_D(x,y)=\int_0^\infty \tp_D(t,x,y)dt \,, \qquad x,y \in \RR^d\,,\\
&\tG_D \phi(x)=\int_\Rd \tG_D(x,y)\phi(y)dy\,, \qquad \phi \in C^c(\RR^d)\,. \notag
\end{align}
From the properties of  $\tp_D(t,x,y)$ we get that $\tG_D(x,y)=0$ if $x\in D^c$ or $y\in D^c$.
%\textcolor{red}{The next lemmas rely on the definition of $\tp$ and generalize results stated above for L.} The proofs of Lemma~\ref{l:wkp},~\ref{lem:lsgp} and~\ref{lem:tgi} are moved to the Appendix.

By (\ref{ptxy_comp}), we have
$$
\lim_{t \to 0} \frac{\tp(t,x,y)}{t} = \lim_{t \to 0}
\frac{p(t,x,y)}{t} = \nu(y-x)\,.
$$
Thus, the intensity of jumps of the canonical process $X_t$ is the same as $\tX_t$. Accordingly, we obtain the following description.

\begin{lemma}\label{lem:lsgp}
The $\tPP^x$-distribution of $(\tau_D,X_{\tau_D})$ on $(0,\infty)\times (\overline{D})^c$ has density
\begin{equation}
  \label{eq:IWft}
  \int_D \tp_D(u,x,y)\nu(z-y)\,dy\,,\quad u>0\,,\;z \in (\overline{D})^c\,.
\end{equation}
\end{lemma}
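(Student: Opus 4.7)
The plan is to apply the classical Ikeda--Watanabe formula to $\tilde{X}$, exploiting the fact, already highlighted just before the lemma, that the jump intensities of $X$ and $\tilde{X}$ coincide and are given by $\nu(y-x)\,dy$. The drift perturbation $b\cdot\nabla$ only modifies the continuous motion component, not the jump kernel, so the L\'evy system of $\tilde{X}$ under $\tilde{\PP}^x$ should be $(\nu(y-x)\,dy,\,dt)$, exactly as for $X$.

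First I would record the following L\'evy system identity: for every nonnegative Borel function $f$ on $(0,\infty)\times D\times D^c$,
\begin{equation*}
\tEE^x\!\left[\sum_{0<s\le\tau_D} f(s,\tX_{s-},\tX_s)\ind_{\{\tX_{s-}\neq \tX_s\}}\right]
=\tEE^x\!\left[\int_0^{\tau_D}\!\!\int_{\Rd} f(s,\tX_s,y)\,\nu(y-\tX_s)\,dy\,ds\right].
\end{equation*}
This is a standard consequence, via Dynkin's formula applied to smooth test functions, of the identity $\lim_{t\downarrow 0}\tp(t,x,y)/t=\nu(y-x)$ together with the fact that $\tp$ is the fundamental solution of $\partial_t+\tL$ (Lemma \ref{l:wkp}). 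Since $D$ is a $C^{1,1}$ open set and $\tilde{\PP}^x(\tX_{\tau_D}\in\partial D)=0$ (the same Blumenthal-type argument as for $X$ applies because the jump measure is the same), the process leaves $D$ exclusively by a jump from $D$ to $(\overline{D})^c$, so on the event $\{\tau_D<\infty\}$ we have $\tX_{\tau_D-}\in D$ and $\tX_{\tau_D}\in(\overline{D})^c$.

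Second, specializing to $f(s,w,y)=g(s)\ind_A(y)$ with $A\subset(\overline{D})^c$ and $g\ge 0$, the left-hand side collapses to $\tEE^x[g(\tau_D)\ind_A(\tX_{\tau_D})]$, while the right-hand side, by Fubini and the definition of $\tp_D$ via the killed semigroup
$$\tEE^x[h(\tX_s);\,s<\tau_D]=\int_D \tp_D(s,x,y)h(y)\,dy,$$
equals
\begin{equation*}
\int_0^{\infty} g(s)\int_A \left(\int_D \tp_D(s,x,y)\,\nu(z-y)\,dy\right)dz\,ds.
\end{equation*}
By a monotone class argument this extends to arbitrary nonnegative Borel $f$ on $(0,\infty)\times(\overline{D})^c$, which is exactly the assertion that the density of the joint distribution of $(\tau_D,\tX_{\tau_D})$ is $\int_D \tp_D(u,x,y)\nu(z-y)\,dy$.

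The main obstacle is the rigorous justification of the L\'evy system identity for $\tilde{X}$. The cleanest route is to follow the usual proof: for any $\phi\in C_c^\infty((0,\infty)\times D)$ one has by Lemma \ref{l:wkp} that $M^\phi_t:=\phi(t,\tX_t)-\phi(0,x)-\int_0^t(\partial_u+\tL)\phi(u,\tX_u)\,du$ is a $\tilde{\PP}^x$-martingale, and the integral part of $\tL$ produces exactly the jump kernel $\nu(y-\tX_s)\,dy$; a standard approximation (extending from smooth $\phi$ to $f$ supported away from the diagonal) yields the required formula. Once this is in place, the remainder is bookkeeping.
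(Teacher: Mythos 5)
Your approach matches the paper's (implicit) one: the paper gives no formal proof, only the observation preceding the lemma that $\lim_{t\downarrow 0}\tp(t,x,y)/t=\nu(y-x)$, so the jump intensity of $\tilde X$ coincides with that of $X$, and then states the Ikeda--Watanabe description as a consequence. Your proposal fleshes this out via the L\'evy system identity, which is exactly the standard route and is consistent with the paper's reasoning.

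One remark worth flagging: you invoke $\tilde\PP^x(\tX_{\tau_D}\in\partial D)=0$ as part of the bookkeeping, attributing it to ``the same Blumenthal-type argument as for $X$.'' In the paper that statement is \emph{Lemma~\ref{l:nu}}, which is proved \emph{after} Lemma~\ref{lem:lsgp} and actually \emph{uses} it (via the comparability of Poisson kernels). Since $\tilde X$ is not symmetric, the direct argument cited for $X$ does not transfer immediately, so relying on it here risks circularity. Fortunately your proof does not really need that fact: with the test function $f(s,w,y)=g(s)\ind_A(y)$, $A\subset(\overline D)^c$, the terms in the sum with $s<\tau_D$ vanish automatically because $\tX_s\in D$ there, and at $s=\tau_D$ the jump indicator $\ind\{\tX_{\tau_D-}\neq\tX_{\tau_D}\}$ is automatic since $\tX_{\tau_D-}\in\overline D$ while $\tX_{\tau_D}\in(\overline D)^c$. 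So the left-hand side collapses to $\tEE^x[g(\tau_D)\ind_A(\tX_{\tau_D})]$ without any appeal to boundary non-hitting. I would simply delete that sentence to avoid the appearance of circular dependence on Lemma~\ref{l:nu}. The rest of the argument (the Fubini step, the identification of the inner integral with $\int_D\tp_D(u,x,y)\nu(z-y)\,dy$ via the killed semigroup, and the monotone class extension) is correct.
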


\noindent We define the Poisson kernel of $D$ for $\tilde{\mathcal{L}}$,
\begin{equation}\label{eq:djpt}
\tilde{P}_D(x,y)=\int_D \tG_D(x,z)\nu(y-z)\,dz\,,\quad x\in D\,,\;y\in D^c\,.
\end{equation}
By (\ref{eq:12.5t}), (\ref{eq:djpt}) and (\ref{eq:IWft}),  we have
\begin{equation}\label{eq:IWt}
\tPP^x(X_{\tau_D} \in A) 	 =\int_A \tP_D(x,y)dy\,,
\end{equation}
if $A \subset (\bar{D})^c$. For the case of $A\subset \partial D$, we refer the reader to Lemma~\ref{l:nu}.

%The following rough estimate of $\tG_D$ results from the estimates of $\tp_{\textcolor{red}{D}}$
%and the fact that $X$ jumps out of $D$ at least with intensity
%$\int_{|y|>\diam(D)} \nu(y)dy>0$.

\begin{lemma}\label{lem:GLDupper}
$\tG_D(x,y)$ is continuous for $x\neq y$, $\tG_D(x,x) = \infty$ for $x\in D$, and
$$
\tG_D(x,y)\leq \CXXI U(x-y),\textsf{~~~ $x, y \in \mathbb{R}^d$},
$$
where $\CXXI = \CXXI (d,b,\diam(D))$.
\end{lemma}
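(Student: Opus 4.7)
My plan is to prove the three assertions in order: continuity of $\tG_D$ off the diagonal, the upper bound $\tG_D(x,y)\le \CXXI U(x-y)$, and $\tG_D(x,x)=\infty$. All three are reduced to estimates of the time integral $\tG_D(x,y)=\int_0^\infty \tp_D(t,x,y)\,dt$ split at $t=1$.

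For the upper bound I use \eqref{ptxy_comp} on $(0,1]$ to write $\tp_D(t,x,y)\le c_1 p(t,x-y)$, whence
\[
\int_0^1 \tp_D(t,x,y)\,dt \le c_1\int_0^1 p(t,x-y)\,dt \le c_1 G(x-y)\le c_2 U(x-y),
\]
invoking $G\approx U$ recalled in Section~3.1. For $t\ge 1$, Chapman--Kolmogorov together with $\tp_D\le\tp$ gives $\tp_D(t,x,y)\le M\,\tPP^x(\ttau_D>t-1)$, where $M=\sup_{z,w\in\Rd}\tp(1,z,w)\le c\,p(1,0)<\infty$ by \eqref{ptxy_comp} and the radial monotonicity of $p_1$. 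Integrating yields $\int_1^\infty\tp_D(t,x,y)\,dt\le M\,\tEE^x\ttau_D$, and $\sup_{x\in D}\tEE^x\ttau_D<\infty$ follows by iterating $\sup_{x\in D}\tPP^x(\ttau_D>T_0)<1$ for some $T_0$, which in turn is a consequence of $\tPP^x(\ttau_D\le T_0)\ge\int_{D^c}\tp(T_0,x,y)\,dy$, of \eqref{ptxy_comp}, and of the fact that $|D|\,p(T_0,0)<1$ for $T_0$ large enough. Since $|x-y|\le\diam(D)$ and $U$ is non-increasing, $U(x-y)\ge U(\diam(D))>0$, so the additive tail is absorbed into a constant multiple of $U(x-y)$, yielding $\CXXI=\CXXI(d,b,\diam(D))$.

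Continuity of $\tG_D$ off the diagonal follows by dominated convergence. Lemma~\ref{lem:DhkCon} gives continuity of $\tp_D(t,\cdot,\cdot)$ on $D\times D$ for each $t>0$. Near a fixed $(x_0,y_0)\in D\times D$ with $x_0\ne y_0$ I dominate $\tp_D(t,x,y)$ uniformly on a small neighborhood: on $(0,1]$ by $c_1 p(t,x-y)\le c_3 t$, using $p_t(z)\lesssim t/(V^2(|z|)|z|^d)$ with $|x-y|$ bounded away from $0$, and on $[1,\infty)$ by the integrable $M\,\tPP^x(\ttau_D>t-1)$ controlled as above. For the diagonal singularity, fix $x\in D$, choose $r>0$ with $B:=B(x,r)\sub D$, so $\tp_D\ge\tp_B$. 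Hunt's formula \eqref{eq:Hunt} combined with Lemma~\ref{sup-p-t1} applied to the killing term (using $|X_{\ttau_B}-x|\ge r$) gives $\tp_B(t,x,x)\ge \tp(t,x,x)-M_r$ for all $t>0$. By \eqref{ptxy_comp} and \eqref{p_t_L}, $\tp(t,x,x)\ge c_1^{-1}p(t,x,x)\approx c_1^{-1}[V^{-1}(\sqrt t)]^{-d}\to\infty$ as $t\to 0^+$, so for small $t$, $\tp_B(t,x,x)\ge \tfrac12 c_1^{-1}p(t,x,x)$. The weak lower scaling of $V$ of index $\la/2$, see \eqref{scalV1}, combined with $\la\le\ua<2\le d$ gives $[V^{-1}(\sqrt t)]^{-d}\ge c\,t^{-d/\la}$ near $0$, and $d/\la>1$ makes this non-integrable. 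Hence $\int_0^\varepsilon \tp_B(t,x,x)\,dt=\infty$, so $\tG_D(x,x)=\infty$.

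The main obstacle is the large-time part of the upper bound: producing the uniform bound on $\tEE^x\ttau_D$ and carefully tracking the constants through \eqref{ptxy_comp} and the mass-escape argument so that $\CXXI$ depends only on $d$, $b$, and $\diam(D)$, as asserted.
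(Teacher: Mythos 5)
Your overall plan is sound and matches what one would expect from the omitted reference proof (the paper just points to \cite[Lemma 7]{MR2892584}): split $\int_0^\infty\tp_D\,dt$ at $t=1$, use \eqref{ptxy_comp} and $G\approx U$ for small times, bound the tail via Chapman--Kolmogorov and $\sup_x\tEE^x\ttau_D<\infty$, get interior continuity by dominated convergence using Lemma~\ref{lem:DhkCon}, and force the diagonal singularity by comparing $\tp_B(t,x,x)$ with $\tp(t,x,x)$.

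One concrete step is stated backwards. You invoke the weak \emph{lower} scaling of $V$ to claim $[V^{-1}(\sqrt t)]^{-d}\ge c\,t^{-d/\la}$. That inequality actually goes the other way: from $V(\lambda r)\ge c\lambda^{\la/2}V(r)$ one deduces $V^{-1}(\sqrt t)\ge c\,t^{1/\la}$ for small $t$, hence $[V^{-1}(\sqrt t)]^{-d}\le c\,t^{-d/\la}$, an \emph{upper} bound. The lower bound you need comes instead from the weak \emph{upper} scaling of $V$ (index $\ua/2$, see \eqref{scalV1}): $V(s)\gtrsim s^{\ua/2}$ for small $s$ gives $V^{-1}(\sqrt t)\lesssim t^{1/\ua}$, hence $[V^{-1}(\sqrt t)]^{-d}\gtrsim t^{-d/\ua}$. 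Since $\ua<2\le d$, one still has $d/\ua>1$ and non-integrability of the small-time singularity, so the conclusion $\tG_D(x,x)=\infty$ stands, but the scaling index used and the direction of the inequality should be corrected. A minor further remark: your continuity argument handles only interior $(x_0,y_0)\in D\times D$; continuity as one coordinate approaches $\partial D$ (where $\tG_D\to 0$) needs a short additional remark, e.g.\ via the bound $\tG_D\le\CXXI U$ near $\partial D$ together with the small-set comparison in Lemma~\ref{Theorem1s}, but this is routine.
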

\noindent Since the proof is the same as the proof of \cite[Lemma 7]{MR2892584}, we omit it.

For $x\neq y$, we let
$$ G_1(x,y) = \int_DG_D(x,z)b(z)\cdot\nabla_z G_D(z,y)dz.$$
By Lemma \ref{lem:boundKappa},
\begin{equation}\label{kxyEstimates}
| G_1(x,y)|\leq \Cg G_D(x,y)\int_D\frac{|b(z)| G_D(x,z) G_D(z,y)}{ G_D(x,y)(\delta_D(z)\wedge|y-z|)}dz \leq
\Cg \CIII G(x,y).
\end{equation}
For $f\in\pK_d$, we have
\begin{align*}
&\int_D G_D(x,y)\int_D|b(z)|\frac{ G_D(x,z) G_D(z,y)}{ G_D(x,y)(\delta_D(z)\wedge|y-z|)}dz|f(y)|dy \\
& \leq \CIII\int_D G_D(x,y)|f(y)|dy < \infty.
\end{align*}
Hence, by Lemma \ref{GradGreen}, (\ref{eq:estGradGreena}) and Fubini's theorem,
$$
 G_Db\nabla  G_D f (x) = \int_D G_D(x,z)\int_Db(z)\cdot\nabla G_D(z,y)f(y)dydz \\ = \int_D G_1(x,y)f(y)dy.
$$
We like to note that linear map $f \mapsto b\nabla G_D f$ preserves $\pK_d$ because $\nabla G_D f$ is a bounded function, see Lemma \ref{GDUnif} for $b$ equals $f$. 

The next lemma results from integrating (\ref{tp_D_fs}) against time.
\begin{lemma}\label{lem:tgi}
For all $\varphi \in C_c^\infty(D)$ and $x\in D$, we have
\begin{equation}\label{tG_D_fs}
\int_{D} \tG_D(x,z) \tL \varphi(z) \,dz = \int_{D} \tG_D(x,z) \left(\mathcal{L}\varphi(z) + b(z)\cdot\nabla\varphi(z)\right) \,dz = - \varphi(x)\,.
\end{equation}
\end{lemma}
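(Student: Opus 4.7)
The plan is to derive the stationary identity (\ref{tG_D_fs}) from the parabolic Duhamel formula (\ref{tp_D_fs}) by a limiting argument in the time variable. Fix $x\in D$, $\varphi\in C_c^\infty(D)$, and some $s_0>0$. For each integer $n\ge 1$ I would construct $\eta_n\in C_c^\infty((0,\infty))$ which is equal to $1$ on $[s_0/2, n]$, vanishes outside $(s_0/4, n+1)$, and whose derivative is bounded by a constant independent of $n$ (the transition regions $[s_0/4, s_0/2]$ and $[n, n+1]$ have fixed length). Plugging $\phi_n(u,z):=\eta_n(u)\varphi(z)\in C_c^\infty((0,\infty)\times D)$ into (\ref{tp_D_fs}) with $s=s_0$, using $\eta_n(s_0)=1$, and substituting $t=u-s_0$ yields
\[
\int_0^\infty\!\!\int_D \tp_D(t,x,z)\,\eta_n'(t+s_0)\,\varphi(z)\,dz\,dt
+\int_0^\infty\!\!\int_D \tp_D(t,x,z)\,\eta_n(t+s_0)\,\tL\varphi(z)\,dz\,dt
= -\varphi(x).
\]

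For the second integral I would apply dominated convergence as $n\to\infty$. The integrand converges pointwise to $\tp_D(t,x,z)\tL\varphi(z)$, since $\eta_n(t+s_0)\to 1$ for every $t>0$. For the dominant, note that $\mathcal{L}\varphi$ is bounded by a standard Lévy-generator estimate (splitting $\nu$ at $|z|=1$ and using $\int(1\wedge|z|^2)\nu(dz)<\infty$), while $|b\cdot\nabla\varphi|\le \|\nabla\varphi\|_\infty|b|\mathbf{1}_K$ with $K=\supp\varphi\Subset D$. Time-integration against $\tp_D$ gives
\[
\int_0^\infty\!\!\int_D \tp_D(t,x,z)\bigl(\|\mathcal{L}\varphi\|_\infty + \|\nabla\varphi\|_\infty |b(z)|\mathbf{1}_K(z)\bigr)dz\,dt
\le \|\mathcal{L}\varphi\|_\infty\,\tEE^x\tilde\tau_D + C\!\int_D \tG_D(x,z)|b(z)|\,dz,
\]
which is finite: the first term by $\tEE^x\tilde\tau_D<\infty$ for bounded $D$, the second by Lemma \ref{lem:GLDupper} ($\tG_D\le \CXXI U$) combined with the Kato-class condition on $b$. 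Hence the second integral converges to $\int_D \tG_D(x,z)\tL\varphi(z)\,dz$.

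The delicate step, which I expect to be the main technical point, is showing the first integral vanishes as $n\to\infty$. By construction $\eta_n'(t+s_0)\ne 0$ only when $t+s_0\in[s_0/4,s_0/2]\cup[n,n+1]$; the first set contributes nothing since it lies in $t<0$, so the whole integral reduces to
\[
\int_{n-s_0}^{n-s_0+1} \eta_n'(t+s_0)\Bigl[\int_D \tp_D(t,x,z)\varphi(z)\,dz\Bigr]dt,
\]
which is bounded in absolute value by $C\|\varphi\|_\infty\sup_{t\ge n-s_0}\tPP^x(\tilde\tau_D>t)$. For bounded $D$ we have $\tEE^x\tilde\tau_D<\infty$ (via comparability $\tp\approx p$ on compact time intervals and the fact that $X$ exits any ball containing $D$ in bounded time with positive probability, iterated by the Markov property), so $\int_0^\infty \tPP^x(\tilde\tau_D>t)\,dt<\infty$ and therefore $\tPP^x(\tilde\tau_D>t)\to 0$ as $t\to\infty$. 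Combining the two limits yields $\int_D \tG_D(x,z)\tL\varphi(z)\,dz = -\varphi(x)$, which is the claimed identity; the middle equality in (\ref{tG_D_fs}) is just the definition $\tL\varphi = \mathcal{L}\varphi + b\cdot\nabla\varphi$.
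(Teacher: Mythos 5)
Your proof is correct and follows exactly the approach the paper indicates — integrating the parabolic Duhamel identity \eqref{tp_D_fs} against time — the paper states this in a single sentence, while you supply the cutoff functions $\eta_n$ in time, the dominated-convergence argument (using $\tG_D \le \CXXI U$ and the Kato condition), and the tail estimate $\tPP^x(\tilde\tau_D > t)\to 0$ needed to make it rigorous.
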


\noindent For every $x\in D$, let us define the function
\begin{equation}\label{l:pf}
f_x(y) = \tG_D(x,y)-G_D(x,y)-\int_D\tG(x,z)b(z)\cdot\nabla_zG_D(z,y)dz.
\end{equation}
We can notice that $f_x(y) = 0$ for $y\in \overline{D}^c$.

\begin{lemma}
$f_x(y)$ is well defined on $\mathbb{R}^d\setminus\{x\}$, integrable on $\mathbb{R}^d$ and bounded on $\mathbb{R}^d\setminus B(x,r)$ for $0<r$.
\end{lemma}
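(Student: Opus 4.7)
The function $f_x$ vanishes on $\overline D^c$: for $y\in\overline D^c$ one has $G_D(\cdot,y)\equiv 0$, hence also $\nabla_z G_D(z,y)\equiv 0$, and combined with $\tilde G_D(x,y)=0$ this gives $f_x(y)=0$. Thus the three claims reduce to $y\in D\setminus\{x\}$ and to the analysis of
$$h_x(y):=\int_D \tilde G_D(x,z)\,b(z)\cdot\nabla_z G_D(z,y)\,dz.$$
For pointwise well-definedness at a fixed $y\in D\setminus\{x\}$, I would combine Lemma \ref{lem:GLDupper} (giving $\tilde G_D(x,z)\le \CXXI U(x-z)$), the gradient estimate \eqref{gradEst}, and $G_D(z,y)\le cU(z-y)$ from Lemma \ref{GreenEst1} to obtain
$$|h_x(y)|\le C\int_D U(x-z)\,|b(z)|\,\frac{U(z-y)}{\delta_D(z)\wedge|z-y|\wedge1}\,dz,$$
then split $D$ into balls around $x$ and $y$ of radius $|x-y|/4$ plus the bulk. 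In the near-$x$ region $U(z-y)/(\delta_D(z)\wedge|z-y|)$ is bounded and the Kato condition \eqref{eq:Kc}, via \eqref{eq:supPot}, makes $\int U(x-z)|b(z)|dz$ locally finite; symmetrically for the near-$y$ region; and on the bulk both $U$-factors are bounded and Lemma \ref{GDUnif} yields a uniform estimate.

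For boundedness on $\mathbb R^d\setminus B(x,r)$, take $|y-x|\ge r$ and split $D$ at $|x-z|=r/2$. On $\{|x-z|\ge r/2\}$ we have $U(x-z)\le U(r/2)$, so Lemma \ref{GDUnif} gives the required bound uniformly in $y$. On $\{|x-z|<r/2\}$ we get $|z-y|\ge r/2$, so $U(z-y)$ is bounded; the quotient $G_D(z,y)/(\delta_D(z)\wedge|z-y|)$ is controlled through Lemma \ref{GreenEst1} and the scaling \eqref{scalV2}, reducing the estimate to the Kato quantity $K_{r/2}$ from \eqref{eq:supPot}, again uniform in $y$.

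For $L^1$-integrability, using $f_x\equiv 0$ outside $D$,
$$\int_{\mathbb R^d}|f_x(y)|\,dy\le \int_D\tilde G_D(x,y)\,dy+\int_D G_D(x,y)\,dy+\int_D|h_x(y)|\,dy,$$
and the first two terms equal the finite exit-time expectations $\tilde{\mathbb E}^x\tilde\tau_D$ and $\mathbb E^x\tau_D$. For the third, split at $|y-x|=r_0$ with $r_0<\delta_D(x)/4$: the preceding boundedness step handles $\{|y-x|\ge r_0\}$, while on $\{|y-x|<r_0\}$ I apply Fubini and estimate $\int_{B(x,r_0)}|\nabla_z G_D(z,y)|\,dy$ separately for $z$ near $x$ (where $\delta_D(z)$ is bounded below and the integrability of $V^2(r)/r^2$ at the origin, guaranteed by $\la_1>1$, suffices) and for $z$ far from $x$ (where $|z-y|$ is bounded below and $U(x-z)$ is bounded), concluding with Lemma \ref{GDUnif}.

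The principal obstacle is the integrability step. A naive application of Fubini fails because $\int_D|\nabla_z G_D(z,y)|\,dy$ is not uniformly bounded in $z$ — it blows up as $\delta_D(z)\to 0$ — and the upper bound $\tilde G_D(x,z)\le \CXXI U(x-z)$ lacks the boundary decay factor $V(\delta_D(z))$ that would offset this singularity. The rescue is twofold: the weak scaling hypothesis $\la_1>1$ makes $V^2(r)/r^2$ integrable at zero, handling the near-diagonal singularity, while Lemma \ref{GDUnif} (uniform integrability of $G_D/(\delta_D\wedge|\cdot|)$ against the Kato measure $|b|dz$) absorbs the boundary singularity that remains.
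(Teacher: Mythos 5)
Your proposal is essentially correct and follows the paper's own line of argument: both reduce the claim to the integral term $h_x$, both use the bound $\tilde G_D(x,z)\le \CXXI U(x-z)$ from Lemma~\ref{lem:GLDupper} together with the gradient estimate, Lemma~\ref{GDUnif}, and the Kato condition, splitting $D$ into a neighborhood of $x$ plus the remainder, and both invoke the scaling hypothesis $\la_1>1$ to make $U(w)/|w|=V^2(|w|)/|w|^{d+1}$ integrable near the origin in the $L^1$ step. One small inconsistency in your write-up: after replacing $G_D(z,y)$ by $cU(z-y)$ in your displayed bound you still invoke Lemma~\ref{GDUnif} on the bulk, but that lemma concerns $G_D(z,y)/(\delta_D(z)\wedge|z-y|)$, not $U(z-y)/(\delta_D(z)\wedge|z-y|)$, and the latter has a genuine non-integrable boundary singularity near $\partial D$; the paper avoids this by keeping $G_D(z,y)$ in the estimate \eqref{Eq:fxWD}, and you should do the same.
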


\begin{proof}
Let us fix $y\neq x$ % $0 < r < \frac{\delta_D(x)}{3}$
and $0 < \rho \leq \min\{\frac{|x-y|}{2},\frac{\delta_D(x)}{2}\}$.  By Lemma \ref{lem:GLDupper} and (\ref{eq:estGradGreena})
\begin{equation}\label{Eq:fxWD}
\int_D|\tG_D(x,z)b(z)\cdot\nabla_zG_D(z,y)|dz \leq C_4\Cg\int_D U(x-z)|b(z)|\frac{|G_D(z,y)|}{\delta_D(z)\wedge|z-y|}dz.
\end{equation}
Let $D = D_1 \cup D_2$, where $D_1 = B(x,\rho/2)^c\cap D$ and $D_2 =  B(x,\rho/2)$. By monotonicity of $U$ and  Lemma \ref{GDUnif},
\begin{multline}\label{Eq:Grho}
\int_{D_1} U(x-z)|b(z)|\frac{|G_D(z,y)|}{\delta_D(z)\wedge|z-y|}dz\\ \leq U\left(\frac{\rho}{2}\right)\int_{D}|b(z)|\frac{|G_D(z,y)|}{\delta_D(z)\wedge|z-y|}dz \leq c_1 U\left(\frac{\rho}{2}\right),
\end{multline}
for every $y\in D$. Since $b\in\pK_d$,
\begin{equation}\label{Eq:UGrho}
\int_{D_2} U(x-z)|b(z)|\frac{G_D(z,y)}{\delta_D(z)\wedge|z-y|}dz \leq \frac{\CXXI U(\rho)}{\rho}\int_{D}U(x-z)|b(z)|dz \leq c_2\frac{U(\rho)}{\rho}.
\end{equation}
It implies that (\ref{Eq:fxWD}) is finite for every $y\neq x$ and bounded on $\mathbb{R}^d\setminus B(x,r)$ for every $r>0$.

It remains to show the integrability of $f_x$.
Let $r=\frac{\delta_D(x)}{4}$ and $B=B(x,2r)$. We put $M_r = (2c_1U(\frac{r}{2})+c_2\frac{U(r)}{r})|D|$. By (\ref{Eq:Grho}) and (\ref{Eq:UGrho}),
%
%Now let $y \in D\cap B^c$ where $B=B(x,r)$. Since $x\neq y$ it is enough to prove that
%$$
%\int_{D}U(x,z)|b(z)||\nabla_zG_D(z,y)|dz \leq M_r < \infty.
%$$
%It is the same way to show it like in (\ref{Eq:Grho}) and (\ref{Eq:UGrho}) with $\rho = \frac{r}{2}$. Then $M_r \leq C^1_{\frac{r}{2}} + C^2_{\frac{r}{2}}$.
%
\begin{align*}
&\int_D \int_D|\tG_D(x,z)b(z)\cdot\nabla_zG_D(z,y)|dz dy\\
&\leq M_r + \int_B \int_B|\tG_D(x,z)b(z)\cdot\nabla_zG_D(z,y)|dz dy \\
&\leq M_r +  \Cg^2\int_B\int_{B}U(x-z)|b(z)|\frac{G_D(z,y)}{\delta_D(z)\wedge|z-y|}dzdy\\
& \leq  M_r + \Cg^2\int_B\int_B U(x-z)|b(z)|\frac{U(z-y)}{|z-y|}dzdy \\
&\leq M_r +  \Cg^2\int_{D}U(x-z)|b(z)|\int_{B(z,\diam(D))}\frac{U(z-y)}{|z-y|}dydz \\ 
& \leq M_r +  c_D\int_{D}U(x-z)|b(z)|dz\,,	
\end{align*}

which is finite since $b\in\pK_d$.
\end{proof}

\begin{theorem}\label{lem:pf}
Let $x,y\in\mathbb{R}^d$, $x\neq y$. We have
\begin{equation}\label{eq:wp}
\tG_D(x,y) = G_D(x,y)+\int_D\tG_D(x,z)b(z)\cdot\nabla_zG_D(z,y)dz.
\end{equation}
\end{theorem}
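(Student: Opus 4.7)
The plan is to prove the formula by showing that $f_x$, as defined in the excerpt just before the theorem, is identically zero on $\mathbb{R}^d\setminus\{x\}$. The preceding lemma already gives that $f_x$ is integrable on $\mathbb{R}^d$ and vanishes on $\overline{D}^c$, so the strategy is to show $f_x$ is weakly $\mathcal{L}$-harmonic on $D$ and then conclude by a uniqueness argument.

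The core computation is to test against $\varphi \in C_c^\infty(D)$ and show $\int_D f_x(y)\mathcal{L}\varphi(y)\,dy = 0$. For the first term of $f_x$, since $\tL = \mathcal{L} + b\cdot\nabla$, Lemma \ref{lem:tgi} gives
$$\int_D \tG_D(x,y)\mathcal{L}\varphi(y)\,dy = -\varphi(x) - \int_D \tG_D(x,y)\,b(y)\cdot\nabla\varphi(y)\,dy.$$
For the second term, (\ref{eq:fg}) gives directly $\int_D G_D(x,y)\mathcal{L}\varphi(y)\,dy = -\varphi(x)$. For the third term, I will apply Fubini (justified by the $L^\infty$-boundedness of $\mathcal{L}\varphi$ and the integrability estimates already used in the preceding lemma), then use Lemma \ref{GradGreen} — valid because $\mathcal{L}\varphi$ is bounded and thus belongs to $\pK_d$ — to differentiate (\ref{eq:fg}) in the source variable, obtaining $\int_D \nabla_z G_D(z,y)\mathcal{L}\varphi(y)\,dy = -\nabla\varphi(z)$. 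This yields
$$\int_D\!\!\int_D \tG_D(x,z)\,b(z)\cdot\nabla_z G_D(z,y)\,dz\,\mathcal{L}\varphi(y)\,dy = -\int_D \tG_D(x,z)\,b(z)\cdot\nabla\varphi(z)\,dz.$$
Adding the three contributions with appropriate signs, everything cancels and $\int_D f_x\,\mathcal{L}\varphi\,dy = 0$ for every $\varphi \in C_c^\infty(D)$.

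Since $\mathcal{L}$ is symmetric (the underlying process $X$ is isotropic), this is equivalent to saying $\mathcal{L} f_x = 0$ on $D$ in the distributional sense. Combined with $f_x \equiv 0$ on $\overline{D}^c$, uniqueness for the Dirichlet problem associated to $\mathcal{L}$ yields $f_x \equiv 0$ on $D$. To make the uniqueness rigorous, for any $h \in C_c^\infty(D)$ set $\phi = G_D h$, so that $-\mathcal{L}\phi = h$ on $D$; approximating $\phi$ by $\phi_n \in C_c^\infty(D)$ with $\mathcal{L}\phi_n \to -h$ in a suitable sense and applying the vanishing integral identity above gives $\int_D f_x h\,dy = 0$, hence $f_x = 0$ a.e. on $D$, and then everywhere on $D\setminus\{x\}$ by continuity of $\tG_D$ and $G_D$ off the diagonal.

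The main obstacle is the last step: the approximation of $G_D h$ by test functions in $C_c^\infty(D)$, because $G_D h$ is not smooth across $\partial D$ and not compactly supported in $D$. One natural alternative is to bypass this by a mean-value/martingale argument — using that $f_x$ is integrable and weakly $\mathcal{L}$-harmonic on $D$, the killed semigroup gives $f_x(x) = \mathbb{E}^x[f_x(X_{\tau_D})] = 0$ since $f_x$ vanishes on $D^c$, which requires only the continuity properties of $f_x$ on $D\setminus\{x\}$ already established. A subsidiary technical point is to verify all Fubini interchanges and the differentiation-under-the-integral step via the quantitative bounds from Lemmas \ref{GDUnif}--\ref{GradGreen} and the preceding integrability lemma.
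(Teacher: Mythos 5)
Your core computation is correct and matches the spirit of the paper's proof: testing $f_x$ against $\mathcal{L}\varphi$ for $\varphi\in C_c^\infty(D)$ using Lemma~\ref{lem:tgi}, \eqref{eq:fg}, Fubini and Lemma~\ref{GradGreen}, the three contributions cancel, so $\int_D f_x\,\mathcal{L}\varphi\,dy=0$. This is precisely the weak harmonicity that the paper's proof invokes tersely via ``By Lemma~\ref{lem:tgi}.''

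The genuine gap is the closing step. You correctly identify that the approximation of $G_Dh$ by $C_c^\infty(D)$ functions fails ($G_Dh$ is neither smooth across $\partial D$ nor compactly supported in $D$, and the killed and free generators disagree there). But your proposed fallback --- passing directly from ``$\int_D f_x\,\mathcal{L}\varphi\,dy=0$ for all $\varphi\in C_c^\infty(D)$'' to the probabilistic mean-value identity $f_x(y)=\mathbb{E}^y[f_x(X_{\tau_D})]$ --- is not automatic; that implication is a regularity statement, not a formal consequence. It holds for functions in the domain of $\mathcal{L}$ (that is the content of \cite[Theorem 2.7]{MR2515419}), and $f_x$ is not a priori in that domain: it is merely integrable, with a singularity at $x$, and only known to be bounded away from $x$. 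Asserting the mean-value property for $f_x$ directly hides exactly the technical difficulty the theorem is meant to resolve.

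The paper's device for closing this gap is mollification. Set $u_{\delta,x}=g_\delta*f_x\in C_c^\infty(D_{+\delta})$. One then checks $\langle u_{\delta,x},\mathcal{L}\varphi\rangle=\langle f_x,\mathcal{L}(g_\delta*\varphi)\rangle=0$ for $\varphi\in C_c^\infty(D_{-\delta})$, i.e., $u_{\delta,x}$ is weakly $\mathcal{L}$-harmonic on $D_{-\delta}$. Being smooth and compactly supported, $u_{\delta,x}\in\mathcal{D}(\mathcal{L})$, so \cite[Theorem 2.7]{MR2515419} applies and gives $u_{\delta,x}(y)=\mathbb{E}^y u_{\delta,x}(X_{\tau_U})$ for $\overline{U}\subset D_{-\delta}$. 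This first yields boundedness of $f_x$ (by bounding $u_{\delta,x}$ on a ball around $x$), then, exhausting $D_{-\delta}$ by compacts and using quasi-left continuity, $|u_{\delta,x}(y)|\le M\,\mathbb{P}^y\big(X_{\tau_{D_{-\delta}}}\in D_{+\delta}\setminus D_{-\delta}\big)$; letting $\delta\to 0$ gives $|f_x(y)|\le M\,\mathbb{P}^y(X_{\tau_D}\in\partial D)=0$, the last equality by the $C^{1,1}$ regularity of $\partial D$. Your outline reaches the same endpoint but omits the mollification that makes the mean-value step legitimate; without it the argument does not close.
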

\begin{proof}
For $x \not \in D$, $G_D(x,\cdot) \equiv 0$ and (\ref{eq:wp}) follows. We fix $x \in D$.
Let $g\in C_c^\infty(B(0,1))$ be a symmetric function such that  $g\geq 0$ and $\int g(x)dx=1$. Let $r_x = \frac{\delta_D(x)}{3} > \delta > 0$ and $g_\delta(x) = \delta^{-d}g(\frac{x}{\delta})$. Set
$$
D_{+\delta}=\{x:\dist(x,{D})<\delta\} \quad \mbox{and} \quad D_{-\delta}=\{x\in D:\dist(x,\partial{D})>\delta\}.
$$
We consider $u_{\delta, x}=g_\delta*f_x\in C_c^\infty(D_{+\delta})$.
Let $\varphi\in C_c^\infty(D_{-\delta})$, then  $g_\delta*\varphi\in C_c^\infty(D)$. By Lemma \ref{lem:tgi},
\begin{equation}
\langle g_\delta*f_x,\mathcal{L}\varphi\rangle=\langle f_x,g_\delta*\mathcal{L}\varphi\rangle = \langle f_x,\mathcal{L}(g_\delta*\varphi)\rangle = 0.
\end{equation}
So $u_{\delta,x}$ is weak $\mathcal{L}$-harmonic on $D_{-\delta}$. Since $u_{\delta,x}\in\mathcal{D}(\mathcal{L})$ by \cite[Theorem 2.7]{MR2515419}
$
u_{\delta,x}(y) = \mathbb{E}^yu_{\delta,x}(X_{\tau_U})$ for every $ \overline{U}\subset D_{-\delta}$.
Since $\delta < r_x$, for every $y \in \RR^d$ , we have
$$
|u_{\delta,x}(y)|\leq\mathbb{E}^y|u_{\delta,x}(X_{\tau_{B(x,2r_x)}})| \leq \Vert f_x\mathbbm{1}_{B^c(x,r_x)}\Vert_\infty:=M.
$$
Since $|u_{\delta,x}(y)|\xrightarrow[\delta \to 0]{} |f_x(y)|$ a.s., we obtain $|f_x(y)| \leq \Vert f_x\mathbbm{1}_{B^c(x,r_x)}\Vert_\infty$, a.s. Since $f_x$ is continuous,  $f_x$ is bounded on $\mathbb{R}^d$. \\
Let $\{U_n\}_{n\in\mathbb{N}}$ be a family of sets such that $U_n \nearrow D_{-\delta}$. By quasi-left continuity of $X_t$,
\begin{align*}
 |u_{\delta,x}(y)| &=|\lim_{n\to\infty}\mathbb{E}^yu_{\delta,x}(X_{\tau_{U_n}})|= |\mathbb{E}^y\lim_{n\to\infty}u_{\delta,x}(X_{\tau_{U_n}})| = |\mathbb{E}^yu_{\delta,x}(X_{\tau_{D_{-\delta}}})|\\
&= |\mathbb{E}^y(u_{\delta,x}(X_{\tau_{D_{-\delta}}}),X_{\tau_{D_{-\delta}}} \in D_{+\delta}\setminus D_{-\delta})|\leq M\mathbb{P}^y(X_{\tau_{D_{-\delta}}}\in D_{+\delta}\setminus D_{-\delta}).
\end{align*}
So $|u_{\delta,x}(y)|\leq M\mathbb{P}^y(X_{\tau_{D_{-\delta}}}\in D_{+\delta}\setminus D_{-\delta})$ and with $\delta \rightarrow 0$, we finally obtain
\begin{equation*}
|f_x(y)|\leq M\mathbb{P}^y(X_{\tau_D}\in\partial D)=0,
\end{equation*}
which  completes the proof.
\end{proof}

Let $\kk_0(x,y)=G_D(x,y)$. We inductively define
\begin{equation*}
\kk_{n}(x,y)=\int_D \kk_{n-1}(x,z) b(z)\cdot \nabla_z G_D(z,y)\,dz\,,\quad x\neq y\in D\,,\quad n=1,2,\ldots\,.
\end{equation*}
By Lemma~\ref{GradGreen}, Lemma~\ref{lem:boundKappa}, Fubini's theorem and induction, we also have 
\begin{equation}\label{eq:gn2}
G_n(x,y)=\int G_D(x,z)b(z)\cdot \nabla_z G_{n-1}(z,y)dz\,, \quad x\neq y\in D, n=2,3,\ldots\,.
\end{equation}
 We end this section with the estimates of $\tG_D(x,y)$ for small sets $D$.
\begin{lemma}\label{Theorem1s} %inequality G tG G
Let $d\geq 2$, $b\in \pK_d$ and $\lambda>0$. There is $\varepsilon=\varepsilon(d,\nu,b,\lambda)>0$ such that if
${\rm diam}(D)/r_0(D)\le \lambda$ and $\diam(D)\le\varepsilon$, then
  \begin{equation}
    \label{eq:egfs}
\frac{2}{3}G_D(x,y) \le \tG_D(x,y) \le \frac{4}{3} G_D(x,y), \qquad x,y \in
\Rd\,.
  \end{equation}
\end{lemma}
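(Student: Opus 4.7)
The strategy is to iterate the Duhamel formula~\eqref{eq:wp} of Theorem~\ref{lem:pf} into a geometric Neumann series and exploit that $\CIII(d,\nu,b,\lambda,r)\to 0$ as $r\to 0$ (Lemma~\ref{lem:boundKappa}). Fix $\lambda>0$ and choose $\varepsilon\in(0,1]$ small enough that $\Cg\CIII(d,\nu,b,\lambda,\varepsilon)\le 1/4$; since $\diam(D)\le 1$, the factor $\wedge 1$ in the gradient estimate~\eqref{gradEst} is absorbed into $\delta_D(z)\wedge|z-y|$ throughout.

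The first step is the inductive estimate
\[
|G_n(x,y)| \le (\Cg\CIII)^n\, G_D(x,y), \qquad x\neq y\in D,\ n\ge 0,
\]
whose $n=1$ case is exactly~\eqref{kxyEstimates}. For $n\ge 2$, I would combine the Fubini-swapped form~\eqref{eq:gn2}, the gradient estimate~\eqref{gradEst} applied to $\nabla_z G_D(z,y)$, the inductive hypothesis for $G_{n-1}(z,y)$, and the bound $\kappa(x,y)\le\CIII$ from Lemma~\ref{lem:boundKappa}, which collapse to
\[
|G_n(x,y)| \le (\Cg\CIII)^{n-1}\,\Cg \int_D G_D(x,z)\,|b(z)|\,\frac{G_D(z,y)}{\delta_D(z)\wedge|z-y|}\,dz \le (\Cg\CIII)^n\, G_D(x,y).
\]

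The second step is the series identification. With $Tf(x,y):=\int_D f(x,z)\,b(z)\cdot\nabla_z G_D(z,y)\,dz$, Theorem~\ref{lem:pf} reads $\tG_D=G_D+T\tG_D$, and iterating gives $\tG_D=\sum_{n=0}^{N-1}G_n+T^N\tG_D$ for every $N\ge 1$. By step~1 the partial sums converge geometrically in the weighted norm $\|F\|_G:=\sup_{x\ne y\in D}|F(x,y)|/G_D(x,y)$ to $H:=\sum_{n\ge 0}G_n$, and $\|H-G_D\|_G\le \Cg\CIII/(1-\Cg\CIII)\le 1/3$. Both $H$ and $\tG_D$ solve the integral equation $F=G_D+TF$, and on the Banach space $\mathcal B:=\{F:\|F\|_G<\infty\}$ the operator $T$ contracts with ratio $\Cg\CIII\le 1/4$, so this equation has a unique solution in $\mathcal B$.

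The main obstacle is therefore the a priori estimate $\tG_D\in\mathcal B$, i.e.\ $\tG_D\le c\,G_D$ on $D\times D$ for some finite $c$. The crude bound $\tG_D\le\CXXI U$ from Lemma~\ref{lem:GLDupper} does not suffice by itself, because $U/G_D$ blows up near $\partial D$; so I would establish the sharper bound through a one-step bootstrap of~\eqref{eq:wp}, inserting the $U$-bound into the right-hand side and controlling the resulting integral using Proposition~\ref{lem:3G} together with Lemma~\ref{GDUnif} and monotonicity of $U$. Once $\tG_D\in\mathcal B$ is established, uniqueness in $\mathcal B$ gives $\tG_D=H$, and the bounds~\eqref{eq:egfs} follow at once from $\|H-G_D\|_G\le 1/3$.
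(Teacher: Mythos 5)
Your iteration, the geometric bound $|G_n|\le(\Cg\CIII)^nG_D$, and the choice of $\varepsilon$ so that $\Cg\CIII\le 1/4$ are all exactly the first half of the paper's proof. The gap is in how you close the argument: you cast it as a fixed-point/uniqueness problem on the Banach space $\mathcal B=\{F:\|F\|_G<\infty\}$ and therefore need the a priori bound $\tG_D\le c\,G_D$ to place $\tG_D$ in $\mathcal B$. The ``one-step bootstrap'' you propose for this does not work. Plugging $\tG_D\le\CXXI U$ into the right-hand side of~\eqref{eq:wp} gives
\begin{equation*}
\tG_D(x,y)\le G_D(x,y)+\Cg\CXXI\int_D U(x-z)\,|b(z)|\,\frac{G_D(z,y)}{\delta_D(z)\wedge|z-y|}\,dz,
\end{equation*}
and the integral on the right does \emph{not} decay like $G_D(x,y)$ as $x\to\partial D$: the weight $U(x-z)$ stays bounded away from $0$ while $G_D(x,y)\asymp V(\delta_D(x))V(\delta_D(y))U(x-y)/V^2(r(x,y))\to 0$. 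So the ratio blows up and the bootstrap cannot place $\tG_D$ in $\mathcal B$; the needed inequality $\tG_D\le c\,G_D$ is essentially what the lemma is trying to prove, and using it as input is circular.

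The paper avoids the a priori estimate entirely. It writes the same iteration as you, $\tG_D=\sum_{k=0}^n G_k+\int_D\tG_D(x,z)\,b(z)\cdot\nabla_z G_n(z,y)\,dz$, but controls the remainder \emph{pointwise} for each fixed $x\ne y$ rather than in the weighted norm. The extra ingredient, which your proposal omits, is the inductive gradient estimate
\begin{equation*}
|\nabla_z G_n(z,y)|\ \le\ 2^{-n}\,\Cg\,\frac{G_D(z,y)}{\delta_D(z)\wedge|z-y|},
\end{equation*}
obtained by differentiating~\eqref{eq:gn2} through the integral (Lemma~\ref{GradGreen}) and using~\eqref{gradEst} together with $\kappa\le\CIII$. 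Combined with the crude bound $\tG_D\le\CXXI U$, the remainder is at most $2^{-n}\Cg\CXXI\int_D U(x-z)|b(z)|\,G_D(z,y)/(\delta_D(z)\wedge|z-y|)\,dz$, which is a \emph{fixed} finite quantity (finite by the argument preceding Theorem~\ref{lem:pf}, resting on Lemma~\ref{GDUnif}) multiplied by $2^{-n}$. Hence it tends to $0$ as $n\to\infty$ without any uniform-in-$x$ statement, and $\tG_D=\sum_{n\ge0}G_n$ follows pointwise. To repair your write-up you should drop the contraction/uniqueness framing and instead prove the gradient bound~\eqref{eq:gn1} and use it to send the remainder to zero pointwise.
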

\begin{proof}
We follow the arguments from \cite{MR2892584}. We present only the main steps of the proof, the details are left to the reader.

Let $x \not= y$.
Iterating (\ref{eq:wp}), by (\ref{eq:gn2}), we obtain for $n=0, 1,\ldots$,
\begin{eqnarray}
\tG_D(x,y)&=&G_D(x,y)+\int \tG_D(x,z)b(z)\cdot \nabla_zG_D(z,y)dz\nonumber\\
&=&\sum_{k=0}^n \kk_k(x,y)
+\int \tG_D(x,z)b(z)\cdot \nabla_{z}G_n(z,y)dz\,.\label{eq:ipf}
\end{eqnarray}

Let $\lambda>0$. We note that the constant $\CIII$ from Lemma \ref{lem:boundKappa} may be arbitrary small if $\diam(D)/r_0(D)\le \lambda$ and $r_0(D)$ is small enough. Hence, we may choose $\varepsilon=\varepsilon(d,\nu,b,\lambda)>0$ such that $\Cg\CIII < 1/4$. By (\ref{gradEst}),  (\ref{kxyEstimates}),  Lemma \ref{lem:boundKappa} and induction,
\begin{align}
  |\kk_{n}(x,y)|  &\le  \int_D  |\kk_{n-1}(x,z)| |b(z)| |\nabla_z G_D(z,y)| \,dz \nonumber  \\
   &\le  (\Cg\CIII)^{n-1} \int_D    G_D(x,z) |b(z)|  |\nabla_z G_D(z,y)| \,dz  \le 4^{-n} G_D(x,y)\,,  \label{eq:eGn} \\
&|\nabla_x G_{n}(x,y)|  \leq 2^{-n}\Cg \frac{G_D(x,y)}{\delta_D(x) \land |x-y|}\,, \label{eq:gn1}
\end{align}
for $n=0,1, 2, \ldots\,.$ Now, we have $\tG_D(x,y)=\sum_{n=0}^\infty \kk_n(x,y)$. 
Indeed, by (\ref{eq:gn1}), the remainder in (\ref{eq:ipf}) is bounded by
\begin{eqnarray*}
 2^{-n}\Cg \int_D U(x-z)|b(z)|  \frac{G_D(z,y)}{\delta_D(z)\land |y-z|}dz\to 0\,,\quad \mbox{ as } n\to \infty\,.
\end{eqnarray*}
The integral is finite because of Lemma~\ref{GDUnif}. Thus, by (\ref{eq:eGn}),
\begin{align*}
\tG_D(x,y)
&\leq\sum_{n=0}^\infty \kk_n(x,y)\le
\sum_{n=0}^\infty 4^{-n}G_D(x,y) = \frac{4}{3}G_D(x,y)\,,\\
\tG_D(x,y)&\geq
G_D(x,y) - \sum_{n=1}^\infty 4^{-n}G_D(x,y)=\frac{2}{3}G_D(x,y)\,.
\end{align*}
\end{proof}

\section{Proof of Theorem~\ref{Theorem1}}\label{sec:b}

Using the comparability of $G_D$ and $\tG_D$ for small $C^{1,1}$ sets and repeating the arguments from \cite{MR2892584}, we obtain estimates of the Poisson kernel and Harnack principles. The proofs are almost identical to the ones from \cite{MR2892584}. Nevertheless, due to the references we use, we present them below.

By Ikeda-Watanabe formula, we get
\begin{equation}
  \tPP^x(X_{\tau_D} \in A) \approx \PP^x(X_{\tau_D} \in A)\,,\quad x\in D\,,\quad A\subset (\overline{D})^c\,. \label{I-WComp1}
\end{equation}
for sufficiently small $\diam(D)$ and bounded distortion. The next lemma says that the process $\tX_t$ does not hit the boundary of our general $C^{1,1}$ open set $D$ in the moment of the first exit from $D$.

\begin{lemma}\label{l:nu} %0 na brzegu
For every $x\in D$, we have $\tPP^x(X_{\tau_D}\in \partial D)=0$.
\end{lemma}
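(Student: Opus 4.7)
The plan is to combine the Ikeda--Watanabe description (Lemma \ref{lem:lsgp}) with the perturbation identity (Theorem \ref{lem:pf}), using as input the corresponding fact for the unperturbed process, $\PP^x(X_{\tau_D}\in\partial D)=0$ from \cite{MR1825650}. By Lemma \ref{lem:lsgp} and the absolute continuity of $\nu$ (which gives $\nu((\overline{D})^c-z)=\nu(D^c-z)$), one has
\[
\tPP^x(X_{\tau_D}\in(\overline{D})^c)=\int_D\tilde{G}_D(x,z)\,\nu((\overline{D})^c-z)\,dz,
\]
and the analogous formula with $G_D$ in place of $\tilde{G}_D$ equals $1$ because $\PP^z(X_{\tau_D}\in(\overline{D})^c)=1$ for every $z\in D$. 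Since $\tPP^x(\tau_D<\infty)=1$ (as $D$ is bounded), the lemma reduces to showing
\[
\int_D\tilde{G}_D(x,z)\,\nu((\overline{D})^c-z)\,dz=1. \qquad(\star)
\]

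To establish $(\star)$, I would apply Theorem \ref{lem:pf}, multiply both sides by $\nu((\overline{D})^c-y)$, and integrate in $y$ over $D$. Using Fubini and passing $\nabla_z$ through the inner $y$-integral (in the spirit of Lemma \ref{GradGreen}), the perturbation term becomes
\[
\int_D\tilde{G}_D(x,z)\,b(z)\cdot\nabla_z u(z)\,dz,\qquad u(z):=\int_D G_D(z,y)\,\nu((\overline{D})^c-y)\,dy=\PP^z(X_{\tau_D}\in(\overline{D})^c)=1.
\]
Because $u\equiv 1$ on $D$ we have $\nabla u\equiv 0$, the perturbation term vanishes, and $(\star)$ follows from the corresponding identity for $G_D$.

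The main obstacle is the rigorous justification of Fubini and of the gradient-under-integral step, since $y\mapsto\nu((\overline{D})^c-y)$ may blow up near $\partial D$ and is not obviously in $\pK_d$, so Lemma \ref{GradGreen} does not apply directly. I would sidestep this by an approximation from within: for $\varepsilon>0$ set $A_\varepsilon:=\{w\in D^c:\dist(w,\partial D)>\varepsilon\}$, so that $f_\varepsilon(y):=\nu(A_\varepsilon-y)$ is bounded on $D$, hence in $\pK_d$. Lemma \ref{GradGreen} then applies to $u_\varepsilon(z):=\int_D G_D(z,y)\,f_\varepsilon(y)\,dy=\PP^z(X_{\tau_D}\in A_\varepsilon)$, and the above calculation yields the analog of $(\star)$ with $A_\varepsilon$ in place of $(\overline{D})^c$. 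As $\varepsilon\to 0^+$, $A_\varepsilon\uparrow(\overline{D})^c$ and $u_\varepsilon\uparrow 1$ pointwise on $D$; gradient estimates for $\mathcal{L}$-harmonic functions (in the spirit of \eqref{gradEst} and \cite{TKMR}) give $\nabla u_\varepsilon\to 0$ locally uniformly on $D$, while remaining dominated by an integrable majorant against $\tilde{G}_D(x,z)|b(z)|\,dz$ (via Lemma \ref{lem:GLDupper} together with Lemma \ref{GDUnif}). Dominated convergence then yields $(\star)$ and completes the proof.
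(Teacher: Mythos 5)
Your approach and the paper's are genuinely different. The paper's proof is purely probabilistic: it first shows that there is $c>0$ with $\tPP^x(X_{\tau_D}\notin\partial D)\ge c$ for every $x\in D$ (by sending the process from $x$ into a small ball $B'\subset(\overline D)^c$ and comparing the $\tPP^x$- and $\PP^x$-exit distributions from a small ball around $x$, via Lemma~\ref{Theorem1s} and \eqref{I-WComp1}), and then bootstraps through the strong Markov property at the exit time from the interior subdomains $D_n$: since by Lemma~\ref{lem:lsgp} the $\tPP^x$-exit position from $D_n$ has a density on $(\overline{D_n})^c\supset\partial D$, one has $\tPP^x(X_{\tau_{D_n}}\in\partial D)=0$, hence $\sup_D u\le(1-c)\sup_D u$ for $u(x)=\tPP^x(X_{\tau_D}\in\partial D)$, forcing $u\equiv0$. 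This never touches the Duhamel formula and needs no quantitative bound on $\tG_D$ beyond comparability on small balls.

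Your route via Theorem~\ref{lem:pf} is a clean idea, and the intermediate identity
\[
\tPP^x(X_{\tau_D}\in A_\varepsilon)=\PP^x(X_{\tau_D}\in A_\varepsilon)+\int_D\tG_D(x,z)\,b(z)\cdot\nabla_z u_\varepsilon(z)\,dz
\]
is right in spirit (with $f_\varepsilon$ bounded, Lemma~\ref{GradGreen} applies; the Fubini step still deserves a careful check). The genuine gap is in the limit $\varepsilon\to0$. Note first that the identity forces
\[
\int_D\tG_D(x,z)\,b(z)\cdot\nabla_z u_\varepsilon(z)\,dz=\tPP^x(X_{\tau_D}\in A_\varepsilon)-\PP^x(X_{\tau_D}\in A_\varepsilon)\;\to\;-\,\tPP^x(X_{\tau_D}\in\partial D),
\]
so proving the remainder tends to $0$ is \emph{equivalent} to the lemma; you cannot avoid a real estimate of it. For dominated convergence you need an integrable majorant of $\tG_D(x,z)\,|b(z)|\,|\nabla u_\varepsilon(z)|$ uniform in $\varepsilon$. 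The gradient estimate for nonnegative $\mathcal L$-harmonic functions gives only $|\nabla u_\varepsilon(z)|\le c/\delta_D(z)$ with $c$ independent of $\varepsilon$, and indeed $\sup_{\varepsilon>0}|\nabla u_\varepsilon(z)|$ is of order $\delta_D(z)^{-1}$ (take $\varepsilon$ comparable to $\delta_D(z)$). Combined with Lemma~\ref{lem:GLDupper} this yields the candidate majorant $C\,U(x-z)\,|b(z)|\,\delta_D(z)^{-1}$, whose integral over $D$ diverges already for bounded $b$, since $\int_D\delta_D(z)^{-1}\,dz=\infty$ for $d\ge2$. Lemma~\ref{GDUnif} does not supply what is needed either: it controls the weight $(\delta_D(z)\wedge|y-z|)^{-1}$ attached to $G_D(y,z)$, not $\delta_D(z)^{-1}$ attached to $U(x-z)$. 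To salvage the argument one would want boundary decay of $\tG_D(x,\cdot)$ of the form $\tG_D(x,z)\le c\,V(\delta_D(z))$, but that is essentially the content of Theorem~\ref{Theorem1}, whose proof relies on Lemma~\ref{l:nu} through \eqref{PoissonComp}; so this path is circular at the present stage. The paper's argument is designed precisely to avoid this obstruction.
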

\begin{proof}
Let $u(x)=\tPP^x(X_{\tau_D}\in \partial D)$, $x\in \Rd$.
We claim that there exists $c=c(d,\nu,D, b)>0$ such that $u(x)<1-c$ for $x\in D$.
Indeed, we consider small $\varepsilon>0$, $x\in D$,  $r=\varepsilon\, {\rm dist}(x,D^c)$, the ball $B=B(x,r/2)\subset D$, and a ball $B'\subset (\overline{D})^c$ with radius and distance to $B$ comparable with $r$. By (\ref{I-WComp1}), (\ref{ptxy_comp}) and Lemma \ref{lem:lsgp}
$$\tPP^x(X_{\tau_D}\notin \partial D)
\geq \tPP^x(X_{\tau_B(x,r/2)}\in B')\approx \PP^x(X_{\tau_B(x,r/2)}\in B')
\geq c\,,$$
where in the last inequality we used \eqref{eq:Pk}, \eqref{nu_L}, \eqref{subadd} and \cite{MR632968}.
 Furthermore, let $D_n=\{y\in D:\,{\rm dist}(y,D^c)>1/n\}$, $n=1,2,\ldots$.
We consider $n$ such that $B(x,r/2)\subset D_n$.
We have
$\tPP^x(X_{\tau_{D_n}}\in \overline{D})\leq 1- \tPP^x(X_{\tau_B}\in B')\leq 1-c$, as before.
Let $C=\sup \{u(y): y\in D\}$. We have
$u(x)=\tEE^x\{u(X_{\tau_{D_n}});\,X_{\tau_{D_n}}\in \overline{D}\}\leq C(1-c)$, hence
$C\leq C(1-c)$ and so $C=0$.
\end{proof}
In the context of Lemma~\ref{Theorem1s},
the $\tPP^x$ distribution of $X_{\tau_D}$ is absolutely continuous with respect to the Lebesgue measure,
and has density function
\begin{equation}\label{PoissonComp}
  \tilde P_D(x,y) \approx P_D(x,y)\,,\quad \;y\in D^c\,,
\end{equation}
provided $x\in D$.
This follows from (\ref{eq:IWt}) and Lemma~\ref{l:nu}.
For clarity,
\begin{equation}
  \tPP^x(X_{\tau_S} \in A) \approx \PP^x(X_{\tau_S} \in A)\,,\quad x\in S\,,\quad A\subset S^c\,. \label{I-WComp}
\end{equation}

\begin{lemma}[Harnack inequality for $\tL$]\label{HIforL} %Harnack inequality for L
  Let $x,y \in \Rd$, $0<s<1$ and $k\in \mathbb{N}$ satisfy $|x-y|
  \le 2^ks$. Let ${u}$ be non-negative in $\Rd$ and
  $\tilde{\mathcal{L}}$-harmonic in $B(x,s) \cup B(y,s)$.  There is $\CIV = \CIV(d, \psi, b)$ such that
  \begin{equation}\label{LHarnackIneq}
    \CIV^{-1}2^{-k(d+\ua)}{u}(x) \le {u}(y) \le \CIV 2^{k(d+\ua)}{u}(x)\,.
  \end{equation}
\end{lemma}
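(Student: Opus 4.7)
The strategy is to transfer the Harnack inequality from $\mathcal{L}$-harmonic to $\tilde{\mathcal{L}}$-harmonic functions using the Poisson kernel comparability \eqref{PoissonComp}, and to extract the polynomial factor $2^{k(d+\ua)}$ directly from the long-range jump structure of $X$ rather than from iterating a short-range Harnack along a chain of $2^k$ balls (which would give an exponential loss). First I would reduce to the case $s \le \varepsilon/4$, where $\varepsilon$ is the threshold of Lemma~\ref{Theorem1s}. If $s \in (\varepsilon/4, 1)$, the working balls $B(x,s)$, $B(y,s)$ are replaced by $B(x, \varepsilon/4)$, $B(y, \varepsilon/4)$, which are still contained in the respective domains of $\tilde{\mathcal{L}}$-harmonicity; the parameter $k$ then gets replaced by $k' \le k + c(\varepsilon)$, and the extra constant $2^{c(\varepsilon)(d+\ua)}$ is absorbed into $\CIV = \CIV(d,\psi,b)$.

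Next I would establish a \emph{local} Harnack inequality: for any $z \in \Rd$ and any nonnegative $\tilde{\mathcal{L}}$-harmonic $v$ on $B(z,s)$ with $s \le \varepsilon/4$, one has $v(z') \approx v(z)$ for $z' \in B(z, s/2)$. This follows by writing
\[
v(z') = \int_{B(z,s)^c} v(w)\, \tilde P_{B(z,s)}(z',w)\,dw,
\]
using \eqref{PoissonComp} to replace $\tilde P_{B(z,s)}$ by $P_{B(z,s)}$, and then applying the classical local Harnack inequality for nonnegative $\mathcal{L}$-harmonic functions of unimodal L\'evy processes (available under the global weak scaling of $\psi$, cf.\ \cite{MR3165234}) to the $\mathcal{L}$-harmonic extension of $v|_{B(z,s)^c}$ evaluated at $z'$ and $z$.

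The main step is then the long-range bound. By the Ikeda--Watanabe formula, \eqref{nu_L}, the standard estimate $\mathbb{E}^x \tau_{B(x,s)} \approx V^2(s)$ for unimodal processes, and \eqref{PoissonComp},
\[
\tilde P_{B(x,s)}(x,w) \approx P_{B(x,s)}(x,w) \ge c_1\, \frac{V^2(s)}{V^2(|x-w|)\,|x-w|^d}, \qquad |x-w| \ge 2s.
\]
Starting from $u(x) = \int u(w)\,\tilde P_{B(x,s)}(x,w)\,dw$, restricting the integral to $w \in B(y, s/2)$, using the local Harnack of the previous paragraph to replace $u(w)$ by a constant multiple of $u(y)$, and then applying $V^2 \in \WUSC{\ua}{0}{2\uC\CXX}$, I obtain
\[
u(x) \ge c_2\,u(y)\,\frac{V^2(s)\,s^d}{V^2(2^{k+1}s)\,(2^{k+1}s)^d} \ge c_3\,u(y)\,2^{-k(d+\ua)},
\]
which yields $u(y) \le \CIV\, 2^{k(d+\ua)}u(x)$. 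The opposite inequality follows by the symmetric argument in $x$ and $y$.

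The main obstacle I expect is the careful justification of the Poisson kernel lower bound $P_{B(x,s)}(x,w) \ge c\,V^2(s)/(V^2(|x-w|)|x-w|^d)$ for $|x-w| \ge 2s$: in the stable case this is immediate from explicit formulas, but in the general unimodal setting it requires combining the Ikeda--Watanabe representation with the mass estimate $\int_{B(x,s)} G_{B(x,s)}(x,v)\,dv = \mathbb{E}^x\tau_{B(x,s)} \approx V^2(s)$, the radial monotonicity of $\nu$ together with \eqref{nu_L}, and the elementary observation that $|w-v| \approx |w-x|$ uniformly for $v \in B(x,s)$ when $|w-x|\ge 2s$.
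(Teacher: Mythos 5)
Your proposal follows essentially the same route as the paper: reduce to $s$ small enough that Lemma~\ref{Theorem1s} applies, deduce a local Harnack inequality for $\tilde{\mathcal{L}}$-harmonic functions from the Poisson kernel comparability \eqref{PoissonComp}/\eqref{I-WComp} together with the known Harnack inequality for $\mathcal{L}$, then obtain the long-range factor $2^{-k(d+\ua)}$ from the Ikeda--Watanabe lower bound $P_{B(x,s/2)}(x,z) \ge \mathbb{E}^x\tau_{B(x,s/2)}\,\nu(2^{k+1}s)$ combined with $\nu(r)\approx 1/(V^2(r)r^d)$, $\mathbb{E}^x\tau_{B(x,s/2)}\approx 1/\psi(2/s)$, and weak upper scaling, finishing by symmetry. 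The paper phrases the scaling step through $\psi\in\WUSC{\ua}{0}{\uC}$ rather than $V^2$, but since $V^2(r)\approx 1/\psi(1/r)$ these are the same estimate, so the argument is correct and matches the paper's.
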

\begin{proof}

We may assume that $s\leq 1\land \varepsilon/2$, with  $\varepsilon$ of Lemma~\ref{Theorem1s}.
Let $f(z)=u(z)$ for $z\in B(y,2s/3)^c$ and $f(z)=\int_{B(y,2s/3)^c} {u}(v)P_{B(y,2s/3)}(z,v)\,dv$ for $z\in B(y,2s/3)$, so that
$f$ is non-negative in $\RR^d$ and $\mathcal{L}$-harmonic in $B(y,2s/3)$. Let $z \in  B(y,s/2)$.
By (\ref{I-WComp}),
 $$
    {u}(z) = \tEE^z {u}(X (\tau_{B(y,2s/3)})) = \int_{B(y,2s/3)^c} {u}(v)\tilde P_{B(y,2s/3)}(z,v)\,dv\approx f(z)\,.
 $$
%By Lemma~\ref{HI} we get $\tilde{u}(z) \approx
%  \tilde{u}(y)$ with a constant depending only on $d$,
%  $\alpha$ and $b$.
The Harnack inequality for $\mathcal{L}$ (\cite{MR3225805}) implies $u(y)\approx u(z)$, where the comparability constant depends on $\psi$, $d$ and $b$. The standard chain rule provides  $u(x)\approx u(y)$ for $|x-y|< 3/2 s$.  Therefore we assume that $|x-y| \ge 3s/2$.  For $z \in B(y,s/2)$  and $w \in B(x,s/2)$ we have $|w-z| \le   |x-y|+|y-z|+|w-x|
 \le2^k s + s  \le 2^{k+1}s$. Hence by the Ikeda-Watanabe formula, \eqref{nu_L} and \cite{MR632968}
  \begin{align*}
	P_{B(x,s/2)}(x,z) &=\int_{B(x,s/2)}G_{B(x,s/2)}(x,w)\nu(|w-z|)dw \geq \mathbb{E}^x\tau_{B(x,s/2)}\nu(2^{k+1}s) \\
	 & \approx \frac{\psi\left(\frac{1}{2^{k+1}s}\right)}{(2^{k+1}s)^d\psi\left(\frac{2}{s}\right)} \geq \frac{1}{(2^{k+1}s)^d\uC 2^{\ua(k+2)}}  \geq \frac{1}{2^{k(d+\ua)}\uC 2^{d+2\ua}}s^{-d}.
  \end{align*}
%  \begin{eqnarray*}
%    P_{B(x,s/2)}(x,z) &=& C_{d,\alpha} \left(\frac{(s/2)^2}{|x-z|^2 - (s/2)^2}\right)^{\alpha/2} \frac{1}{|x-z|^d}\\
%    & \ge & C_{d,\alpha} 2^{-\alpha} s^\alpha |x-z|^{-d-\alpha}\\
%    & \ge & C_{d,\alpha} 2^{-(d+2\alpha)}2^{-k(d+\alpha)} s^{-d}\,.
%  \end{eqnarray*}
Since
   $\tilde P_{B(x,s/2)} \approx P_{B(x,s/2)}$, by the first part of the proof we obtain
  \begin{multline}
    {u}(x)  =  \int_{B(x,s/2)^c} \tilde P_{B(x,s/2)}(x,z){u}(z)\,dz
\ge \int_{B(y,s/2)} \tP_{B(x,s/2)}(x,z) {u}(z) \,dz \\\notag
    \approx \int_{B(y,s/2)} P_{B(x,s/2)}(x,z) {u}(z) \,dz \ge  \frac{c|B(y,s/2)|}{2^{k(d+\ua)}s^d 2^{d+2\ua}} {u}(y)
    =  \CIV 2^{-k(d+\ua)}{u}(y)\,.
  \end{multline}
  By symmetry, $u (x)\approx u(y)$.
\end{proof}
We obtain also the boundary Harnack principle for $\mathcal{L}$ and general $C^{1,1}$ sets $D$.

\begin{lemma}[BHP]\label{BHPforL} %Boundary Harnack Principle
  Let $z \in \partial{D}$, $0<r\le  r_0(D)$, and $0<p<1$. If
$\tilde{u}, \tilde{v}$ are non-negative in $\Rd$, regular $\tilde{\mathcal{L}}$-harmonic
  in $D \cap B(z,r)$, vanish on $D^c \cap B(z,r)$
  and satisfy $\tilde{u}(x_0)=\tilde{v}(x_0)$ for some $x_0 \in D \cap B(z,pr)$
  then
  \begin{equation}
    \label{BHPforLEq}
    \CVI^{-1}\tilde{v}(x) \le \tilde{u}(x) \le \CVI \tilde{v}(x)\,,\quad x \in D \cap B(z,pr)\,,
  \end{equation}
with $\CVI = \CVI(d,\psi,b,p,r_0(D))$.
\end{lemma}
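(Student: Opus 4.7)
My plan is to transfer the boundary Harnack principle for $\mathcal{L}$ (which is available for isotropic unimodal L\'evy processes satisfying our weak-scaling assumptions on $\psi$, see \cite{MR3225805}) to $\tilde{\mathcal{L}}$ by comparing exit distributions on small $C^{1,1}$ sets, following the template of \cite{MR2892584}. The crucial input is Lemma~\ref{Theorem1s}, which via (\ref{I-WComp}) yields $\tilde{P}_S\approx P_S$ whenever $S$ is a $C^{1,1}$ set of sufficiently small diameter and bounded distortion.

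Let $\varepsilon$ denote the threshold from Lemma~\ref{Theorem1s} corresponding to the distortion bound $2/\kappa$ provided by Lemma~\ref{l:loc}. I first treat the case $r\le\varepsilon\wedge r_0(D)$; the general case follows by combining the small-scale statement with a chain of interior applications of Lemma~\ref{HIforL}. For such small $r$, Lemma~\ref{l:loc} produces a $C^{1,1}$ set $F\subset D$ with $F\cap B(z,r/4)=D\cap B(z,r/4)$, $\diam F<2r$, and distortion at most $2/\kappa$, so Lemma~\ref{Theorem1s} applies to $F$: $\tilde{G}_F\approx G_F$ and hence, via the Ikeda--Watanabe identity together with Lemma~\ref{l:nu}, $\tilde{P}_F(x,\cdot)\approx P_F(x,\cdot)$ on $F^c$.

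For $x\in D\cap B(z,pr/4)$, after arranging $F\subset D\cap B(z,r)$ by an auxiliary nested localization if necessary, the strong Markov property yields $\tilde{u}(x)=\tEE^x\tilde{u}(X_{\tau_F})$. Setting $u(x)=\EE^x\tilde{u}(X_{\tau_F})$ defines a non-negative regular $\mathcal{L}$-harmonic function on $F$ that vanishes on $B(z,r/4)\cap D^c$ and, by $\tilde{P}_F\approx P_F$, satisfies $u\approx\tilde{u}$ on $F$. Defining $v$ analogously from $\tilde{v}$ and applying the BHP for $\mathcal{L}$ to the pair $u,v$ on $F$ at the boundary point $z$ gives $u(x)/u(x_0)\approx v(x)/v(x_0)$; since $u(x_0)\approx\tilde{u}(x_0)=\tilde{v}(x_0)\approx v(x_0)$, the comparability $\tilde{u}(x)\approx\tilde{v}(x)$ follows on $D\cap B(z,pr/4)$, and a Harnack-chain argument based on Lemma~\ref{HIforL} extends it to $D\cap B(z,pr)$.

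The main obstacle is the geometric bookkeeping: one must arrange that the localized set $F$ is simultaneously contained in the region $D\cap B(z,r)$ of $\tilde{\mathcal{L}}$-harmonicity of $\tilde{u},\tilde{v}$, of diameter below the Lemma~\ref{Theorem1s} threshold, and a $C^{1,1}$ neighborhood of $z$ large enough to host the $\mathcal{L}$-BHP at a compatible scale. The inclusion $F\subset D\cap B(z,r)$ is not automatic from Lemma~\ref{l:loc}, so in practice one works with a nested pair of radii, or first establishes comparability at a very small scale at each boundary point and propagates the conclusion by Lemma~\ref{HIforL}; both routes are exactly as in the proof of the analogous lemma in \cite{MR2892584}.
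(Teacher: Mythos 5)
Your proof is correct and follows essentially the same route as the paper: reduce to small $r$ via the Harnack inequality (Lemma~\ref{HIforL}), localize with $F=F(z,r/2)$ from Lemma~\ref{l:loc} so that Lemma~\ref{Theorem1s} gives $\tilde P_F\approx P_F$, transfer $\tilde u,\tilde v$ to $\mathcal{L}$-harmonic functions $u,v$ on $F$ via the Poisson kernel, invoke the boundary Harnack principle for $\mathcal{L}$ (the paper cites \cite[Theorem 2.18]{KSV2014} rather than \cite{MR3225805}), and propagate by Harnack chains. The geometric bookkeeping you flag is resolved exactly as you suspect: the paper simply takes $F=F(z,r/2)$ so that $\diam F<r$ and $F\subset B(z,r)$ automatically.
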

\begin{proof}  In view of  Lemma~\ref{HIforL} we may assume that $r$ is small.
Let $F=F(z,r/2) \subset B(z,r)$ be the $C^{1,1}$ domain of Lemma~\ref{l:loc}, localizing $D$ at $z$.
For $x
  \in F$ we have $\tilde{u}(x) = \int \tilde P_{F}(x,z) \tu(z)\,dz
  \approx u(x)$, where $u(x)= \int P_{F}(x,z) \tu(z)\,dz$.
Similarly $\tilde{v}(x) \approx v(x)=\int P_{F}(x,z) \tilde{v}(z)\,dz$. Since $\tilde{u}(x_0)
  = \tilde{v}(x_0)$, we have $u(x_0)\approx v(x_0)$.
    By \cite[Theorem 2.18]{KSV2014}, ${u}(x) \approx {v}(x)$, provided $x\in D\cap B(z,r/8)$.
We use Lemma~\ref{HIforL} for the full range $x\in D\cap B(z,pr)$.
\end{proof}

Now, we have all the tools necessary to prove the main result of our paper. Since in the proof we follow the idea from \cite{MR2892584}, we only give its basic steps (for details see  \cite[Proof of Theorem 1]{MR2892584}).
\begin{proof}[Proof of Theorem \ref{Theorem1}]
By (\ref{eq:wp}) and (\ref{eq:estGradGreena}), we have the estimate
\begin{equation}\label{eq:0}
  \tG_D(x,y) \le G_D(x,y) + \Cg \int_{D} \frac{\tG_D(x,z)G_D(z,y)}{\delta_D(z)\land |y-z|} |b(z)|\,dz\,,\quad
x,y\in D\,.
\end{equation}
We consider  $\eta<1$, say $\eta=1/2$.
By Lemma~\ref{GDUnif} and the uniform integrability in Lemma~\ref{lem:boundKappa} (see  \eqref{eq:ineqGGGUU}) there is a constant $r>0$ so small that
\begin{align}
\int_{D_r} \frac{G_D(z,y)}{\deltaDD(z)\land |y-z|} |b(z)|\,dz  &<\frac{\eta}{\Cg}\,,\quad
  y \in D\,, \label{eq:1}\\
  \int_{D_r} \frac{G_D(x,z)G_D(z,y)}{G_D(x,y)(\deltaDD(z)\land |y-z|)} |b(z)|\,dz &<\frac{\eta}{\Cg}\,, \quad y \in D\,.\label{eq:2}
\end{align}
Here, $D_r = \{z \in D \colon \deltaDD(z) \leq r\}$.
We denote $$\rho=[\varepsilon \land r_0(D)\land r]/16\,,$$ with $\varepsilon=\varepsilon(d,\alpha,b,2/\kappa)$ of Lemma~\ref{Theorem1s}, see also Lemma~\ref{l:loc}.

To prove (\ref{eq:egf}) we will consider $x$ and $y$ in a partitions of $D\times D$.

\noindent First, we consider $y$ far from the boundary of $D$, say $\delta_D(y) \ge \rho/4$.
\begin{itemize}
	\item For $|x-y| \le \rho/8$, $G_D(x,y) \approx G_B(x,y) \approx U(x-y) \approx \tG_D(x,y)$ (we use Lemmas \ref{GreenEst1}, \ref{Theorem1s}, \ref{lem:GLDupper}).
	\item If $\rho/8<\delta_D(x)$ we use Harnack inequalities for $\mathcal{L}$ and $\tilde{\mathcal{L}}$.
	\item For $\delta_D(x) < \rho/8$ we use Boundary Harnack principles (see Lemma~\ref{BHPforL}, \cite[Theorem 2.18]{KSV2014}).
\end{itemize}

\noindent Next, suppose that $\deltaDD(y) \le \rho /4$. Here, the difficulty lies in the fact $\tGDD$ is non-symmetric.\\
In the proof of lower bounds we consider two cases: $x$ close to $y$ and $x$ far away from $y$.
\begin{itemize}
\item In the case $|x-y| \le \rho$, we locally approximate $D$ by the small $C^{1,1}$ set $F$ such that $\delta_D(x) = \delta_F(x)$ and  $\delta_D(x) = \delta_F(x)$ (see \cite[Lemma 1]{MR2892584}). Then $\tG_D(x,y) \ge \tG_F(x,y) \approx G_F(x,y) \approx G_D(x,y)$ (see Lemma \ref{GreenEst1}).\item For $|x-y| > \rho$ and $\delta_D(x) \ge \rho/4$ we use Harnack inequalities. For $\delta_D(x) \le \rho/4$ we use boundary Harnack principles.
\end{itemize}

\noindent In the next step, we prove the upper bound in (\ref{eq:egf}) for $\deltaDD(x) \ge \rho/4$.
We have already proved that for $z \in D \setminus D^r$,
$$
c_1^{-1} G_D(x,z) \le \tG_D(x,z) \le c_1 G_D(x,z)\,.
$$
By (\ref{def:kappa}), Lemma~\ref{lem:boundKappa}, Lemma~\ref{lem:GLDupper}, (\ref{eq:0}) and (\ref{eq:1}),
\begin{align}
  \tG_D(x,y)  & \le A G_D(x,y) + \Cg\int_{D_r} \frac{\tG_D(x,z)G_D(z,y)}{|y-z| \land
    \deltaDD(z)} |b(z)|\,dz\,, \label{eq:toi} \\
  & \le AG_D(x,y) + B(x)\,, \label{eq:3}
\end{align}
where $A = 1+c_1 \Cg\CIII$ and $B(x)=\eta \CXXI U(\delta_{D^r}(x))$.
Now, plugging (\ref{eq:3}) into
(\ref{eq:toi}), and using (\ref{eq:1}), (\ref{eq:2}) and induction, we get for $n=0,1,\ldots$,
\begin{equation}\label{eq:AB}
\tG_D(x,y) \le A\big(1 + \eta +\cdots + \eta^n \big)G_D(x,y) +
\eta^n B(x)\,.
\end{equation}
In consequence,
\begin{equation}\label{eq:ubi}
\tG_D(x,y) \le \frac{A}{1- \eta} G_D(x,y)\,.
\end{equation}
Finally, we prove the upper bound in (\ref{eq:egf}) when $\deltaDD(x)< \rho/4$.
\begin{itemize}
\item If $|x-y| > \rho$, we use boundary Harnack principles.
\item For $|x-y| \le \rho$, consider the same set $F$ as above. We have
\end{itemize}

$$
  \tGDD(x,y) =
   \tG_F(x,y) + \int_{D \setminus F} \tP_F(x,z) \tGDD(z,y) \,dz\,.
$$
By Lemma~\ref{Theorem1s} and (\ref{PoissonComp}), $\tG_F(x,y) \approx G_F(x,y)$ and $\tP_F(x,z)\approx P_F(x,z)$. We already know that  for $|z-y| > \rho$, $\tGDD(z,y) \approx G(z,y)$. Thus,
$$
\tGDD(x,y) \approx G_F(x,y) + \int_{D \setminus F} P_F(x,z)
 G_D(z,y) \,dz = G_D(x,y)\,.
$$
The proof of Theorem~\ref{Theorem1} is complete.
\end{proof}
%In passing we only note that
%(\ref{eq:djpt}) and (\ref{eq:egf}), and \cite{MR1654824} or \cite[Theorem 22]{MR1991120} yield sharp estimates of the Poisson kernel:
%\begin{equation}\label{eq:ePk}
%\tilde P_D(x,y)\approx P_D(x,y) \approx \delta_D(x)^{\alpha/2}\delta_{D^c}(y)^{-\alpha/2}\left[1\lor \delta_{D^c}(y)\right]^{-\alpha/2}|y-x|^{-d}\,.
%\end{equation}

%\section*{Acknowledgement}
%\acknowledgements %Tomasz Grzywny was supported by the Alexander von Humboldt Foundation.
%We thank Krzysztof Bogdan for discussions and many helpful comments.

%\begin{thebibliography}{9}                  %If there are more than 9 items use instead of \begin{thebibliography}{9}
                                            %the following 

\end{document}